\numberwithin{equation}{section}
\newcommand{\half}{{1/2}}
\newcommand{\RR}{\mathbb{R}}
 \DeclareMathOperator{\dist}{dist}
\DeclareMathOperator{\ospan}{\overline{Span}}
\renewcommand{\phi}{\varphi}
\newcommand{\vep}{\varepsilon}
\newcommand{\co}{\mathbb{C}}
\newcommand{\cp}{\mathbb{C_+}}
\newcommand{\rl}{\mathbb{R}}
\newcommand{\he}{\mathcal{H}(E)}
\renewcommand{\phi}{\varphi}
\newcommand{\A}{\mathcal{A}}
\newcommand{\T}{\mathcal{T}}
\renewcommand{\L}{\mathcal{L}}
\newcommand{\spa}{\operatorname{span}}
\newcommand{\Ker}{\operatorname{Ker}}
\newcommand{\tz}{\mathbb{T}}
\newcommand{\cm}{\mathbb{C_-}}
\newcommand{\rea}{{\rm Re}\,}
\newcommand{\ima}{{\rm Im}\,}
\newtheorem{Thm}{Theorem}[section]
\newtheorem{theorem}[Thm]{Theorem}
\newtheorem{lemma}[Thm]{Lemma}
\newtheorem{proposition}[Thm]{Proposition}
\newtheorem{remark}[Thm]{Remark}
\begin{document}
\sloppy
\title[Spectral synthesis in de Branges spaces]
{Spectral synthesis in de Branges spaces}

\author{Anton Baranov, Yurii Belov, Alexander Borichev}
\address{Anton Baranov,
\newline Department of Mathematics and Mechanics, St.~Petersburg State University, St.~Petersburg, Russia,
\newline National Research University Higher School of Economics, St.~Petersburg, Russia,
\newline {\tt anton.d.baranov@gmail.com}
\smallskip
\newline \phantom{x}\,\, Yurii Belov,
\newline Chebyshev Laboratory, St.~Petersburg State University, St. Petersburg, Russia,
\newline {\tt j\_b\_juri\_belov@mail.ru}
\smallskip
\newline \phantom{x}\,\, Alexander Borichev,
\newline I2M, Aix-Marseille Universit\'e, CNRS, Marseille, France
\newline {\tt alexander.borichev@math.cnrs.fr}
}
\thanks{A.~Baranov and Yu.~Belov were supported by Russian Science Foundation grant 14-21-00035.}

\begin{abstract}
We solve completely the spectral synthesis problem for reproducing
kernels in the de Branges spaces $\he$. Namely, we describe the de
Branges spaces $\he$ such that every complete and minimal system
of reproducing kernels $\{k_\lambda\}_{\lambda\in\Lambda}$ with complete biorthogonal
$\{g_\lambda\}_{\lambda\in\Lambda}$ admits the spectral synthesis, i.e.,  
$f \in \ospan \{(f, g_\lambda) k_\lambda:\lambda\in\Lambda\}$ for 
any $f$ in $\he$. 
Surprisingly, this property takes place only for two essentially
different classes of de Branges spaces: spaces with finite
spectral measure and spaces which are isomorphic to Fock-type
spaces of entire functions. The first class goes back to de
Branges himself, while special cases of de Branges
spaces of the second class appeared in the literature only
recently; we give a complete characterisation of this second class
in terms of the spectral data for $\he$. 
\end{abstract}

\maketitle


\section{Introduction and main results}

\subsection{De Branges spaces and systems of reproducing kernels}

The de Branges spaces of entire functions play a central r\^ole in various problems of spectral theory of second order ordinary differential operators 
and of harmonic analysis. The basic theory of these spaces was summarized by de Branges in \cite{br}. 
For more recent developments of this theory and its diverse applications see the works \cite{hv,os,mp,lag,mp1}; by no means is this list complete. The essence 
of the theory of de Branges spaces are their reproducing kernels.

Here, we study geometric properties of systems of reproducing kernels in the de Branges spaces. 
Let us recall a few notions. A system of vectors  $\{x_n\}_{n\in N}$ in a separable Hilbert space
$H$ is said to be {\it exact} if it is both {\it complete}
(i.e., $\ospan \{x_n\} = H$)
and {\it minimal}
(i.e., $\ospan \{x_n\}_{n\neq n_0} \neq H$ for any $n_0\in N$). 
Given an exact system there exists a unique {\it biorthogonal}
system $\{\tilde x_n\}_{n\in N}$ which satisfies the relation
$ (x_m, \tilde x_n) = \delta_{mn}$. Thus, to every element $x\in H$ 
we can associate its (formal) Fourier series
\begin{equation}
\label{rf}
x  \sim \sum_{n\in N}(x,\tilde x_n)x_n.
\end{equation}
This correspondence is one-to-one whenever no $x\in H\setminus\{0\}$ generates zero series, that is whenever 
the biorthogonal system $\{\tilde x_n\}_{n\in N}$ is also complete.
Such exact system  
is said to be a {\it Markushevich basis} (or {\it
$M$-basis}). A very natural property is the possibility to recover any vector $x\in H$ from its Fourier series:
$$
x\in \ospan \{(x,\tilde x_n) x_n\}.
$$
If this holds, then the system
$\{x_n\}_{n\in N}$ is called {\it a strong Markushevich basis} ({\it strong $M$-basis}), 
{\it a hereditarily complete system}
or one says that the system $\{x_n\}$ {\it admits the spectral synthesis},
see the operator theory motivation below. 
An equivalent definition of a strong $M$-basis is that
for any partition $N = N_1 \cup N_2$, $N_1 \cap N_2 =\emptyset$, of the index set $N$,
the mixed system
$$
\{x_n\}_{n\in N_1} \cup \{\tilde x_n\}_{n\in N_2}
$$
is complete in $H$. Furthermore, an $M$-basis which is not a strong
$M$-basis is called {\it a nonhereditarily complete system}.

In this paper we give a complete description of the de Branges spaces such that every $M$-basis of reproducing kernels is strong. 
There are several motivations for this problem (which essentially goes back to Nikolski):
\begin{itemize}
\begin{item}
{\it Strong $M$-bases of exponentials on an interval.}
This is a special case of the strong $M$-basis problem for reproducing kernels
since the Paley--Wiener space $\mathcal{P}W_a = \mathcal{F}L^2(-a,a)$
is a de Branges space and exponentials
correspond to reproducing kernels of $\mathcal{P}W_a$ via the Fourier
transform $\mathcal{F}$, see \cite{bbb}.
\end{item}
\begin{item}
{\it Spectral synthesis for a class of linear operators.}
Systems of reproducing kernels in de Branges spaces
appear (in an appropriate functional model) as eigenfunctions
of rank one perturbations of compact selfadjoint operators and the strong $M$-bases of
reproducing kernels correspond to the possibility of the spectral synthesis
(see \cite{bd} 
for more details).
\end{item}
\begin{item}
{\it Applications to differential operators.} Systems of
reproducing kernels correspond to eigenvectors of Schr\"odinger
operators via the Weil--Titchmarsh transform (see, e.g.,
\cite{mp}) or, more generally, of canonical systems of
differential equations.
\end{item}
\end{itemize}

A class of nonhereditarily complete systems of reproducing kernels was constructed in \cite{bbb}. 
If, on the contrary,
any exact system of reproducing kernels with the complete biorthogonal system
in a de Branges space is a strong $M$-basis we say that this de Branges space
has {\it strong $M$-basis property}.


\subsection{Description of de Branges spaces with strong $M$-basis property}

Now we state the main result of the paper. All necessary definitions from
de Branges spaces theory will be given in Section \ref{debr}. Each de Branges space
$\he$ is generated by some entire function $E$ of the Hermite--Biehler class.
On the other hand, the space $\he$ can be identified with
the space $\mathcal{H}$ of all entire functions of the form
$$
F(z) = A(z) \sum_n \frac{a_n \mu_n^{1/2}}{z-t_n}, \qquad \{a_n\}\in\ell^2,
$$
where
\begin{itemize}
\begin{item}
$T = \{t_n\}_{n\in N}$ is an increasing sequence such that $|t_n|\to\infty$,
$|n| \to\infty$, $N=\mathbb Z$ or $\mathbb Z_+$ or $\mathbb Z_-$; 
\end{item}
\begin{item}
$\mu = \sum_n \mu_n \delta_{t_n}$ is a positive measure on $\mathbb{R}$
satisfying $\sum_n \frac{\mu_n}{t_n^2+1}<\infty$;
\end{item}
\begin{item}
$A$ is an entire function with zero set $T$ (all zeros are simple)
which is real on $\rl$;
\end{item}
\begin{item}
the norm of $F$ is defined
as $\|F\|_{\mathcal{H}} = \|\{a_n\}\|_{\ell^2}$.
\end{item}
\end{itemize}
For the details of this identification see Section \ref{debr}. We will call the
pair $(T, \mu)$ the {\it spectral data} for the space $\he$. For a special
class of de Branges spaces (regular spaces) which appear in the de Branges
inverse spectral theorem, $\mu$ is the spectral measure of
 the operator associated with the canonical system.

It should be noted that this point of view on de Branges spaces was used in
\cite{BMS, BMS1}, where a more general model of spaces of entire functions
with Riesz bases of reproducing kernels was developed.

In what follows we will always assume for simplicity that $0\notin T$.
We will need the following two conditions on the sequence $T$.
\medskip
\\
{\bf Definition.} We will say that the sequence $T$ is
\begin{itemize}
\begin{item}
{\it lacunary} (or {\it Hadamard lacunary}) if
$$
\liminf_{t_n \to \infty} \frac{t_{n+1}}{t_n}>1, \qquad
\liminf_{t_n \to -\infty} \frac{|t_{n}|}{|t_{n+1}|}>1.
$$
Equivalently, this means that for some $\delta>0$
we have $d_n:= t_{n+1}-t_n \ge  \delta |t_n|$.
\end{item}
\begin{item}
{\it power separated} if there exist $c,\, N>0$ such that
$$
d_n= t_{n+1}-t_n \ge c|t_n|^{-N}.
$$
\end{item}
\end{itemize}

The absence of strong $M$-basis property was
previously known only for some special examples of de Branges
spaces (including the Paley--Wiener space). It was shown in
\cite{bbb} that if $T$ is power separated and $d_n = o(|t_n|)$,
$|n|\to\infty$, then {\it one can choose } $\mu_n$
such that the corresponding de Branges space $\he$ does not have
strong $M$-basis property. We are now in position to give a
complete characterisation of such de Branges spaces in terms of
their spectral data.

\begin{theorem}
\label{mainn}
Let $\he$ be a de Branges space with the spectral data $(T, \mu)$.
Then $\he$ has the strong $M$-basis property
if and only if one of the following conditions holds:
\smallskip

{\rm (i)} $\sum_n \mu_n < \infty$\textup;
\smallskip

{\rm (ii)} The sequence $\{t_n\}$ is lacunary and, for some $C>0$ and any $n$,
\begin{equation}
\label{mulacunary}
\sum_{|t_k|\leq|t_n|}\mu_k+t^2_n\sum_{|t_k|>|t_n|}\frac{\mu_k}{t^2_k}\le C \mu_n.
\end{equation}
\end{theorem}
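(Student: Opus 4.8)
The plan is to reduce the strong $M$-basis property to a concrete question about the spectral data $(T,\mu)$ and then treat the two sufficient conditions and the necessity separately. First I would fix an exact system of reproducing kernels $\{k_\lambda\}_{\lambda\in\Lambda}$ with complete biorthogonal $\{g_\lambda\}$ and recall that such a system has a generating function $G$ (entire, with simple zero set $\Lambda$), that the biorthogonal is given by the divided differences $g_\lambda = G/(G'(\lambda)(z-\lambda))$, and that the failure of synthesis is equivalent to the existence of a partition $\Lambda = \Lambda_1 \sqcup \Lambda_2$ together with a nonzero $h \in \he$ satisfying $h|_{\Lambda_1} = 0$ and $(h,g_\lambda)=0$ for $\lambda \in \Lambda_2$. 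Writing $G = G_1 G_2$ according to the partition, the first condition becomes $G_1 \mid h$, and I would translate the second condition, via the orthonormal basis $\phi_n = \mu_n^{1/2} A/(z-t_n)$ (for which $K_{t_n} \propto \phi_n$), into a summability/cancellation identity relating the coefficients of $h$ to those of $G$ weighted against the measure $\mu$. This turns the whole problem into asking whether such a nontrivial splitting can exist.

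For the sufficiency direction I would argue that conditions (i) and (ii) each prevent any such splitting. In case (i), when $\sum_n \mu_n < \infty$, the measure is finite, so the Cauchy-type transforms $\sum_n \mu_n/(z-t_n)$ governing the elements of $\he$ converge absolutely and the relevant bilinear pairings are controlled by a finite measure; this is the classical de Branges situation, and I would show directly that the obstruction identity cannot hold for a nonzero $h$. In case (ii), the lacunary hypothesis together with \eqref{mulacunary} provides a sharp form of diagonal dominance: the growth condition says precisely that the combined mass $\sum_{|t_k|\le|t_n|}\mu_k + t_n^2\sum_{|t_k|>|t_n|}\mu_k/t_k^2$ to the left and right of $t_n$ is absorbed by the single weight $\mu_n$, so that the Gram-type matrix of the mixed system (equivalently, the transform appearing in the obstruction identity) is boundedly invertible on $\ell^2$. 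Under such dominance any $h$ satisfying both orthogonality conditions must vanish, which matches the statement that $\he$ is isomorphic to a Fock-type space and inherits its synthesis property.

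For the necessity direction I would assume that neither (i) nor (ii) holds and construct an explicit nonhereditarily complete system, refining the construction of \cite{bbb}. Starting from the orthogonal basis $\{K_{t_n}\}$, I would perturb it into a complete minimal system $\{k_{\lambda_n}\}$ whose generating function $G$ admits a nontrivial factorization $G=G_1G_2$ carrying a nonzero obstructing vector $h$, and verify that the biorthogonal remains complete. The construction branches according to the way (ii) fails: if $T$ is not lacunary there are arbitrarily long runs of comparable points $t_n$ where the kernels are too close, and one can create the cancellation along such a run; if $T$ is lacunary but \eqref{mulacunary} fails, then for suitable $n$ either the left tail $\sum_{|t_k|\le|t_n|}\mu_k$ or the right tail $t_n^2\sum_{|t_k|>|t_n|}\mu_k/t_k^2$ is much larger than $\mu_n$, and this imbalance is exactly what lets one build a nonzero $h$ orthogonal to the mixed system.

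I expect the main obstacle to be the sufficiency in case (ii): proving that \eqref{mulacunary} is not merely sufficient but the sharp threshold requires delicate two-sided estimates showing that the left and right tails of $\mu$ around each $t_n$ are absorbed by $\mu_n$, and that this forces the boundedness and invertibility of the relevant transform on all of $\ell^2$ uniformly over partitions. The matching necessity construction in the lacunary-but-\eqref{mulacunary}-failing case is the second delicate point, since one must produce the obstruction using only the quantitative failure of the tail bound while keeping the perturbed system complete and minimal with complete biorthogonal.
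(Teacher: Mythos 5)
Your reduction step is sound and coincides with the paper's (Proposition \ref{firstred} together with Remark \ref{st14}: failure of synthesis is equivalent to a single nonzero $h$ vanishing on $\Lambda_1$ and orthogonal to $\{g_\lambda\}_{\lambda\in\Lambda_2}$, i.e.\ to the interpolation identities \eqref{main1}--\eqref{main2} with $a_n=b_n$). But your mechanism for the sufficiency of (ii) does not work. Condition \eqref{mulacunary} constrains the spectral data $(T,\mu)$, whereas the Gram-type matrix of the mixed system $\{g_\lambda\}_{\lambda\in\Lambda_1}\cup\{k_\lambda\}_{\lambda\in\Lambda_2}$ is built from an \emph{arbitrary} exact sequence $\Lambda$, which need not be near $T$ in any quantitative sense; no diagonal dominance of that matrix can be extracted from \eqref{mulacunary}, and bounded invertibility on $\ell^2$ uniformly over partitions would make every mixed system a Riesz-type sequence --- far stronger than completeness and false for general exact systems of reproducing kernels even in these spaces. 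The paper argues instead at the level of the interpolation identities: one first kills the entire function $R$ in \eqref{product1} (in case (i) by Krein's theorem on zero exponential type plus asymptotics along $i\mathbb{R}$, after disposing of the degenerate case $c=0$ in \eqref{c1}, where the biorthogonal system is orthogonal to an explicit vector $B_0$ and hence incomplete; in case (ii) by boundedness of the phase derivative and the stability of the zeros of $S_2$ across different Clark data); one then shows the two Cauchy transforms $\sum_n a_n\mu_n^{1/2}/(z-t_n)$ and $\sum_n|a_n|^2/(z-t_n)$ share infinitely many \emph{real} zeros $s_n\in(t_{n-1},t_n)$, and finally \eqref{mulacunary} forces $|a_n|\gtrsim 1$ along a subsequence, contradicting $\{a_n\}\in\ell^2$. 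Your case (i) sketch (``pairings controlled by a finite measure'') likewise omits the actual content: the $c=0$ dichotomy, $R\equiv 0$, and the order comparison $o(|y|^{-1})$ versus $\asymp|y|^{-1}$ along $iy$.

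On the necessity side your branching (``$T$ not lacunary'' versus ``lacunary but \eqref{mulacunary} fails'') misses an entire regime: $\inf_n\mu_n=0$ with $\sum_n\mu_n=\infty$, which can occur with $T$ lacunary and is handled in the paper by a completely different device --- restrict to a subsequence with $\sum_k\mu_{n_k}<\infty$, produce in the smaller space an exact system with incomplete biorthogonal (Proposition \ref{bior}), and lift to $\he$ via Proposition \ref{submu}; no ``run of close points'' or ``tail imbalance'' construction applies there. In the remaining cases, the two genuinely hard steps are absent from your sketch. First, the obstructing vector is produced by a fixed-point construction forcing \emph{common zeros} $s_k$ of the two Cauchy transforms with the polynomial separation $\dist(s_k,T)\gtrsim|s_k|^{-N}$; this separation requirement is what splits the analysis into the paper's cases (II) and (III) (polynomially versus super-polynomially small gaps $d_{n_k}$ --- in the latter the zeros must be placed outside the gap, the coefficients $a_n$ change sign, and even the lower bound $|h(iy)|\gtrsim|y|^{-1}|A(iy)|$ becomes a nontrivial estimate). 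Second, you flag ``verify that the biorthogonal remains complete'' but give no mechanism; in this problem that verification is exactly where the work lies (Proposition \ref{cod}): one must choose the perturbed zeros $\tilde s_k$ against the quantity $\beta(y)=\bigl(\sum_n\mu_n/(t_n^2+y^2)\bigr)^{1/2}$, prove $G\notin\he$ yet $|G(iy)|\gtrsim|y|^{-N}|A(iy)|$, and then run a Krein-theorem argument in arbitrary Clark data $(U,\nu)$ with $\nu(\mathbb{R})=\infty$ to exclude a nonzero polynomial obstruction. Without these ingredients the construction does not yield an $M$-basis, only an exact system, and the theorem is not proved.
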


Thus, there exist two distinct classes of de Branges
spaces with strong $M$-basis property. It seems that there are deep reasons
for this property which are essentially different in these two cases:

\begin{itemize}
\begin{item}
For the case $\sum_n \mu_n<\infty$ there exists an operator theory
explanation. Passing to the model of rank one perturbations of
selfadjoint  operators \cite{bd} we find ourselves in the case of {\it weak
perturbations} in the sense of Macaev. It is known
that this class of perturbations is more regular than the general
rank one perturbations.
\end{item}

\begin{item}
Perturbations of the form \eqref{mulacunary} are, on contrary,
large, but the spectrum is lacunary. It turns out that in this
case de Branges space coincides (as a set with equivalence of
norms) with a Fock-type space.
\end{item}
\end{itemize}

Note also that in the case (ii) any exact system of reproducing
kernels in $\he$ is automatically an $M$-basis (see \cite[Theorem 1.2]{bb}),
whereas in the case (i) there always exist exact systems of reproducing
kernels with incomplete biorthogonal system
(see \cite[Theorem 1.1]{bb} or  Proposition \ref{bior}).


\subsection{De Branges spaces with strong $M$-basis property and Fock-type spaces}

Let $\phi: [0, \infty) \to (0, \infty)$ be a measurable function. With each $\phi$ we associate a {\it radial Fock-type space}
(or a {\it Bargmann--Fock space})
$$
\mathcal{F}_\varphi = \Big\{F\ \text{entire}\,:
\|F\|_{\mathcal{F}_\varphi}^2 := \int_\co |f(z)|^2e^{-\phi(|z|)} dm(z) <\infty
\Big\}.
$$
Here $m$ stands for the area Lebesgue measure. The classical Fock space corresponds to $\phi(r)=\pi r^2$.

It is known that some Fock-type spaces with slowly growing $\phi$
(e.g., $\phi(r) = (\log r)^\gamma$, $\gamma \in (1,2]$)
have Riesz bases of reproducing kernels corresponding to real points and, thus,
can be realized as de Branges spaces with equivalence of norms \cite{bl} (whereas the classical
Fock space has no Riesz bases of reproducing kernels).
Surprisingly, it turns out that the class of de Branges spaces which can be realized
as Fock-type spaces coincides exactly with the class of de Branges spaces
with strong $M$-basis property from Theorem \ref{mainn}, (ii).

\begin{theorem}
\label{fock1}
Let $\he$ be a de Branges space with the spectral data $(T, \mu)$.
Then the following conditions are equivalent\textup:
\begin{enumerate}
\begin{item}
There exists a Fock-type space $\mathcal{F}_\varphi$ such that
$\he=\mathcal{F}_\varphi$\textup;
\end{item}
\begin{item}
The operator $R_\theta: f(z)\mapsto f(e^{i\theta}z)$
is a bounded invertible operator in $\he$ for all \textup(some\textup)
$\theta\in(0,\pi)$\textup;
\end{item}
\begin{item}
The sequence $T$ is lacunary and condition \eqref{mulacunary} holds.
\end{item}
\end{enumerate}
\end{theorem}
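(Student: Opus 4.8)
The plan is to prove the cycle of equivalences (i) $\Rightarrow$ (ii) $\Rightarrow$ (iii) $\Rightarrow$ (i), exploiting the concrete model $\he = \mathcal H$ described in the excerpt, where elements are written as $F(z) = A(z)\sum_n a_n \mu_n^{1/2}/(z-t_n)$ with $\|F\|_{\mathcal H} = \|\{a_n\}\|_{\ell^2}$. The reproducing kernels are (up to normalisation) the functions $k_{t_n}(z) = A(z)\mu_n^{1/2}/(z-t_n)$, so the map $F \mapsto \{a_n\}$ is a unitary identification of $\he$ with $\ell^2(N)$, and the action of any operator on $\he$ can be read off through its matrix in the coordinates $\{a_n\}$.

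\medskip

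For (i) $\Rightarrow$ (ii), I would compute the action of the rotation operator $R_\theta$ in the $\ell^2$ coordinates. Fixing $\theta\in(0,\pi)$, the image $R_\theta F(z) = F(e^{i\theta}z)$ must again lie in $\he$, which forces one to expand $F(e^{i\theta}z)$ back in the basis $\{k_{t_n}\}$; the resulting matrix entries are governed by the ratios of $A(e^{i\theta}z)$ to $A(z)$ evaluated near the points $t_n$ and by the quantities $\mu_k^{1/2}/\mu_n^{1/2}$ together with the geometric factors $1/(e^{i\theta}t_k - t_n)$. The Schur test then reduces boundedness and invertibility of $R_\theta$ to a pair of summability estimates on these entries. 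The lacunarity of $T$ controls the off-diagonal decay of the geometric factors $(e^{i\theta}t_k - t_n)^{-1}$, while condition \eqref{mulacunary} is exactly what is needed to dominate the weighted row and column sums by a constant; conversely, the invertibility of $R_\theta$ forces both $T$ to be lacunary (otherwise the kernels $k_{t_n}$ and $k_{t_{n+1}}$ become too close and the rotated system degenerates) and the weight comparison \eqref{mulacunary} to hold. Thus (ii) and (iii) are equivalent via this matrix analysis.

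\medskip

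For the bridge to Fock-type spaces, I would establish (iii) $\Rightarrow$ (i) by constructing $\phi$ explicitly. Under the lacunarity hypothesis one builds a radial weight $\phi(r)$ whose growth matches $\log|A(re^{i\psi})|^2$ in an averaged sense, so that the integral $\int_\co |F(z)|^2 e^{-\phi(|z|)}\,dm(z)$ becomes comparable to the discrete sum $\sum_n |a_n|^2$. The key computation is to show that, for $F = A\sum_n a_n\mu_n^{1/2}/(z-t_n)$, the area integral localises near the points $t_n$ and reproduces, up to uniformly bounded constants, the coefficient norm $\|\{a_n\}\|_{\ell^2}^2$; here the off-diagonal contributions are controlled precisely by lacunarity and \eqref{mulacunary}, the same two conditions already isolated. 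For the converse (i) $\Rightarrow$ (iii), I would observe that any radial Fock-type space is manifestly invariant under all rotations $R_\theta$ and that $R_\theta$ is isometric on $\mathcal F_\varphi$, so $\he = \mathcal F_\varphi$ immediately yields (ii), and then one invokes the already-proved equivalence (ii) $\Leftrightarrow$ (iii).

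\medskip

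The main obstacle I anticipate is the quantitative matching between the continuous weight $\phi$ and the discrete spectral data in the step (iii) $\Rightarrow$ (i): one must produce a single radial $\phi$ that simultaneously captures the growth of the canonical product $A$ and calibrates the masses $\mu_n$ so that the area norm is two-sided comparable to the $\ell^2$ coefficient norm. This requires careful control of $|A|$ between consecutive zeros $t_n$ — essentially a precise form of the estimate $\log|A(z)| \sim \sum_k \log|1 - z/t_k|$ under lacunarity — together with a localisation argument showing that each coefficient $a_n$ contributes to the integral essentially only over an annulus around $|z| = |t_n|$. The summability bound \eqref{mulacunary} is what guarantees that the cross terms do not accumulate, and verifying this rigorously — rather than the rotation boundedness, which is a more routine Schur-test computation — is where the real work lies.
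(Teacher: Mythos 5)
Your global architecture coincides with the paper's: (i)$\Rightarrow$(ii) is immediate from rotation invariance of radial Fock-type spaces, and your (iii)$\Rightarrow$(i) sketch is essentially the published argument, which sets $\varphi(r)=2\log|A(ir)|+\log\mu_n$ on annuli $[(1-\varepsilon)t_n,(1+\varepsilon)t_n]$ and verifies the two-sided Carleson estimates exactly as you anticipate. (One omission there: two-sided norm comparability on $\he$ only identifies $\he$ with a \emph{closed subspace} of $\mathcal{F}_\varphi$; the paper closes the set-equality gap by noting that polynomials are dense in $\mathcal{F}_\varphi$ and belong to $\he$.) The genuine gap is in your (ii)$\Rightarrow$(iii). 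The mechanism you propose for the necessity of lacunarity is wrong: the kernels $k_{t_n}$ at the real points never ``become too close'' --- $\{k_{t_n}\}_{t_n\in T}$ is an \emph{orthogonal} basis for every choice of the masses $\mu_n$, no matter how small the gaps $t_{n+1}-t_n$ are. What degenerates is the \emph{rotated} family: invertibility of $R_{\pi/2}$ makes $\{k_{it_n}\}$ a Riesz basis of reproducing kernels, and for points lying on the imaginary axis this forces the Carleson condition, i.e., lacunarity of $\{|t_n|\}$. In the paper this is a nontrivial citation (\cite[Lemma D.4.4.2]{nik}), not something that falls out of a matrix computation.

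Moreover, a Schur test is structurally incapable of giving the necessity direction: it furnishes \emph{sufficient} conditions for boundedness, and failure of row/column summability bounds does not imply that $R_\theta$ is unbounded or non-invertible, so your claimed equivalence of (ii) and (iii) ``via matrix analysis'' does not deliver (ii)$\Rightarrow$(iii). The paper instead tests the two-sided estimate $\|R_{\pi/2}F\|\asymp\|F\|$ on the orthogonal basis vectors $F=A(z)/(z-t_n)$ themselves, which yields the exact identity \eqref{Fock}; combined with the product estimate $|A(it_k)/A'(t_k)|\gtrsim |t_k|$ (which uses lacunarity) this gives $\mu_n^{-1}\gtrsim \sum_{|t_k|>|t_n|}\mu_k^{-1}+|t_n|^{-2}\sum_{|t_k|\le|t_n|}t_k^2\mu_k^{-1}$. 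Note that this inequality involves the reciprocals $1/\mu_k$ and is \emph{not} condition \eqref{mulacunary}; the missing idea in your sketch is the self-improvement step, \cite[Lemma 5.9]{BMS}, which upgrades ``a sum dominated by its extreme term'' to geometric decay, namely $\mu_n\le C2^{-\delta(m-n)}\mu_m$ and $\mu_m t_m^{-2}\le C2^{-\delta(m-n)}\mu_n t_n^{-2}$ for $m>n$, and it is the summation of these geometric series that produces \eqref{mulacunary}. Without this bootstrapping lemma (or an equivalent substitute), the row/column bounds you envisage do not yield \eqref{mulacunary}, and the cycle of implications does not close.
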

\medskip

Let us also mention here an important characteristic property of de Branges spaces: each Hilbert space of entire functions possessing two orthogonal bases of reproducing kernels is a de Branges space \cite{BMS1}. 


\subsection{Nonhereditarily complete systems with infinite defect}

Assume that for some de Branges space $\he$ strong
$M$-basis property fails. In view of the applications to the
spectral synthesis of linear operators, it is an important problem
to determine whether the codimension of every mixed system of
reproducing kernels and their biorthogonals is finite or
infinite.

Let $\{k_\lambda\}_{\lambda\in \Lambda}$ be an exact system of
reproducing kernels with a complete biorthogonal system
$\{g_\lambda\}_{\lambda\in \Lambda}$ (see Subsection \ref{reduc}
for the definition of $g_\lambda$). For a partition $\Lambda =
\Lambda_1\cup \Lambda_2$, define the defect of the corresponding
mixed system as
$$
{\rm def}\,(\Lambda_1, \Lambda_2):=
{\rm dim}\,(\{g_\lambda\}_{\lambda\in \Lambda_1} \cup
\{k_\lambda\}_{\lambda\in \Lambda_2})^{\perp}.
$$
We also put
$$
{\rm def}(\Lambda)=\sup\{{\rm def}(\Lambda_1, \Lambda_2):\,
\Lambda = \Lambda_1\cup \Lambda_2\},
$$
$$
{\rm def}\big(\he\big)=\sup\{{\rm def}(\Lambda):
\{k_\lambda\}_{\lambda\in\Lambda} \text{ is }M\text{-basis}\}.
$$
It turns out that one can construct examples of $M$-bases of reproducing kernels
with large or even infinite defect.

\begin{theorem}
\label{mdim}
Let $\he$ be a de Branges space with the spectral data $(T, \mu)$,  \smallskip
$\sum_n \mu_n =\infty$.

\textup{1}. If for some $N\in\mathbb{N}$ there exists a subsequence $t_{n_k}$ of $T$
such that $\sum_k t_{n_k}^{2N-2} \mu_{n_k}<\infty$, then ${\rm def}(\he)\geq N$ \textup{(}moreover, there exist an $M$-basis
of reproducing kernels $\{k_\lambda\}_{\lambda\in \Lambda}$
in $\he$ such that ${\rm def}\,(\Lambda_1, \Lambda_2) = N$
for some partition $\Lambda = \Lambda_1\cup \Lambda_2$\textup{)}.
\smallskip

\textup{2}. Let $T$ be a power separated sequence. Then the following are equivalent:

\textup{(i)} ${\rm def}\big(\he\big) =\infty$\textup{;}


\textup{(ii)} $\inf_{n} \mu_n |t_n|^N =0$  for any $N>0$.
\end{theorem}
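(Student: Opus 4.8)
The plan is to treat the two assertions separately but to base both on a single structural description of the orthogonal complement of a mixed system. Recall from the identification in Section \ref{debr} that $\|f\|_\he^2=\sum_n |f(t_n)|^2/(|A'(t_n)|^2\mu_n)$, so the reproducing kernels at the points of $T$ form an \emph{orthogonal} basis with $k_{t_n}(z)=A'(t_n)\mu_n A(z)/(z-t_n)$. Any $M$-basis of reproducing kernels $\{k_\lambda\}_{\lambda\in\Lambda}$ is generated by an entire function $G$ with zero set $\Lambda$, the biorthogonal being $g_\lambda(z)=G(z)/(G'(\lambda)(z-\lambda))$. For a partition $\Lambda=\Lambda_1\cup\Lambda_2$ I would write the formal factorisation $G=G_1G_2$ over the two parts and show, by a residue and Cauchy-transform computation, that $f\in(\{g_\lambda\}_{\lambda\in\Lambda_1}\cup\{k_\lambda\}_{\lambda\in\Lambda_2})^{\perp}$ precisely when $f=h\cdot\Phi$, where $\Phi$ is a fixed meromorphic ratio built from $A$, $G_1$, $G_2$, and $h$ ranges over the entire functions (in fact polynomials) for which $h\Phi\in\he$. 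The defect of the partition is then the dimension of the space of admissible $h$.

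The arithmetic conditions enter through moment functionals. For $f$ supported on the subsequence, $f(z)=A(z)\sum_k \alpha_k\mu_{n_k}^{1/2}/(z-t_{n_k})$ with $\{\alpha_k\}\in\ell^2$, and the expansion of $f/A$ at infinity produces the moments $M_j(f)=\sum_k \alpha_k\mu_{n_k}^{1/2}t_{n_k}^{j}$. By Cauchy--Schwarz, $M_j$ is a bounded functional on this subspace exactly when $\sum_k t_{n_k}^{2j}\mu_{n_k}<\infty$, and since $|t_n|\to\infty$ the hypothesis $\sum_k t_{n_k}^{2N-2}\mu_{n_k}<\infty$ of Part~1 secures this for every $j\le N-1$. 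Thus admitting $h(z)=z^{j}$, $0\le j\le N-1$, is permitted precisely by the moment condition. For Part~1 I would build $G$ as a perturbation of $A$ localised at $\{t_{n_k}\}$, engineered so that for a suitable partition the admissible $h$ are exactly the polynomials of degree $<N$. Membership $z^{j}\Phi\in\he$ then follows from $\sum_k t_{n_k}^{2N-2}\mu_{n_k}<\infty$, yielding $N$ complement functions whose linear independence is a Vandermonde argument on the moments, so $\mathrm{def}(\Lambda_1,\Lambda_2)\ge N$. I would then check that the perturbation is small enough that $\{k_\lambda\}$ is complete and minimal with complete biorthogonal, hence a genuine $M$-basis, and that no $h$ of degree $\ge N$ is admissible, pinning the defect at exactly $N$.

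Part~2 then splits cleanly. The implication (ii)$\Rightarrow$(i) reduces to Part~1: if $\inf_n\mu_n|t_n|^{M}=0$ for every $M$, then for each $N$ one extracts a sparse subsequence along which $\mu_{n_k}t_{n_k}^{2N-2}$ is summable, producing a mixed system of defect $\ge N$, and letting $N\to\infty$ gives $\mathrm{def}(\he)=\infty$. For (i)$\Rightarrow$(ii) I argue by contraposition: assuming $\mu_n\ge c|t_n|^{-N_0}$ for some $N_0$, I use the structural description to bound the defect of \emph{every} $M$-basis and partition uniformly. Here power-separation of $T$ is used to control $G$ and the perturbed points $\lambda_n$ relative to $t_n$, while the pointwise lower bound $\mu_n\gtrsim|t_n|^{-N_0}$ forces every admissible $h$ to be a polynomial of degree bounded in terms of $N_0$; as the number of such polynomials is finite, $\mathrm{def}(\he)<\infty$.

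The main obstacle is the implication (i)$\Rightarrow$(ii): one needs the structural description of the orthogonal complement to hold for an \emph{arbitrary} $M$-basis, not merely for the examples of Part~1, and then to convert the pointwise lower bound on $\mu_n$ together with power-separation into a uniform degree bound on the admissible entire functions $h$. Controlling the unknown generating function $G$ of a general $M$-basis — in particular ruling out admissible $h$ of high degree that could arise from strong oscillation of $G$ relative to $A$ — is the delicate point, and is exactly where power-separation is indispensable. A secondary difficulty, in Part~1, is verifying completeness of the biorthogonal system of the constructed perturbation so that the example is a bona fide $M$-basis.
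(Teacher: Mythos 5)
Your skeleton matches the paper's in several respects: the moment functionals (the paper's $\phi_l(f)=\sum_n c_n t_n^l\mu_n^{1/2}$, bounded exactly because $\sum_k t_{n_k}^{2N-2}\mu_{n_k}<\infty$), the parametrisation of the annihilator of a mixed system by entire functions satisfying the interpolation equations \eqref{main1}--\eqref{main2}, the use of a Krein-type growth argument to force those entire functions to be polynomials of degree $<N$ (this is precisely Proposition \ref{bior}), and the reduction of (ii)$\Rightarrow$(i) in Part~2 to Part~1 via $|t_n|\gtrsim|n|^\rho$ for power separated $T$. But in Part~1 your organisation diverges in a way that leaves real work undone: the paper does \emph{not} perturb $A$ inside $\he$; it builds the defect-$N$ example in the auxiliary space $\mathcal{H}(E^\circ)$ with spectral data $(T^\circ,\mu|_{T^\circ})$ --- where the moment conditions hold for the \emph{whole} measure, not just on a subspace --- and then lifts it to $\he$ by Proposition \ref{submu}, with Remark \ref{imp1} giving a one-to-one correspondence of annihilating vectors, hence defect exactly $N$ after lifting. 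The two issues you defer (that the lifted system is a bona fide $M$-basis, and that no admissible $h$ of degree $\ge N$ survives) are exactly what this machinery supplies: completeness of the biorthogonal system in $\he$ is proved in Proposition \ref{submu} using the growth bound $|G^\circ(iy)|\gtrsim|y|^{-N}|A^\circ(iy)|$, Krein's theorem, and the fact that all Clark measures of $\he$ are infinite when $\sum_n\mu_n=\infty$. Your outline asserts these verifications can be done but supplies no mechanism, and working directly in $\he$ (where the moment functionals are unbounded on the full space) makes them strictly harder than in the paper's two-step route.

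The genuine gap is the implication (i)$\Rightarrow$(ii) of Part~2. Your contrapositive plan --- that $\mu_n\gtrsim|t_n|^{-N_0}$ together with power separation forces every admissible $h$, for \emph{every} $M$-basis and \emph{every} partition, to be a polynomial of uniformly bounded degree --- is not an argument but a restatement of the theorem to be proved, and you yourself flag it as the ``main obstacle'' without resolving it. The paper does not prove this from scratch either: it observes via \eqref{use} that the pointwise lower bound on $\mu_n$ makes $\he$ of tempered growth and then invokes \cite[Theorem 5.3]{bbb}, which provides a uniform bound $M=M(N)$ on ${\rm def}(\Lambda_1,\Lambda_2)$ over all exact systems of reproducing kernels and all partitions in such spaces. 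That uniform bound is a substantial external result; nothing in your proposal controls the generating function $G$ of an \emph{arbitrary} $M$-basis (as opposed to the constructed examples of Part~1), so without importing a result of this type the hardest implication of the statement remains unproved.
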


This theorem gives, for a wide class of spectral data,
a necessary and sufficient condition for the existence
of $M$-bases of reproducing kernels with  arbitrarily large defects.
Its proof does not provide, however, an example of an $M$-basis
with infinite defect ${\rm def}(\Lambda_1, \Lambda_2)$.

\begin{theorem}
\label{indim}
For any increasing sequence $T =\{t_n\}$ with $|t_n| \to \infty$, $|n| \to \infty$,
there exists a measure $\mu$ such that 
the de Branges space with the spectral
data $(T, \mu)$
contains an $M$-basis
of reproducing kernels $\{k_\lambda\}_{\lambda\in \Lambda}$
such that ${\rm def}\,(\Lambda_1, \Lambda_2) = \infty$
for some partition $\Lambda = \Lambda_1\cup \Lambda_2$.
\end{theorem}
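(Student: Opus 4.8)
The plan is to work in the functional model $\he \cong \mathcal H$ and to produce, for a suitably chosen measure $\mu$, an explicit infinite-dimensional space of candidate defect functions together with an $M$-basis and a single partition realising them. First I would extract from $T$ a lacunary subsequence $S = \{t_{n_k}\}$, which is possible since $|t_n| \to \infty$, and choose $\mu$ so that $\mu_n$ decays faster than any power of $|t_n|^{-1}$ along $S$, while on $T \setminus S$ the values $\mu_n$ are taken small enough that $\sum_n \mu_n/(t_n^2+1) < \infty$. The point of the rapid decay is that then $\sum_{n \in S} |t_n|^{2d}\mu_n < \infty$ for every $d$, so that for every polynomial $q$ the function
$$
h_q(z) = A(z) \sum_{n \in S} \frac{q(t_n)\mu_n}{z - t_n}
$$
belongs to $\he$, its coefficient sequence lying in $\ell^2$. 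The functions $\{h_q : q \in \CC[z]\}$ span an infinite-dimensional subspace $V \subset \he$, each $h_q$ vanishing on $T \setminus S$; these are my candidate defect functions.

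Next I would build the $M$-basis by perturbing the points of $S$ and choose the partition $\Lambda = \Lambda_1 \cup \Lambda_2$ so that $V$ is contained in the orthogonal complement of $\{g_\lambda\}_{\lambda \in \Lambda_1} \cup \{k_\lambda\}_{\lambda \in \Lambda_2}$. Taking $\Lambda_2 \supseteq T \setminus S$ guarantees that every $h_q$ is orthogonal to $\{k_\lambda\}_{\lambda \in \Lambda_2}$, since $h_q$ vanishes there. The remaining requirement is that each biorthogonal $g_\lambda$ with $\lambda \in \Lambda_1$ be orthogonal to all of $V$; writing out the pairing in the model, this reduces to the vanishing of \emph{all} moments $\sum_{n \in S} w_n(\lambda)\, t_n^{m}$, $m \ge 0$, of the sequence $w_n(\lambda) = g_\lambda(t_n)/A'(t_n)$.

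The main obstacle is exactly here: a single rank-one Cauchy-transform perturbation cannot meet these conditions, because for such a perturbation the moment requirements collapse to the vanishing of all moments of one fixed sequence and hence force the perturbation to be trivial, yielding only finite defect. I would circumvent this by a multi-scale construction: split $S$ into finite blocks placed at lacunary scales $R_i \to \infty$ with block sizes tending to infinity, and on each block install a finite perturbation designed so that a growing number of its moments vanish. Because the scales are widely separated, each biorthogonal $g_\lambda$ localises near a single block, so the moment conditions decouple block by block and each finite block can be arranged exactly. The residual cross-scale interactions are small by lacunarity and by the decay of $\mu$ along $S$, so I would upgrade the naive block construction to an exact one by a convergent successive-approximation argument.

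The final step, and the technical heart of the proof, is to verify that the resulting system $\{k_\lambda\}_{\lambda \in \Lambda}$ is a genuine $M$-basis: completeness, meaning that no nonzero $f \in \he$ vanishes on all of $\Lambda$; minimality, i.e. the existence of the biorthogonals $g_\lambda$, equivalent to the appropriate divisibility of the generating function; and completeness of $\{g_\lambda\}$. Each of these reduces to a statement about the generating function and the interpolation properties of $S$, which I would control using the lacunarity of the scales together with the rapid decay of $\mu$ along $S$. Once this is in place, the infinitely many independent functions $h_q$ all lie in $(\{g_\lambda\}_{\lambda \in \Lambda_1} \cup \{k_\lambda\}_{\lambda \in \Lambda_2})^{\perp}$, giving ${\rm def}\,(\Lambda_1,\Lambda_2) = \infty$.
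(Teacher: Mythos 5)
Your skeleton coincides with the paper's at the level of the defect mechanism (a sparse subsequence carrying super-polynomially small masses, moment functionals as defect vectors, kernels at the remaining spectrum placed in $\Lambda_2$), but the step where you actually produce $\Lambda_1$ is a genuine gap. The conditions you need are exact equalities $\sum_{n\in S} w_n t_n^m = 0$ for \emph{all} $m\ge 0$, and these do not decouple block by block: for the condition of order $m$ the dominant contribution shifts to different blocks as $m$ grows, so every high-order moment couples all scales, and your successive-approximation scheme has no uniform small parameter across the infinitely many orders $m$ — nothing in the proposal shows it converges to an exact solution rather than to approximately vanishing moments. The paper sidesteps this entirely by reversing the order of choices: it first picks a lacunary canonical product $D$ with $n_D(r)=o(n_{T^0}(r))$ and super-polynomial smallness of $D$ relative to $A^0$ both at the points of $T^0$ and on $i\RR$, sets $G^0(z)=zD(z)$, and only then defines $\mu_n = n^2|D(t_n)|^2/|(A^0)'(t_n)|^2$. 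The exact identity $A^0(z)\sum_n t_nD(t_n)/\bigl((A^0)'(t_n)(z-t_n)\bigr)=zD(z)$ then makes the super-polynomial decay of $G^0/A^0$ along $i\RR$ — equivalently, the vanishing of all moment functionals $\psi_l$ on every $g_\lambda$ — automatic, with no perturbation, no fixed point argument, and no point of $S$ moved.

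The more serious missing idea is the completeness of the biorthogonal family, without which you have no $M$-basis and the defect statement is vacuous; your plan to control it by ``lacunarity of the scales together with the rapid decay of $\mu$ along $S$'' would fail. The very mechanism creating infinite defect forces $|G(iy)|/|A(iy)|$ to decay faster than any power, so the Krein-type argument of Proposition \ref{submu} (which shows the hypothetical annihilator $V$ in \eqref{bor2} is a polynomial, hence zero) breaks down: one only gets $|V(z)|\lesssim (|z|+1)^N|W(z)|/|\ima z|^M$ with $W$ an unbounded zero-type majorant of $A^0/G^0$. The paper needs a dedicated device with no counterpart in your construction: a second lacunary set $T^1$, much denser than $T^0$ in the sense $n_{T^0}(r)=o(n_{T^1}(r))$, carrying \emph{unit} masses, which guarantees for an alternative spectral data $(U,\nu)$ that $\nu([t_n-2,t_n+2])\gtrsim 1$ for $t_n\in T^1$, hence $V$ is bounded at $\gtrsim n_{T^1}(r)$ points; combined with a Jensen-formula estimate giving $n_V(r)=o(n_{T^1}(r))$ and the new Lemma \ref{newl} (an entire function of zero exponential type, bounded on a lacunary sequence whose counting function dominates its zero counting function, is constant), this forces $V\equiv 0$. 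In your version the masses on $T\setminus S$ are chosen merely ``small enough'' for $\sum_n\mu_n/(t_n^2+1)<\infty$, which gives no lower bound on the Clark masses near those points — and if they are small enough that $\sum_n\mu_n<\infty$, the space falls into case (i) of Theorem \ref{mainn} and has the strong $M$-basis property, so your construction would necessarily fail there. The same Lemma \ref{newl} is also what proves completeness of $\{k_\lambda\}$ itself (no entire $H$ with $HG^0\in\mathcal{H}(E^0)$, via $H(t_n)\to 0$ and $n_H(r)=o(n_{T^0}(r))$), another verification your proposal leaves to unspecified ``interpolation properties of $S$''.
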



\subsection{Spectral theory of rank one perturbations of selfadjoint operators}
\label{spect}
A continuous operator $\L$ in a Banach (or a Fr\'echet) space is said to 
admit the {\it spectral synthesis} (in the sense of Wermer) if
any $\L$-invariant closed linear subspace $M$ 
is spanned by the root vectors it contains.
Wermer \cite{wer} showed that any compact normal 
operator admits the spectral synthesis. 
Neither normality nor compactness alone is sufficient.
The first example of a compact operator $\L$ such that both 
$\L$ and $\L^*$ have complete sets of eigenvectors, but $\L$ admits no spectral synthesis 
was given by Hamburger \cite{hamb}, who constructed
a compact operator with a complete set of eigenvectors, whose restriction
to an invariant subspace is a nonzero Volterra operator.
Further examples of operators which do not admit the spectral synthesis
were obtained by Nikolski \cite{nik69} and Markus \cite{markus}. 
In particular, it was shown in \cite[Theorem 4.1]{markus} that
a compact operator admits the spectral synthesis if and only if
its root vectors form a strong $M$-basis.

In \cite{bd} a functional model is constructed for rank one perturbations
of compact self-adjoint operators (see \cite{bd} for an extensive survey of similar models).
Let $\A$ be a compact self-adjoint operator in a Hilbert space $H$ such that $\Ker \A = 0$
and the spectrum of $\A$ is simple. For $a, b\in H$, let $\L = \A + a\otimes b$, 
where $(a\otimes b)x = (x, b)a$.  Any such perturbation is unitarily equivalent to
a certain model operator acting in a de Branges space $\he$. The model operator 
(unbounded, defined on some appropriate domain which is dense in $\he$) is given by 
$$
\T F = zF - c_F G,
$$ 
where $G$ is an entire function such that $G \notin \he$, but $G/(\cdot-\lambda) \in \he$ when 
$G(\lambda) = 0$. Here $c_F$ is some constant depending on $F$ (see \cite[Theorem 4.4]{bd}
or \cite[Section 4]{bbby} for details). Conversely, any pair $(E, G)$ as above corresponds 
to some rank one perturbation of a compact self-adjoint operator
whose spectrum is the zero set of the function $E+E^*$, where $E^*(z) = \overline{E(\overline z)}$. 
Clearly, eigenvectors
of $\T$ are of the form $g_\lambda = G/(\cdot-\lambda)$, where $G(\lambda) = 0$, i.e., 
they are  elements  
of the system biorthogonal to a system of reproducing kernels in $\he$
(the corresponding reproducing kernels are eigenvectors of the adjoint operator $\T^*$).  

Combining Theorems \ref{mdim} and \ref{indim} with the above model
we obtain a series of striking examples: any compact self-adjoint operator can be turned 
by a rank one perturbation into an operator for which the spectral synthesis
fails up to a finite or even an infinite defect. Denote by $\mathcal{E}(\L)$
the set of eigenvetors of the operator $\L$.

\begin{theorem}
\label{sps}
For any compact self-adjoint operator $\A$ \textup(in some Hilbert space $H$\textup) 
with simple spectrum
and trivial kernel there exists a rank one perturbation $\L$ of $\A$ with real spectrum
such that both $\mathcal{E}(\L)$ and $\mathcal{E}(\L^*)$ are complete in $H$, 
but $\L$ does not admit the spectral synthesis. 
Moreover, for any $N\in\mathbb{N} \cup \{\infty\}$, the rank one perturbation may be chosen so that
for some $\L$-invariant subspace $M$ we have
$$
{\rm dim}\,\big(M\ominus \spa \{\mathcal{E}(\L)\cap M\}\big) = N.
$$
\end{theorem}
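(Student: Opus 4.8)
The plan is to deduce Theorem \ref{sps} from Theorems \ref{mdim} and \ref{indim} by transporting those statements through the functional model of \cite{bd} recalled in Subsection \ref{spect}. All the properties at stake---completeness of $\mathcal{E}(\L)$ and $\mathcal{E}(\L^*)$, failure of the spectral synthesis, and the codimension $\dim\big(M\ominus\ospan\{\mathcal{E}(\L)\cap M\}\big)$---are invariant under unitary equivalence, and a compact self-adjoint operator with simple spectrum is determined up to unitary equivalence by its eigenvalues. Hence it suffices to construct, for the given spectrum $\{s_n\}$ of $\A$ (distinct nonzero reals accumulating only at $0$), a rank one perturbation of \emph{some} operator with that spectrum and then conjugate by the intertwining unitary. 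In the model a pair $(E,G)$ yields a rank one perturbation of a compact self-adjoint operator whose spectrum is the zero set of $E+E^*=2A$, that is the sequence $T$ of zeros of $A$; since the model operator $\T\colon F\mapsto zF-c_FG$ is, up to inversion, unitarily equivalent to the perturbed operator, and the compact picture accumulates at $0$ whereas $T$ accumulates at $\infty$, I would take $T=\{1/s_n\}$, an arbitrary increasing sequence with $|t_n|\to\infty$ and $0\notin T$. The points $\Lambda$ (the zeros of $G$) are chosen real so that $\sigma(\L)\subset\RR$, and the associated reproducing kernels $k_\lambda$ and biorthogonals $g_\lambda=G/(\cdot-\lambda)$ are the eigenvectors of $\T^*$ and $\T$, hence of $\L^*$ and $\L$.

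With $T$ fixed the measure $\mu$ is still free, subject only to $\sum_n\mu_n/(t_n^2+1)<\infty$, and this is precisely the freedom used in Theorems \ref{mdim} and \ref{indim}. For $N=\infty$ I would apply Theorem \ref{indim} to obtain $\mu$ together with an $M$-basis of reproducing kernels $\{k_\lambda\}_{\lambda\in\Lambda}$ and a partition $\Lambda=\Lambda_1\cup\Lambda_2$ with ${\rm def}(\Lambda_1,\Lambda_2)=\infty$. For finite $N$ I would instead pick $\mu$ with $\sum_n\mu_n=\infty$ but containing a sparse subsequence $t_{n_k}$ satisfying $\sum_k t_{n_k}^{2N-2}\mu_{n_k}<\infty$---easily arranged by making $\mu_{n_k}$ decay rapidly along a lacunary set of indices while the total mass stays infinite---and invoke Theorem \ref{mdim}, part $1$, which yields an $M$-basis with a partition of defect exactly $N$. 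In both cases $\{k_\lambda\}$ is complete and minimal with complete biorthogonal $\{g_\lambda\}$.

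Transporting back, $\{g_\lambda\}=\mathcal{E}(\L)$ and $\{k_\lambda\}=\mathcal{E}(\L^*)$ are both complete; since ${\rm def}(\Lambda_1,\Lambda_2)>0$ the $M$-basis is not strong, so by the criterion of Markus \cite{markus} the operator $\L$ does not admit the spectral synthesis. For the quantitative claim I would set $M=\big(\ospan\{k_\lambda:\lambda\in\Lambda_2\}\big)^\perp$, which is $\L$-invariant because the $k_\lambda$ are $\L^*$-eigenvectors; by the biorthogonality $(g_\mu,k_\lambda)=\delta_{\mu\lambda}$ the eigenvectors of $\L$ lying in $M$ are exactly $\{g_\mu:\mu\in\Lambda_1\}$. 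Therefore
$$
M\ominus\ospan\{\mathcal{E}(\L)\cap M\}=\big(\ospan\{k_\lambda:\lambda\in\Lambda_2\}\cup\ospan\{g_\mu:\mu\in\Lambda_1\}\big)^\perp=\big(\{g_\mu\}_{\mu\in\Lambda_1}\cup\{k_\lambda\}_{\lambda\in\Lambda_2}\big)^\perp,
$$
whose dimension equals ${\rm def}(\Lambda_1,\Lambda_2)=N$.

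The main obstacle, I expect, lies in the compatibility of the model rather than in the functional-analytic translation above: one has to verify that $G$ may be taken with only real zeros so that $\sigma(\L)$ is real, that passing between the unbounded model operator $\T$ and the bounded perturbation $\L$ (via the inversion $z\mapsto1/z$) preserves the lattice of invariant subspaces and the eigenstructure, so that root vectors coincide with eigenvectors and no Jordan cells occur, and that the measure $\mu$ produced by Theorems \ref{mdim} and \ref{indim} simultaneously satisfies the de Branges admissibility condition $\sum_n\mu_n/(t_n^2+1)<\infty$. Granting these checks, the identification of the synthesis defect with ${\rm def}(\Lambda_1,\Lambda_2)$ is exactly the computation displayed above.
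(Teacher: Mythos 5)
Your proposal is correct and follows essentially the same route as the paper's own deduction: setting $t_n=1/s_n$, invoking Theorems \ref{mdim} and \ref{indim} to produce the measure $\mu$ and a non-strong $M$-basis with ${\rm def}(\Lambda_1,\Lambda_2)=N$, transporting through the functional model of \cite{bd}, and taking $M=\{k_\lambda:\lambda\in\Lambda_2\}^\perp$ with the same defect computation. The compatibility checks you flag (real zeros of $G$, the inversion in the model, admissibility of $\mu$) are exactly the points the paper likewise delegates to \cite{bd} and to the constructions behind Theorems \ref{mdim} and \ref{indim}.
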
  

Indeed, let $\A$ be a compact self-adjoint  operator with simple spectrum $\{s_n\}$, $s_n\ne 0$.
Put $T = \{t_n\}$, $t_n = 1/s_n$. By Theorems \ref{mdim} and \ref{indim} 
one can construct a measure $\mu = \sum_n \mu_n \delta_{t_n}$ 
such that the de Branges space $\he$
with the spectral data $(T, \mu)$ contains a complete and minimal system 
of reproducing kernels $\{k_\lambda\}_{\lambda\in \Lambda}$ with the
generating function $G$, which is not a strong $M$-basis.
Moreover, we can achieve that ${\rm def}\,(\Lambda_1, \Lambda_2) =N$ for some 
system of the form $\{g_\lambda\}_{\lambda\in \Lambda_1} \cup
\{k_\lambda\}_{\lambda\in \Lambda_2}$.
By the functional model, 
there exists a rank one perturbation $\L$ of $\A$ whose eigenvectors may be identified 
(via a unitary equivalence) with $\{g_\lambda\}_{\lambda\in \Lambda}$.
Clearly, $M = \{k_\lambda:\, \lambda\in \Lambda_2\}^\perp$ is an $\L$-invariant subspace, 
and $g_\lambda \in M$ for $\lambda\in \Lambda_1$, but the complement of
$\{g_\lambda\}_{\lambda\in \Lambda_1}$ in $M$ is of dimension $N$.

\subsection{Strong $M$-bases in the general setting}

It is not a completely trivial problem to produce a nonhereditarily complete system in a separable Hilbert space.
First explicit examples of $M$-bases which are not strong were
constructed by Markus \cite{markus} in 1970. Later, Nikolski,
Dovbysh and Sudakov studied in detail the structure of
nonhereditarily complete systems in a Hilbert space and produced
many further examples. One more series of examples was constructed
by Larson and Wogen \cite{lar}, Azoff and Shehada \cite{az} and
Katavolos, Lambrou, and Papadakis \cite{kat}. These systems  are
obtained by an application of a three-diagonal matrix to the
standard orthonormal basis in $\ell^2$.


Clearly, the property of being a strong $M$-basis is necessary
for the existence of a linear summation method for the Fourier series \eqref{rf}.
It is far from being sufficient. E.g., one may deduce from the results
of an interesting paper \cite{kat} that there are strong $M$-bases with
the following property: there exist two vectors $h_1$ and $h_2$ in $H$
such that one cannot approximate $h_1$ by a linear combination of
$(h_1,\tilde x_n) x_n$ and $h_2$ by a linear combination of
$(h_2,\tilde x_n) x_n$ with the same coefficients.

\medskip

{\bf Organization of the paper.} The paper is organized as
follows. Main Theorem \ref{mainn} is proved in Sections
\ref{strat}--\ref{proofcod}. A detailed outline of
the proof is given in Subsection \ref{outline}. Theorem
\ref{fock1} about de Branges spaces coinciding with Fock-type
spaces is proved in Section \ref{fock}. Examples of $M$-bases of
kernels with large defects are constructed in Section
\ref{infdim}.
\smallskip

{\bf Notations.} Throughout this paper the notation $U(x)\lesssim V(x)$ (or, equivalently,
$V(x)\gtrsim U(x)$) means that there is a constant $C$ such that
$U(x)\leq CV(x)$ holds for all $x$ in the set in question, which may
be a Hilbert space, a set of complex numbers, or a suitable index
set. We write $U(x)\asymp V(x)$ if both $U(x)\lesssim V(x)$ and
$V(x)\lesssim U(x)$. We write $u_n \ll v_n$ (usually in the context of sequences)
if $u_n =o (v_n)$ as $n\to \infty$. For an entire function $F$ we denote by
$\mathcal{Z}_F$ the set of its zeros. For a finite
set $Y$ we denote by $\#Y$ the number of its elements.
\medskip

{\bf Acknowledgements.}
The authors are grateful to Dmitry Yakubovich 
for many useful remarks 
and to Mikhail Sodin 
for his constructive comments that helped to improve the presentation of the paper.
\bigskip


\section{Preliminaries on de Branges spaces and Clark measures}
\label{debr}

\subsection{De Branges spaces}
\label{debr0}
An entire function $E$ is said to be in the Hermite--Biehler class if
$$
|E(z)| > |E^*(z)|,  \qquad z\in {\mathbb{C}_+},
$$
where $E^* (z) = \overline {E (\overline z)}$. 
With any such function we associate the {\it de Branges space}
$\mathcal{H} (E) $ which consists of all entire functions
$F$ such that $F/E$ and $F^*/E$ restricted to $\mathbb{C_+}$ belong
to the Hardy space $H^2=H^2(\mathbb{C_+})$.
The inner product in $\he$ is given by
$$
( F,G)_E = \int_\rl \frac{F(t)\overline{G(t)}}{|E(t)|^2} \,dt.
$$
The reproducing kernel of the de Branges space ${\mathcal H} (E)$
corresponding to the point $w\in \mathbb{C}$ is given by
\begin{equation}
\label{repr}
k_w(z)=\frac{\overline{E(w)} E(z) - \overline{E^*(w)} E^*(z)}
{2\pi i(\overline w-z)} =
\frac{\overline{A(w)} B(z) -\overline{B(w)}A(z)}{\pi(z-\overline w)},
\end{equation}
where the entire functions $A$ and $B$ are defined by $A = \frac{E+E^*}{2}$,
$B=\frac{E^*-E}{2i}$, so that $A$ and $B$ are real on $\mathbb{R}$
and $E=A - iB$.

There exists an equivalent axiomatic description of de Branges
spaces \cite[Theorem 23]{br}: any reproducing kernel Hilbert space
of entire functions $\mathcal{H}$ such that the mapping $F\mapsto
F^*$ preserves the norm in $\mathcal{H}$ and the mapping $F\mapsto
\frac{z-\overline w}{z-w}F(z)$ is an isometry in $\mathcal{H}$
whenever $w\in\co\setminus\mathbb{R}$, $F(w) = 0$, is of the form
$\he$ for some $E$ in the Hermite--Biehler class.

From now on we restrict ourselves to the case when $E$
has no real zeros and, correspondingly, $\he$ has no common zeros 
in the complex plane.

The space $\mathcal{H}(E)$ is essentially defined (up to a canonical isomorphism)
by the function $\Theta_E = E^*/E$ which is inner in $\cp$ and
meromorphic in $\co$: the mapping $F\mapsto F/E$
is a unitary operator from $\mathcal{H}(E)$
onto the subspace $K_{\Theta_E} = H^2\ominus\Theta_E H^2$ of the Hardy space
$H^2$ known as a {\it model subspace}.

However, it is often useful to think about de Branges spaces not in terms of
the zeros of $E$, but in terms of the zeros of $A$ (or $B$)
and some associated measure supported by $\mathcal{Z}_A$.
It is a crucial property of de Branges spaces that there
exists a family of orthogonal bases of reproducing kernels
corresponding to real points \cite[Theorem 22]{br}.
Namely, for any $\alpha \in \tz = \{z\in \co: |z|=1\}$ consider the set
$T_\alpha$ of points $t_{\alpha, n} \in \mathbb{R}$ such that
$\Theta_E(t_{\alpha, n}) = \alpha$.
Then the system of reproducing kernels $\{k_{t_{\alpha, n}}\}$
is an orthogonal basis for $\he$ for each $\alpha\in \tz$
except, may be, one ($\alpha$ is an exceptional value if and only if
$E_\alpha:= (\alpha E - E^*)/2 \in \he$).

The points $t_{\alpha, n}$ may be also obtained via the so-called
{\it phase function} for $E$. For an Hermite--Biehler function $E$
there exists a smooth increasing function $\phi$ (called the phase function of $E$)
such that $E(t)e^{i\phi(t)} \in \rl$, $t\in \rl$ (equivalently, $2\phi$
is a branch of the argument of $\Theta_E$ on $\rl$).
Clearly, $\phi$ is uniquely defined up to a constant $\pi l$, $l\in \mathbb{Z}$.
Then $t_{\alpha, n}$ are the solutions of the equation
$\phi(t_{\alpha, n}) = \frac12 \arg \alpha + \pi n$. Note that
$t_{\alpha, n}$ may exist for all $n\in \mathbb{Z}$, for $n\in [n_1,\infty)$
or for $n\in (-\infty, n_2]$, where $n_1$, $n_2 \in \mathbb{Z}$. The case
where there is only finite number of solutions $t_{\alpha, n}$
corresponds to finite-dimensional de Branges spaces where
the strong $M$-basis problem is trivial.

Thus, for all $\alpha \in \tz$ except the possible exceptional value,
the system $\big\{\frac{E_\alpha(z)}{z-t_{\alpha, n}}\big\}$
is an orthogonal basis in $\he$
and any function $F\in\he$ admits the expansion
$$
F(z) = E_\alpha(z) \sum_n \frac{a_n \mu_{\alpha,n}^{1/2}}{z-t_{\alpha,n}},
$$
where $\{a_n\} \in \ell^2$ and $\mu_{\alpha,n} = \pi^2\big\|\frac{E_\alpha(z)}
{z-t_{\alpha, n}}\big\|_E^{-2}$. Also, by \eqref{repr},
$$
\frac{E_\alpha(z)}{z-t_{\alpha, n}} =
-\frac{\pi i}{E(t_{\alpha, n})}k_{t_{\alpha, n}}(z), \qquad
\mu_{\alpha,n} = \frac{\pi}{\phi'(t_{\alpha,n})}
$$
(recall that $\|k_t\|^2_E = |E(t)|^2 \phi'(t)/\pi$, $t\in \rl$).

Let $\mu_\alpha = \sum_n \mu_{\alpha,n} \delta_{t_{\alpha,n}}$.
We call the elements of the family $(T_\alpha, \mu_\alpha)$
(excluding the possible exceptional value of $\alpha$) the {\it spectral
data} for the de Branges space $\he$.
In what follows it will be often convenient to pass from one
data (i.e., orthogonal basis of reproducing kernels)
to another; this method played also a crucial role in \cite{bbb}.


\subsection{An alternative approach to de Branges spaces}
\label{alter}

In what follows we will always assume that $A \notin \he$ (in other words, $(T_{-1},\mu_{-1})$ is a spectral data for $\he$).
Let $T=T_{-1}=\{t_n\}$ be the zero set of $A$, that is,
the set $\{t:\, \Theta_E(t) = -1\}$. Then
$\big\{\frac{A(z)}{z-t_n}\big\}$ is an orthogonal basis in $\he$
and any function $F\in\he$ admits the expansion
\begin{equation}
\label{cauch}
F(z) = A(z) \sum_n \frac{a_n \mu_n^{1/2}}{z-t_n},
\end{equation}
where $\sum_n \frac{\mu_n}{t_n^2+1}<\infty$
and $\|\{a_n\}\|_{\ell^2} = \|F\|_E/\pi <\infty$.
Conversely, this expansion may be taken as a definition of the de Branges space.
Starting from any increasing sequence $T = \{t_n\}_{n\in N}$, $|t_n|\to\infty$
as $|n| \to\infty$, and a measure
$\mu = \sum_n \mu_n \delta_{t_n}$ satisfying $\sum_n \frac{\mu_n}{t_n^2+1}<\infty$
we may consider the space of all entire functions of the form \eqref{cauch}
with the norm $\|F\| = \pi\|\{a_n\}\|_{\ell^2}$. Here $A$ can be taken to be any entire
function with simple zeros which is real on $\rl$ and whose zero set
coincides with $T$.
Then the corresponding space of entire functions is a de Branges space $\he$
for some $E$ with $\frac{E+E^*}{2}=A$. The corresponding function $B$
will be given by
\begin{equation}
\label{herg}
\frac{B(z)}{A(z)} = r + \frac 1{\pi}\sum_n \bigg(\frac{1}{t_n-z}
- \frac{t_n}{t_n^2+1}\bigg) \mu_n,
\end{equation}
where $r$ is some real number.

In \cite{BMS} a more general model for reproducing kernel Hilbert spaces
of entire functions with a Riesz basis of reproducing kernels was developed.
Let $\mathcal{H}$ be a reproducing kernel Hilbert space
of entire functions such that in $\mathcal{H}$ there exists
a Riesz basis of reproducing kernels $\{k_{w_n}^{\mathcal{H}}\}_{w_n \in W}$
and such that $\mathcal{H}$ is closed under division by zeros:
if $F\in \mathcal{H}$ and $F(w) = 0$, $w\in \co$, then $\frac{F(z)}{z-w} \in \mathcal{H}$.
Then there exists a positive sequence $\mu_n$ such that
$\sum_n \frac{\mu_n}{|w_n|^2+1} < \infty$ and an entire function
$A$ (which has only simple zeros exactly on $W$) such that
$\mathcal{H}$ coincides with the space of the  entire functions of the form
$F(z) = A(z) \sum_n \frac{a_n \mu_n^{1/2}}{z-w_n}$ and $\|F\|_{\mathcal{H}} \asymp
\|\{a_n\}\|_{\ell^2}$.

The structure of such spaces is determined by the data $(W, \mu)$,
and does not depend on the choice of $A$. If $A_1= AS$ for a nonvanishing entire function
$S$, then the mapping $F\mapsto SF$ is an isomorphism of the corresponding space.
Therefore, it makes sense to consider the space of meromorphic functions
(the Cauchy transform)
$$
\mathcal{H}(W, \mu)= \bigg\{f(z) = \sum_n \frac{a_n \mu_n^{1/2}}{z-w_n}: \,
\{a_n\}\in \ell^2, \|f\|_{\mathcal{H}(W, \mu)}:=\pi\|\{a_n\}\|_{\ell^2} \bigg\}
$$
De Branges spaces correspond to the case when all $w_n$ are real.


\subsection{Remarks on Clark measures}
\label{clar}

Recall that all spectral data $(T_\alpha, \mu_\alpha)$
may be obtained as follows: let $\mu_\alpha$
be the measure from the Herglotz representation
$$
\rea \frac{\alpha E(z) + E^*(z)}{\alpha E(z) - E^*(z)} = p_\alpha y 
+ \frac{y}{\pi}\int_\mathbb{R}\frac{d\mu_\alpha(t)}{|t-z|^2},
$$
where $z= x +iy$, $p_\alpha\ge 0$. 
Then $\mu_\alpha$ is an atomic measure supported by the zero set $T_\alpha$
of the function $\alpha E - E^*$. Measures $\mu_\alpha$ are often called
{\it Clark measures} after the seminal paper \cite{cl} though in the de Branges
space context they were introduced much earlier by de Branges himself.
\smallskip

Let us mention some well-known properties of the measures $\mu_\alpha$:
\smallskip

1. The system $\{k_t\}_{t\in T_\alpha}$ is an orthogonal basis in $\he$
for all $\alpha \in \tz$ except, possibly, one; $\alpha$ is an
exceptional value if an only if $p_\alpha>0$ (or, equivalently,
$\alpha E - E^* \in \he$) (see \cite[Problem 89]{br}).
\smallskip

2. $\mu_\alpha(\{t\}) = 2\pi |\Theta_E'(t)|^{-1} =
|E(t)|^2 \|k_t\|^{-2}_E$, $t\in T_\alpha$, whence the embedding of
$E^{-1} \he$ into $L^2(\mu_\alpha)$ is a unitary operator (only a co-isometry for the
exceptional value of $\alpha$).
\smallskip

3. If there exists an exceptional value of $\alpha$, then  $\mu_\beta(\rl)<\infty$
for any $\beta\in \tz$, $\beta\ne \alpha$ (indeed, the function $(\alpha E-E^*)/E$
is a nonzero constant on $T_\beta$ and belongs to $L^2(\mu_\beta)$).
\smallskip

4. If $\mu_\beta(\rl)<\infty$ for some $\beta\in \tz$, then there
exists an exceptional value $\alpha$ such that $\alpha E - E^* \in \he$.
Indeed, set $\tilde E=\beta E=\tilde A-i\tilde B$. If $\tilde A\notin \mathcal H(\tilde E)=\he$, then 
$\tilde B/\tilde A-\sum \mu_n/(\cdot -t_n)=c\in\mathbb R$, with $(t_n)=\mathcal Z_{\tilde A}$, $(\mu_n)=\mu_\beta$
and hence,
$(1+ic)/(1-ic)\tilde E-(\tilde E)^*\in\he$.
\smallskip

The masses of the Clark measures are determined by the equality $|\Theta_E'| = 2\phi'$ and,
therefore,
the estimates of the derivatives of meromorphic inner functions are of importance here.
Since $2A = E(1+\Theta_E)$ and $2iB =E( \Theta_E -1)$, it follows from
equation \eqref{herg} that
\begin{equation}
\label{use}
|\Theta_E'(t)| = \bigg|
i +r + \sum_n \frac{\mu_n}{\pi} \bigg(\frac{1}{t_n -t} -\frac{1}{t_n}\bigg)
\bigg|^{-2} \sum_n \frac{2 \mu_n}{\pi(t_n -t)^2}, \qquad t\in \mathbb{R},
\end{equation}
a formula which proves to be useful in what follows.


\section{Plan of the proof of Theorem \ref{mainn}}
\label{strat}

In this section we discuss the first step of the proof (reduction of the strong
$M$-basis problem to a system of interpolation equations)
and give a detailed outline of the proof.


\subsection{Reduction to an interpolation problem}
\label{reduc}

We will use the following criterion for being a strong $M$-basis:
{\it Let $\{x_n\}_{n\in N}$ be an $M$-basis in a Hilbert space $H$.
Then $\{x_n\}_{n\in N}$ is a strong $M$-basis if and only if for any
$h,\, \tilde h \in H$ such that $(h, x_n)\cdot(\tilde x_n, \tilde h) =0$
we have $(h,\tilde h) = 0$} (see, e.g., \cite{kat}).  Indeed, it is an obvious
reformulation of the definition of a strong $M$-basis that for any partition
$N=N_1 \cup N_2$, $\ospan\{x_n: n\in N_1\} = \{\tilde x_n: n\in N_2\}^\perp$.
The condition $(h, x_n)\cdot(\tilde x_n, \tilde h) =0$ means that for some partition $N=N_1 \cup N_2$ we have
$h \in \{x_n: n\in N_1\}^\perp$
and $\tilde h \in \{\tilde x_n: n\in N_2\}^\perp$,
whence $(h,\tilde h) = 0$. The converse implication is analogous.

We will apply this criterion to an $M$-basis of reproducing kernels
$\{k_\lambda\}_{\lambda\in \Lambda}$. Recall that its biorthogonal
system is given by $\Big\{\frac{G}{G'(\lambda)(\cdot-\lambda)}\Big\}$, where
$G$ is the so-called {\it generating function} of the set $\Lambda$, that is, an entire function with simple zeros whose zero set
coincides with $\Lambda$, $G\notin\he$ (moreover, $HG\in\he$ for an entire function $H$ implies that $H=0$), but $g_\lambda :=
\frac{G}{\cdot-\lambda} \in \he$ for any $\lambda\in \Lambda$.
In what follows we will always omit the normalizing factor $G'(\lambda)$
and say that $\{g_\lambda\}$ is the system biorthogonal to $\{k_\lambda\}$.
Then the system $\{k_\lambda\}_{\lambda\in \Lambda}$
is a strong $M$-basis if and only if any two vectors $h$, $\tilde h$ such that
\begin{equation}
(h, k_\lambda)\cdot(g_\lambda, \tilde h) = 0,\qquad \lambda\in\Lambda,
\label{ab}
\end{equation}
are orthogonal, $(h, \tilde h)=0$.

Without loss of generality (passing if necessary to some other spectral
data $(T_\alpha, \mu_\alpha)$) we can assume that
$\Lambda\cap T=\emptyset$. Then we rewrite condition \eqref{ab}.
Consider the expansions of the vectors $h$, $\tilde h$ with respect to
the de Branges orthonormal basis $\frac{k_{t_n}(z)}{\|k_{t_n}\|_E}
= \frac{(-1)^n\mu_n^{1/2}}{\pi}\cdot \frac{A(z)}{z-t_n}$,
$$
  h(z)=\pi\sum_{n}\overline{a_n}(-1)^n\cdot\frac{k_{t_n}(z)}{\|k_{t_n}\|}=
  A(z)\sum_{n}\frac{\overline{a_n}\mu^{1/2}_n}{z-t_n},\qquad \{a_n\}\in\ell^2,
$$
$$
  \tilde h(z)=\pi\sum_{n}\overline{b_n}(-1)^n\cdot\frac{k_{t_n}(z)}{\|k_{t_n}\|} =
  A(z)\sum_{n}\frac{\overline{b_n}\mu^{1/2}_n}{z-t_n}, \qquad \{b_n\}\in\ell^2.
$$
Then $(h, \tilde h)=\pi^2\sum_n\overline{a_n}b_n$.
Equation \eqref{ab} means that
there exists a partition $\Lambda=\Lambda_1\cup\Lambda_2$,
$\Lambda_1\cap\Lambda_2 = \emptyset$, such that
$(g_\lambda,\tilde h)=0$ for $\lambda\in\Lambda_1$
and $(h,k_\lambda)=0$ for $\lambda\in\Lambda_2$.
The second of these equalities means simply that
$h(\lambda) =0$, $\lambda\in\Lambda_2$, while
the first one may be rewritten as
$$
  \biggl{(}\frac{G(z)}{z-\lambda}, \tilde h\biggr{)}=
 \pi^2 \sum_n\frac{b_nG(t_n)}{A'(t_n)\mu^{1/2}_n(t_n-\lambda)}=0, \qquad \lambda\in\Lambda_1.
$$
Hence, we have a system of interpolation equations
\begin{equation}
 \label{main1}
  A(z)\sum_{n}\frac{b_nG(t_n)}{\mu^{1/2}_nA'(t_n)(z-t_n)}={G_1(z)S_1(z)},
\end{equation}
\begin{equation}
  \label{main2}
  A(z)\sum_{n}\frac{\overline{a_n}\mu^{1/2}_n}{z-t_n}={G_2(z)S_2(z)},
\end{equation}
where $G_1, G_2$ are \itshape some \normalfont entire functions
with simple zeros and zero sets $\Lambda_1,\Lambda_2$ respectively,
such that $G=G_1G_2$, while $S_1$ and $S_2$ are some entire functions.
Since $G$ is the generating function of an exact system of reproducing kernels,
we have $G/G^*=B_1/B_2$  for some Blaschke products $B_1, B_2$.
Next, we can assume that $G_1/G_1^*$, $G_2/G_2^*$ are also chosen
to be the ratios of two Blaschke products.

It should be mentioned that the above argument includes the case when
the biorthogonal system $\{g_\lambda\}_{\lambda\in \Lambda}$  is incomplete:
in this case we take $G_1 = G$ and $G_2=1$, and equation \eqref{main2}
becomes trivial.

Thus, we have proved the following proposition.

\begin{proposition}
\label{firstred}
Let $\he$ be a de Branges space. Then
$\he$ does not have strong $M$-basis property if and only if there exists an $M$-basis
$\{k_\lambda\}_{\lambda\in \Lambda}$ of reproducing kernels in $\he$
with the generating function $G$ such that for some partition
$\Lambda=\Lambda_1\cup\Lambda_2$, $\Lambda_1\cap\Lambda_2 = \emptyset$,
and for some spectral data $(T, \mu)$ for $\he$ such that $\Lambda_1\cap T = \emptyset$,
there exist sequences $\{a_n\},\, \{b_n\} \in\ell^2$ such that
$\sum_n \overline a_n b_n\ne 0$ and equations
\eqref{main1}--\eqref{main2} are satisfied for some entire functions
$S_1$ and $S_2$.
\end{proposition}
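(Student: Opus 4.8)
The plan is to turn the abstract failure of the strong $M$-basis property into the analytic system \eqref{main1}--\eqref{main2} by applying the criterion recalled at the start of this subsection, arguing the equivalence as a reversible chain of reformulations rather than as two separate implications.

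I would begin by unwinding the criterion. By it, $\he$ fails the strong $M$-basis property exactly when some $M$-basis $\{k_\lambda\}_{\lambda\in\Lambda}$ of reproducing kernels carries a pair $h,\tilde h\in\he$ with $(h,\tilde h)\ne0$ and $(h,k_\lambda)\cdot(g_\lambda,\tilde h)=0$ for all $\lambda$. The per-$\lambda$ vanishing of the product is equivalent to the existence of a partition $\Lambda=\Lambda_1\cup\Lambda_2$ with $(h,k_\lambda)=0$ on $\Lambda_2$ and $(g_\lambda,\tilde h)=0$ on $\Lambda_1$ (assign $\lambda$ to $\Lambda_2$ if the first factor vanishes and to $\Lambda_1$ otherwise). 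Then I would fix spectral data to my advantage: since the kernels $\{k_{t_{\alpha,n}}\}$ form an orthogonal basis for all but at most one $\alpha\in\tz$, while $T_\alpha$ meets the countable set $\Lambda_1$ for only countably many $\alpha$, I can choose $\alpha$ with $\Lambda_1\cap T=\emptyset$. Expanding $h$ and $\tilde h$ in the corresponding orthonormal basis gives coefficients $\{a_n\},\{b_n\}\in\ell^2$ and, crucially, $(h,\tilde h)=\pi^2\sum_n\overline{a_n}b_n$, so the hypothesis $(h,\tilde h)\ne0$ becomes precisely $\sum_n\overline{a_n}b_n\ne0$.

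The core step is to recast the two families of orthogonality relations as divisibility statements. On $\Lambda_2$ the reproducing property gives $(h,k_\lambda)=h(\lambda)$, so $(h,k_\lambda)=0$ means $h$ vanishes on $\Lambda_2$; writing $h$ in the Cauchy form \eqref{cauch} and letting $G_2$ be a generating function of $\Lambda_2$, this is exactly \eqref{main2} with entire quotient $S_2=h/G_2$ (the simple zeros of $G_2$ being absorbed). On $\Lambda_1$, where $\Lambda_1\cap T=\emptyset$ keeps the poles simple, I would expand $(g_\lambda,\tilde h)$ through $g_\lambda=G/(\,\cdot-\lambda)$ and the orthonormal expansion of $\tilde h$, obtaining the Cauchy sum displayed just before \eqref{main1}; its vanishing on $\Lambda_1$ forces divisibility by a generating function $G_1$ of $\Lambda_1$ and produces \eqref{main1} with $S_1$ entire. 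The factorisation $G=G_1G_2$ is simply the splitting of the zero set, and the choice of $G,G_1,G_2$ as ratios of Blaschke products follows from exactness of the system; the degenerate partition is covered by the conventions $G_1=G$, $G_2=1$, under which \eqref{main2} is trivial, matching the incomplete-biorthogonal case.

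The step I expect to be most delicate is the $\Lambda_1$ computation. One must check that the pairing $(g_\lambda,\tilde h)$ genuinely collapses to the displayed Cauchy sum (this rests on $g_\lambda=G/(G'(\lambda)(\,\cdot-\lambda))$ together with the orthonormal expansion and convergence of the series), and that the left-hand side of \eqref{main1} is \emph{entire}: the zeros of $A$ at $T$ must cancel the poles of the Cauchy sum, after which divisibility by $G_1$ keeps $S_1$ entire. To close the equivalence I would run every reformulation backwards: starting from entire $S_1,S_2$ solving \eqref{main1}--\eqref{main2} with $\sum_n\overline{a_n}b_n\ne0$, reconstruct $h,\tilde h$, recover the two orthogonality families, and thereby exhibit the failure of the strong $M$-basis property.
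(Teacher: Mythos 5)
Your proposal is correct and follows essentially the same route as the paper's own proof in Subsection \ref{reduc}: the same strong $M$-basis criterion from \cite{kat}, the same orthonormal expansion yielding $(h,\tilde h)=\pi^2\sum_n\overline{a_n}b_n$, and the same conversion of the two orthogonality families into the divisibility statements \eqref{main1}--\eqref{main2}, including the degenerate convention $G_1=G$, $G_2=1$. The only (welcome) addition is your explicit countability argument for choosing spectral data with $\Lambda_1\cap T=\emptyset$, a step the paper dispatches with a ``without loss of generality'' remark.
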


Comparing the values at the points $t_n$ in \eqref{main1}--\eqref{main2}
we obtain the following important relation:
if $S=S_1S_2$ then
\begin{equation}
S(t_n) = A'(t_n) \overline a_n b_n.
\label{st12}
\end{equation}

\begin{remark}
\label{st14}
{\rm 1. It is worth mentioning that if we want to find an $M$-basis
of reproducing kernels which is not strong, then it is sufficient to
find  two non-orthogonal sequences $\{a_n\}$ and $\{b_n\}$ which satisfy
\eqref{main1}--\eqref{main2}.
On the other hand, it is clear from the above construction that
if a function $h$ is orthogonal to the system $\{g_\lambda\}_{\lambda\in \Lambda_1}
\cup \{k_\lambda\}_{\lambda\in \Lambda_2}$, then
equations \eqref{main1}--\eqref{main2} are satisfied with $\{a_n\}=\{b_n\}$.
\smallskip

2. If $\{k_\lambda\}_{\lambda\in \Lambda}$ is an $M$-basis,
then the mixed system
$\{g_\lambda\}_{\lambda\in \Lambda_1}
\cup \{k_\lambda\}_{\lambda\in \Lambda_2}$ is always complete
in the case when $\Lambda_1$ or $\Lambda_2$ is a finite set. So we need to consider
only the partitions into infinite subsets. Furthermore, if the system
$\{g_\lambda\}_{\lambda\in \Lambda_1} \cup \{k_\lambda\}_{\lambda\in \Lambda_2}$
is incomplete, then, under some mild conditions on $\he$, there is a strong asymmetry between the two sets: $\Lambda_1$
should be a small (sparse) part of $\Lambda$ (see \cite[Theorems 1.2, 1.5]{bbb}).}
\end{remark}

The following 
remark shows that in Proposition \ref{firstred}
we can always {\it fix some spectral data} $(T, \mu)$
and consider only systems $\{k_\lambda\}_{\lambda\in \Lambda}$
with $\Lambda_1 \cap T= \emptyset$.

\begin{remark}
\label{imp0}
{\rm  Assume that $\{k_\lambda\}_{\lambda\in \Lambda}$ is an $M$-basis which
is not strong. 
Then, perturbing slightly the sequence $\Lambda_1$, we may construct a new sequence
$\tilde \Lambda_1$ with $\tilde \Lambda_1 \cap T
= \emptyset$
such that
$\{k_\lambda\}_{\lambda\in \tilde \Lambda_1\cup\Lambda_2}$
is an $M$-basis, but the system $\{g_\lambda\}_{\lambda\in \tilde \Lambda_1}
\cup \{k_\lambda\}_{\lambda\in \Lambda_2}$ is not complete in $\he$.

Indeed, let $\Lambda_1 = \{\lambda_j^1\}$ and let $\tilde \lambda_j^1 \notin T
$  be a perturbation of $\lambda_j^1$ so small that
$1/2 \le \prod_j \Big|\frac{t_n - \tilde \lambda_j^1}{t_n - \lambda_j^1}\Big| \le 2$ for any $n$.
It is easy to see that for sufficiently small perturbations
the equation \eqref{main1} is stable under
multiplication by $\prod_j \frac{z - \tilde \lambda_j^1}{z - \lambda_j^1}$:
$$
\frac{G_1(z)S_1(z)}{A(z)} \cdot \prod_j \frac{z -
\tilde \lambda_j^1}{z - \lambda_j^1} =
\sum_n
\frac{G(t_n)b_n}{\mu_n^{1/2}A'(t_n)(z-t_n)} \cdot \prod_j \frac{t_n - \tilde \lambda_j^1}{t_n - \lambda_j^1}.
$$
Thus, we have an equation of the form (\ref{main1}) with $\tilde G_1(z) =
G_1(z) \prod_j \frac{z - \tilde \lambda_j^1}{z - \lambda_j^1}$ in place of $G_1$
and with the same $\{b_n\}$ and $S_1$, whence
$\{g_\lambda\}_{\lambda\in \tilde \Lambda_1}
\cup \{k_\lambda\}_{\lambda\in \Lambda_2}$ is not complete in $\he$.
At the same time, it is easy to show that sufficiently small perturbations
preserve the property to be an $M$-basis. }
\end{remark}


\subsection{Outline of the proof}
\label{outline}

In Section \ref{suff} the sufficiency of (i) or (ii) for the strong $M$-basis
property is proved. As mentioned above, we need to show
that equations \eqref{main1}--\eqref{main2}
do not have a nontrivial solution $\{a_n\}=\{b_n\}$.
While case (i) follows essentially from comparing of the asymptotics
along the imaginary axis, in case (ii) a subtler argument
on the asymptotics of the zeros of $h$ and $S$ is used.

The proof of the converse statement is more involved.
It splits into four cases which will be treated separately:
\medskip

(I) $\inf_n \mu_n = 0$ and $\sum_n\mu_n=\infty$;
\smallskip

(II) $\inf_n\mu_n>0$, and there exists a subsequence
$n_k$ such that $d_{n_k} = o(|t_{n_k}|)$,
$k\to \infty$, and $d_{n_k} \gtrsim |t_{n_k}|^{-N}$ for some $N>0$;
\smallskip

(III) $\inf_n\mu_n>0$, and there exists a subsequence
$n_k$ such that $d_{n_k}= o(|t_{n_k}|^{-N})$ for any $N>0$
as $k\to \infty$;
\smallskip

(IV) $\inf_n\mu_n>0$, $\inf_n\frac{d_n}{|t_n|}>0$,
and there exists a subsequence $n_k$ such
that the converse to the estimate \eqref{mulacunary} holds:
$$
\mu_{n_k}=o\Bigl{(}\sum_{|t_l|\leq|t_{n_k}|}
\mu_l+t^2_{n_k}\sum_{|t_l|>|t_{n_k}|}\frac{\mu_l}{t^2_l}\Bigr{)},\qquad k\to\infty.
$$

The proof for the case (I) is given in Section \ref{case1}.
One of its ingredients is the following result from \cite[Theorem 1.1]{bb}:
{\it if $\sum_n \mu_n <\infty$, then the de Branges space with the spectral data
$(T, \mu)$ contains an exact system of reproducing kernel with incomplete
biorthogonal system}. Some extension of this result (with a simplified proof)
is given in Section \ref{infdim}.

Another ingredient is the following observation which says roughly that
the strong $M$-basis property for a de Branges space implies the same property for all
spaces constructed from a part of its spectral data.
This observation will be also useful in the proofs of other cases.

\begin{proposition}
\label{submu}
Let $\he$ be a de Branges space with the spectral data $(T, \mu)$
and $\sum_n \mu_n = \infty$.
Let $T^\circ \subset T$, 
let $\mu^\circ = \mu|_{T^\circ}$,
and let $\mathcal{H}(E^\circ)$, $E^\circ = A^\circ -iB^\circ$,
be the de Branges space constructed from the
spectral data $(T^\circ, \mu^\circ)$. If there exists an exact system of
reproducing kernels $\{k^\circ_\lambda\}_{\lambda\in \Lambda^\circ}$
in $\mathcal{H}(E^\circ)$ which is not
a strong $M$-basis and whose generating function $G^\circ$ satisfies
\begin{equation}
\label{rest}
|G^\circ(iy)|\gtrsim |y|^{-N}|A^\circ(iy)|, \qquad |y| \to\infty,
\end{equation}
for some $N>0$, then
there exists an $M$-basis of reproducing kernels in $\he$
which is not a strong $M$-basis.
\end{proposition}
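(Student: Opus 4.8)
The plan is to transplant the non-strong system from $\mathcal{H}(E^\circ)$ to $\he$ by multiplying by the ``missing factor'' of $A$. Since the space $\he$ does not depend on the choice of the real entire function with zero set $T$, I would fix the $A^\circ$ coming from $\mathcal{H}(E^\circ)$ and write $A=A^\circ A^\flat$, where $A^\flat$ is the real entire function with simple zero set $T\setminus T^\circ$. The multiplication map $V\colon F^\circ\mapsto A^\flat F^\circ$ then sends $\mathcal{H}(E^\circ)$ isometrically onto $\M=\ospan\{k_{t_n}:t_n\in T^\circ\}\subset\he$, because $A^\circ\sum_{t_n\in T^\circ}\frac{c_n(\mu_n^\circ)^{1/2}}{z-t_n}\mapsto A\sum_{t_n\in T^\circ}\frac{c_n\mu_n^{1/2}}{z-t_n}$ and $\mu_n^\circ=\mu_n$ on $T^\circ$. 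I would then set $\Lambda=\Lambda^\circ\cup(T\setminus T^\circ)$, $G=G^\circ A^\flat$ (so $\mathcal{Z}_G=\Lambda$), and take the partition $\Lambda_1=\Lambda_1^\circ$, $\Lambda_2=\Lambda_2^\circ\cup(T\setminus T^\circ)$; after a harmless perturbation (Remark \ref{imp0}) one arranges $\Lambda^\circ\cap T=\emptyset$, so $\Lambda_1\cap T=\emptyset$ as required in Proposition \ref{firstred}.

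First I would check the interpolation equations. Extending $\{a_n^\circ\},\{b_n^\circ\}$ by zero off $T^\circ$ keeps them in $\ell^2$ and preserves $\sum_n\overline a_n b_n=\sum_{t_n\in T^\circ}\overline{a_n^\circ}b_n^\circ\ne0$. Using $\mu_n=\mu_n^\circ$, $A'(t_n)=(A^\circ)'(t_n)A^\flat(t_n)$ and $G(t_n)=G^\circ(t_n)A^\flat(t_n)$ on $T^\circ$, while $G(t_n)=0$ on $T\setminus T^\circ$, the factor $A^\flat$ comes out of both sides and \eqref{main1}, \eqref{main2} for $\he$ collapse to the corresponding equations for $\mathcal{H}(E^\circ)$; explicitly one gets $G_1=G_1^\circ$, $S_1=A^\flat S_1^\circ$, $G_2=A^\flat G_2^\circ$, $S_2=S_2^\circ$. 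By Proposition \ref{firstred} it then only remains to show that $\{k_\lambda\}_{\lambda\in\Lambda}$ is genuinely an $M$-basis in $\he$ with generating function $G$.

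Minimality and completeness I expect to be routine. For $g_\lambda=G/(\cdot-\lambda)\in\he$: when $\lambda\in\Lambda^\circ$ this is $Vg_\lambda^\circ\in\he$; when $\lambda=t_m\in T\setminus T^\circ$ I would fix some $\lambda_0\in\Lambda^\circ$ and use the identity $g_{t_m}=Vg^\circ_{\lambda_0}+(t_m-\lambda_0)\,(Vg^\circ_{\lambda_0})/(\cdot-t_m)$, where $(Vg^\circ_{\lambda_0})(t_m)=0$ (as $A^\flat(t_m)=0$), so the last term lies in $\he$ by closedness under division by zeros. Completeness, i.e.\ $HG\in\he\Rightarrow H=0$, follows by pole counting: if $HG^\circ A^\flat\in\he$ then $HG^\circ/A^\circ=(HG^\circ A^\flat)/A$ has poles only on $T^\circ$, so this function lies in $\M=V(\mathcal{H}(E^\circ))$ and equals $V(HG^\circ)$; hence $HG^\circ\in\mathcal{H}(E^\circ)$ and, $G^\circ$ being a generating function there, $H=0$.

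The hard part is the completeness of the biorthogonal system $\{g_\lambda\}_{\lambda\in\Lambda}$, and this is exactly where \eqref{rest} and $\sum_n\mu_n=\infty$ enter. Suppose $\tilde h=A\sum_n\overline{b_n}\mu_n^{1/2}/(\cdot-t_n)\in\he$ annihilates every $g_\lambda$. Computing $(g_\lambda,\tilde h)$ and carefully separating the diagonal contribution at the added points $t_m\in T\setminus T^\circ$, the equations $(g_\lambda,\tilde h)=0$ force the meromorphic function $\psi(z)=\sum_{t_n\in T^\circ}\frac{G^\circ(t_n)b_n}{(A^\circ)'(t_n)\mu_n^{1/2}(z-t_n)}$ to vanish on $\Lambda^\circ$, so $A^\circ\psi=G^\circ S^\circ$ for an entire $S^\circ$, and then to satisfy $b_m=\mu_m^{1/2}S^\circ(t_m)$ on $T\setminus T^\circ$; reading the residues of $\psi$ at $T^\circ$ gives the same relation there, so in fact $b_n=\mu_n^{1/2}S^\circ(t_n)$ for \emph{all} $t_n\in T$. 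Now the lower bound \eqref{rest} gives $|S^\circ(iy)|=|A^\circ(iy)\psi(iy)|/|G^\circ(iy)|\lesssim|y|^N|\psi(iy)|$, which is polynomially bounded because $\psi$ is a Cauchy transform; since $S^\circ$ is also of bounded type in both half-planes, it must be a polynomial. If $S^\circ\not\equiv0$, then $|S^\circ(t_n)|\gtrsim1$ for large $|t_n|$, whence $\sum_n|b_n|^2=\sum_n\mu_n|S^\circ(t_n)|^2\gtrsim\sum_{|t_n|\ \text{large}}\mu_n=\infty$, contradicting $\{b_n\}\in\ell^2$. Hence $S^\circ\equiv0$, all $b_n=0$, and $\tilde h=0$, so $\{g_\lambda\}$ is complete and $\{k_\lambda\}$ is the desired non-strong $M$-basis. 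The two delicate points I anticipate are justifying that $S^\circ$ is of bounded type (so that polynomial growth on $i\rl$ forces a polynomial) and the bookkeeping of the diagonal terms caused by $\Lambda_2\cap T\ne\emptyset$.
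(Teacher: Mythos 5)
Your proposal is correct, and its first half (the decomposition $A=A^\circ\tilde A$, zero-extension of the coefficient sequences, multiplication of the interpolation equations by $\tilde A$, the choices $G_1=G_1^\circ$, $G_2=G_2^\circ\tilde A$, $\Lambda=\Lambda^\circ\cup(T\setminus T^\circ)$, and the pole-counting proof that $HG\in\he$ forces $G^\circ H\in\mathcal{H}(E^\circ)$) is exactly the paper's argument. Where you genuinely diverge is the hardest step, completeness of the biorthogonal family. The paper does \emph{not} work with the data $(T,\mu)$ there: it passes to a second spectral data $(U,\nu)$ coming from $E_\alpha=(\alpha E-E^*)/2$, $\alpha\ne -1$, uses the Clark-measure facts of Subsection \ref{clar} together with $\sum_n\mu_n=\infty$ to guarantee $\nu(\rl)=\infty$, writes the annihilation condition as \eqref{bor2}, and then applies Krein's theorem plus \eqref{rest} to force the entire function $V$ to be a polynomial, which dies because $V\in L^2(\nu)$ with $\nu(\rl)=\infty$. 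You instead stay with $(T,\mu)$, absorb the diagonal terms produced by the points of $\Lambda_2\cap T$ into the relation $b_m=\mu_m^{1/2}(A^\circ(t_m)/G^\circ(t_m))\psi(t_m)$, factor $A^\circ\psi=G^\circ S^\circ$, and read residues to get $b_n=\mu_n^{1/2}S^\circ(t_n)$ on \emph{all} of $T$; your Krein-type step (bounded type in both half-planes, polynomial bound $|S^\circ(iy)|\lesssim|y|^N|\psi(iy)|$ from \eqref{rest}, hence $S^\circ$ a polynomial) is at the same level of rigor as the paper's, and the final contradiction $\sum_n\mu_n|S^\circ(t_n)|^2<\infty$ versus $\sum_n\mu_n=\infty$ is clean — indeed it is the most transparent use of the hypothesis $\mu(\rl)=\infty$ in either argument. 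What each route buys: yours avoids the Clark-measure machinery entirely and keeps a single set of spectral data, at the cost of the diagonal bookkeeping and of requiring the stronger genericity condition $\Lambda^\circ\cap T=\emptyset$ (Remark \ref{imp0} as stated perturbs only $\Lambda_1$ so as to preserve \eqref{main1}; to move zeros of $G_2^\circ$ off $\tilde T$ you must note that the same multiplicative-stability trick applies to \eqref{main2}, where it perturbs the coefficients $a_n$ rather than $b_n$ — a disjointness you need anyway, since a zero of $G^\circ$ on $\tilde T$ would make $G=G^\circ\tilde A$ have a double zero); the paper's detour to $(U,\nu)$ sidesteps the diagonal terms because $U$ is generically disjoint from $\Lambda$, but must invoke the absence of an exceptional Clark value (again via $\sum_n\mu_n=\infty$) to know $\nu(\rl)=\infty$. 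Both proofs use \eqref{rest} in exactly the same way, and both reach the same endgame of killing a polynomial by an $L^2$ condition against an infinite measure.
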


It should be emphasized that in the statement of Proposition \ref{submu}
the system $\{k^\circ_\lambda\}_{\lambda\in \Lambda^\circ}$ can have incomplete
biorthogonal system (this version will be used in the proof of Case I)
or can be an $M$-basis but not a strong one.

The proofs for the cases (II)--(IV) consist of two steps. At the first step
(Section \ref{case24}), in each
of these cases we construct two real sequences $\{a_n\}$ and $\{b_n\}$
with the following \medskip properties:

(a) $\{a_n\} \in \ell^1$, $\{b_n\} \in \ell^1$, $a_n \ne 0$, $b_n\ne 0$;
\smallskip

(b) $\sum_n a_n b_n >0$;
\smallskip

(c) the entire functions $h$ and $S$ defined by
\begin{equation}
\label{dv}
\frac{h(z)}{A(z)} = \sum_n\frac{a_n\mu_n^{1/2}}{z-t_n},\qquad
\frac{S(z)}{A(z)} = \sum_n\frac{a_nb_n}{z-t_n}
\end{equation}
have infinitely many common real zeros $\{s_k\}$ such that
$\dist (s_k, T) \gtrsim |s_k|^{-N}$, for some $N>0$.
\smallskip

(d) the function $h$ satisfies $|h(iy)| \gtrsim |y|^{-1}|A(iy)|$
(note that this estimate is trivially satisfied if $a_n>0$ for every $n$).
\medskip

Existence of common zeros is the crucial part of the proof. Once
such sequences are constructed, we obtain an $M$-basis of
reproducing kernels in $\he$ which is not strong by a certain
perturbation argument. This method was suggested in \cite{bbb}.
The following proposition (whose proof is given in
Section \ref{proofcod}) completes the proof of Theorem
\ref{mainn}.

\begin{proposition}
\label{cod}
Let $\he$ be a de Branges space with the spectral data $(T, \mu)$
such that $\sum_n \mu_n = \infty$,
and assume that there exist two sequences $\{a_n\}$ and $\{b_n\}$
satisfying $(a)$--$(d)$. Then there exists an $M$-basis of reproducing
kernels $\{k_\lambda\}$ in $\he$ which is not a strong $M$-basis and
such that its generating function $G$ satisfies $|G(iy)|\gtrsim |y|^{-N} |A(iy)|$
for some $N>0$.
\end{proposition}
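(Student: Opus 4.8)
The plan is to invoke the reduction of Proposition \ref{firstred}: by Remark \ref{st14}(1) it suffices to exhibit a generating function $G$ of an $M$-basis $\{k_\lambda\}_{\lambda\in\Lambda}$, a partition $\Lambda=\Lambda_1\cup\Lambda_2$, and entire functions $S_1,S_2$ so that the interpolation system \eqref{main1}--\eqref{main2} is solved by the \emph{given} pair $\{a_n\},\{b_n\}$; these are non-orthogonal by (b), so the resulting $M$-basis will fail to be strong. The functions $h$ and $S$ of \eqref{dv} are exactly the two sides to be factored: \eqref{main2} reads $h=G_2S_2$, so I would take $\Lambda_2$ inside $\mathcal{Z}_h$, while the compatibility relation \eqref{st12} forces $S_1S_2$ and $S$ to agree on $T$. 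The common zeros $\{s_k\}$ supplied by (c) are the device that makes this possible.

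Concretely, I would let $S_2$ be the canonical product over (an infinite subset of) the common zeros $\{s_k\}$, so that $S_2$ divides both $h$ and $S$; set $G_2:=h/S_2$ (entire, with $\Lambda_2=\mathcal{Z}_{G_2}$) and $S_1:=S/S_2$. Writing $G=G_1G_2$, a direct residue computation using $h(t_n)=a_n\mu_n^{1/2}A'(t_n)$ and $S(t_n)=a_nb_nA'(t_n)$ shows that the left-hand side $\Phi$ of \eqref{main1} and the product $G_1S_1$ take equal values at every $t_n$; hence $G_1S_1-\Phi=A\cdot U$ for some entire $U$, and \eqref{main1} is equivalent to $U\equiv0$. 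Since $\Phi$ depends linearly on the values $G_1(t_n)$, this is a self-referential interpolation condition on $G_1$, which I would solve by a perturbation argument in the spirit of \cite{bbb} and Remark \ref{imp0}: start from $G_1\approx S_2$ (zeros near $\{s_k\}$) and move the zeros $\Lambda_1$ slightly so as to kill $U$. The separation $\dist(s_k,T)\gtrsim|s_k|^{-N}$ from (c) keeps $|S_2(t_n)|$, and hence the coefficients of $\Phi$, under polynomial two-sided control, which is what makes the correction convergent and the perturbation small.

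It then remains to check that $G=G_1G_2$ is a genuine generating function ($G\notin\he$, but $g_\lambda=G/(\,\cdot-\lambda)\in\he$) and that $\{k_\lambda\}_{\lambda\in\Lambda}$ is exact with complete biorthogonal, i.e.\ an $M$-basis; this I would obtain from the same stability estimates, the perturbation being chosen large enough to expel $G$ from $\he$ (securing minimality) yet controlled enough to preserve completeness of the system and of its biorthogonal. Finally, the growth bound is read off from (d): writing $G=G_1G_2$ with $G_2=h/S_2$ and using $|h(iy)|\gtrsim|y|^{-1}|A(iy)|$ together with the two-sided control of $S_2$ on the imaginary axis afforded by (c), one obtains $|G(iy)|\gtrsim|y|^{-N}|A(iy)|$. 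The main obstacle is the third step intertwined with the second: one must solve the self-referential interpolation $U\equiv0$ \emph{exactly} while simultaneously guaranteeing that the perturbed zero set yields a complete and minimal system with complete biorthogonal. Balancing these competing requirements --- a perturbation small enough for convergence of the correction and for the growth estimate, yet decisively moving $G$ out of $\he$ --- is the technical heart of the proof.
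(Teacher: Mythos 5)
Your setup coincides with the paper's: factor $h=G_2S_2$ and $S=S_1S_2$ with $S_2$ a product over (a subsequence of) the common zeros from (c), and obtain $G_1$ by perturbing the $s_k$. But the central mechanism you propose --- viewing $G_1S_1-\Phi=A\cdot U$ as a ``self-referential interpolation'' and moving the zeros $\Lambda_1$ slightly so as to kill $U$ --- is a genuine gap, and it is not how \eqref{main1} is obtained. In the actual proof nothing is solved for: once the residues at $T$ match (which your computation gives) and $G_1$ has controlled growth, the difference
\[
H(z)=\frac{G_1(z)S_1(z)}{A(z)}-\sum_n\frac{b_nG(t_n)}{\mu_n^{1/2}A'(t_n)(z-t_n)}
\]
is entire, lies in the Smirnov class in both half-planes, hence is of zero exponential type by Krein's theorem; since $|G_1(iy)S_1(iy)|\lesssim |yS(iy)|\lesssim |A(iy)|$ while the Cauchy sum is $o(1)$, $H$ is a constant, and $H(is_k)\to 0$ forces $H\equiv 0$. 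So \eqref{main1} holds \emph{automatically}, by complex-analytic rigidity, for any admissible $G_1$; there is no interpolation system to solve, and a zero-moving fixed-point scheme has no mechanism for making an entire function vanish identically. Relatedly, the perturbation cannot be ``slight'': the paper chooses $\tilde s_k$ with $\tilde s_k>10s_{k-1}$ and $s_k>10\tilde s_k$, calibrated so that $\prod_{m\le k}s_m/\tilde s_m\asymp s_k(\beta(s_k))^{1/2}$, where $\beta(y)=\big(\sum_n \mu_n/(t_n^2+y^2)\big)^{1/2}$ majorizes $|F(iy)/A(iy)|$ for unit vectors $F\in\he$. This large but tuned displacement is exactly what yields $|G(is_k)/A(is_k)|\gtrsim(\beta(s_k))^{1/2}\gg\beta(s_k)$, expelling $G$ from $\he$, while still keeping $H(is_k)\to 0$; a genuinely small perturbation leaves $G_1/S_2$ bounded, and then $G$ may well remain in $\he$ and exactness fails.

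Two further steps you wave at are nontrivial and use hypotheses you never invoke. First, $g_\lambda=G/(\cdot-\lambda)\in\he$ is not automatic: the paper splits into the cases $G\in\he+z\he$ and $G\notin\he+z\he$, and in the second case divides $G_1$ by a polynomial $P_m$ built from finitely many $\tilde s_j$, using the separation $\dist(s_k,T)\gtrsim|s_k|^{-N}$ from (c) together with \cite[Theorem 26]{br} to get $G/P_{m+1}\in\he$; your ``stability estimates'' do not address membership at all. Second, completeness of the biorthogonal system is precisely where the hypothesis $\sum_n\mu_n=\infty$ enters: as in Proposition \ref{submu}, one passes to other spectral data $(U,\nu)$, shows the annihilating function $V$ in \eqref{bor2} is at most a polynomial by the same Krein-type argument (this is where the growth bound $|G(iy)|\gtrsim|y|^{-N}|A(iy)|$ is actually needed), and then kills it via $V\in L^2(\nu)$ with $\nu(\mathbb{R})=\infty$. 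Since your proposal nowhere uses $\sum_n\mu_n=\infty$, the claim that the biorthogonal system is complete is unsupported as written.
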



\section{Sufficiency of $(\rm i)$ and $(\rm{ii})$ in Theorem \ref{mainn}}
\label{suff}

In this section we will show that each of the conditions (i) and (ii)
implies that any $M$-basis in the corresponding de Branges space is a strong $M$-basis.
The proofs of these two cases will be essentially different.

Assume that $\he$ does not have strong $M$-basis property.
Then, by Proposition \ref{firstred}, there exist
a partition $\Lambda = \Lambda_1 \cup \Lambda_2$
and a nonzero sequence $\{a_n\} \in \ell^2$ such that for
some entire functions $S_1$ and $S_2$ equations
\eqref{main1}--\eqref{main2} hold with $b_n = a_n$.

Consider the product of the equations \eqref{main1}--\eqref{main2},
\begin{equation}
\label{product}
  A^2(z)\biggl{(}\sum_{n} \frac{\overline{a_n}\mu^{1/2}_n}{z-t_n}\biggr{)}
  \biggr{(}\sum_{n}\frac{a_nG(t_n)}{\mu^{1/2}_nA'(t_n)(z-t_n)}\biggr{)}=G(z)S(z)
\end{equation}
By \eqref{st12}, we have $S(t_n)=A'(t_n)|a_n|^2$ and hence 
\begin{equation}
\label{product1}
  \frac{S(z)}{A(z)}=R(z)+\sum_n\frac{|a_n|^2}{z-t_n},
\end{equation}
where $R$ is an entire function.


\subsection{(i)$\Longrightarrow$ Strong $M$-basis Property. \label{sub1} }
The series
$$
  \sum_n\frac{G(t_n)}{A'(t_n)(z-t_n)}
$$
converges since $\Big\{ \frac{G(t_n)}{\mu^{1/2}_n A'(t_n)t_n}\Big\}
\in\ell^2 $ and $\{\mu_n^{1/2}\} \in \ell^2$.
Let $\lambda_0$ be an arbitrary zero of $G$. From inclusion
$\frac{G(z)}{z-\lambda_0}\in\he$ we deduce that
\begin{equation}
\label{c1}
\frac{G(z)}{A(z)}=c+\sum_n\frac{G(t_n)}{A'(t_n)(z-t_n)}.
\end{equation}
Then we may rewrite \eqref{product} as
\begin{equation}
  \label{Q}
  \sum_n\frac{a_nG(t_n)}{\mu^{1/2}_nA'(t_n)(z-t_n)}
  \cdot \sum_n\frac{\overline{a_n}\mu^{1/2}_n}{z-t_n}=
  \biggl{(}c+\sum_n\frac{G(t_n)}{A'(t_n)(z-t_n)}\biggr{)}
  \cdot\biggl{(}R(z)+\sum_n\frac{|a_n|^2}{z-t_n}\biggr{)}.
\end{equation}

Assume first that $c = 0$ in \eqref{c1}. Let us show that in this case
the system $\{g_\lambda\}_{\lambda\in\Lambda}$ is orthogonal to a
function $B_0\in\he$,
$$
B_0=\sum_n\mu^{1/2}_n(-1)^n\cdot\frac{k_{t_n}}{\|k_{t_n}\|}.
$$
Indeed,
$$
  \biggl{(}\frac{G(z)}{z-\lambda},B_0\biggr{)}=\pi\sum_n\frac{G(t_n)}{A'(t_n)(t_n-\lambda)}=0,\qquad \lambda\in\Lambda,
$$
and hence the system $\{g_\lambda\}$ is not complete. Thus $c\neq0$.

It is a standard fact that if
$\sum_n\frac{|c_n|}{1+|t_n|}<\infty$, then the Cauchy transform
$\sum_n\frac{c_n}{z-t_n}$ is a function of Smirnov class (that is,
a ratio $g/h$ of two bounded analytic functions, where $h$ is
outer) both in the upper half-plane $\mathbb{C}_+$ and in the
lower half-plane $\mathbb{C}_-$ (see, e.g., \cite[Part II, Chapter
1, Section 5]{hj}). We conclude that $S/A$ and, hence, $R$ is in
the Smirnov class both in $\mathbb{C}_+$ and $\mathbb{C}_-$. By
M.G. Krein's theorem (see, e.g., \cite[Part II, Chapter 1]{hj})
$R$ is of zero exponential type. The left-hand side of \eqref{Q}
tends to zero along the imaginary axis. So, $R\equiv 0$. Now we
again use the fact that $\sum_n\mu_n<\infty$ to see that
$$
   \sum_n\frac{\overline{a_n}\mu^{1/2}_n}{iy-t_n}=O(|y|^{-1}), \qquad |y|\to \infty.
$$
By \eqref{product1} and \eqref{c1} we have $1\lesssim|(G/ A)(iy)|$, $|y|^{-1} \lesssim |(S/ A)(iy)|$.
On the other hand,
$$
\sum_n\frac{a_nG(t_n)}{\mu^{1/2}_nA'(t_n)(iy-t_n)}=o(1),
\qquad |y| \to\infty,
$$
that contradicts to equation \eqref{Q},
since the right-hand side is $\asymp |y|^{-1}$, while the left-hand side
is $o(|y|^{-1})$, $|y| \to\infty$.


\subsection{(ii)$\Longrightarrow$ Strong $M$-basis Property.}
The proof consists of three steps.
\smallskip

{\it Step 1.} As in the previous proof, we prove first
(using a different argument) that $R \equiv0$ in \eqref{product1}.
Let $2\varphi$ be a smooth increasing
branch of the argument of $\Theta_E$ on $\rl$ (the phase function
for $E$, see Subsection \ref{debr0}).
By \eqref{use}, 
the derivative
$\varphi'$ is bounded on $\rl$. 

By \cite[Lemma 5.1]{bbb}, $R$ is at most a polynomial.
If $R$ is not identically zero, then it follows from
\eqref{product1} that the zeros $s_n$ of $S$ satisfy
$\dist (s_n,T) \to 0$, $|s_n| \to \infty$. Note, however,
that the zeros of $S_2$ do not depend on the choice
of the spectral data. So for any other spectral data $(\tilde T, \tilde \mu)$
we have $\dist (s_n,\tilde T) \to 0$, $|s_n| \to \infty$.
However, since $\varphi'$ is bounded on $\rl$, we have
$\dist (T, \tilde T) \gtrsim 0$ (recall that $T = \{\Theta = -1\}$
and $\tilde T = \{\Theta = \alpha\}$ for some $\alpha\in \tz$, $\alpha \ne -1$).
We conclude that $R\equiv0$.
\smallskip

{\it Step 2.} The function $S$ vanishes only at the zeros of $\sum |a_n|^2/(\cdot - t_n)$, and hence has only real zeros.
Note also that $|S(iy)| \gtrsim |y|^{-1}|A(iy)|$.
If $S_2$ has only finite number of real zeros, then
$|S_1(iy)| \gtrsim |y|^{-N}|A(iy)|$ for some $N>0$.
This means that $G_1$ is a polynomial (the case excluded by Remark \ref{st14}).

Thus, 
the Cauchy transforms $\sum_n\frac{\overline{a_n}\mu^{1/2}_n}{z-t_n}$
and $\sum_n\frac{|a_n|^2}{z-t_n}$
have common zeros $s_n\in (t_{n-1}, t_n)$
for $n$ in some {\it infinite} set  $\mathcal{N}$.
We will assume that $s_n\to \infty$. 
\smallskip

{\it Step 3.} We have
$$
  \sum_{t_k<t_n}\frac{|a_k|^2}{s_n-t_k}+\frac{|a_n|^2}{s_n-t_n}+
  \sum_{t_k>t_n}\frac{|a_k|^2}{s_n-t_k}=0.
$$
The first sum is greater than $\frac{C}{t_n}$ for some $C>0$.
The last sum is $o(t_n^{-1})$. So,
$\frac{|a_n|^2}{\Delta_n}\geq \frac{C}{t_n}$, where $\Delta_n=t_n-s_n$,
i.e., $\Delta_n\leq \frac{|a_n|^2t_n}{C}$.
On the other hand,
$$
 \sum_{k\neq n}\frac{\overline{a_k}\mu^{1/2}_k}{s_n-t_k}+
 \frac{\overline{a_n}\mu^{1/2}_n}{s_n-t_n}=0.
$$
Furthermore,
$$
  \biggl{|}\sum_{|t_k|\leq t_n, k\neq n}\frac{\overline{a_k}
  \mu^{1/2}_k}{s_n-t_k}\biggr{|}\lesssim\frac{1}{t_n}
  \biggl{(}\sum_{|t_k|\leq t_n}\mu_k\biggr{)}^{1/2}\cdot
  \|\{a_n\}\|_{\ell^2} \lesssim \frac{\mu^{1/2}_n}{t_n},
$$
and
$$
  \biggl{|}\sum_{|t_k|>t_n}\frac{\overline{a_k}\mu^{1/2}_k}
  {s_n-t_k}\biggr{|}\lesssim \sum_{|t_k|>t_n}\frac{\mu^{1/2}_k}{|t_k|}
  \cdot|a_k|\leq\biggl{(}\sum_{|t_k|>t_n}
  \frac{\mu_k}{t^2_k}\biggr{)}^{1/2}\cdot \|\{a_n\}\|_{\ell^2}
  \lesssim \frac{\mu_n^{1/2}}{t_n}.
$$
We conclude that $\frac{|a_n|\mu^{1/2}_n}{\Delta_n}\lesssim\frac{\mu^{1/2}_n}{t_n}$.
On the other hand,
$\frac{\mu^{1/2}_n}{t_n|a_n|}\lesssim\frac{|a_n|\mu^{1/2}_n}{\Delta_n}$
and so $\frac{1}{|a_n|}\lesssim 1$ for infinitely many indices $n$. We come to
a contradiction.
\bigskip


\section{Examples of $M$-bases which are not strong:  case I.}
\label{case1}

\subsection{Proof of Proposition \ref{submu}.}
Let $\{k^\circ_\lambda\}_{\lambda\in \Lambda^\circ}$
be an exact system of reproducing kernels in $\mathcal{H}(E^\circ)$ which is not
a strong $M$-basis and let $G^\circ$ be its generating function.
By Proposition \ref{firstred} and Remark \ref{imp0}
there exists a partition $\Lambda^\circ = \Lambda_1^\circ \cup \Lambda_2^\circ$
such that $\Lambda_1^\circ \cap T^\circ = \emptyset$ and
\begin{equation}
\label{bor}
\begin{aligned}
\frac{G_1^\circ(z)S^\circ_1(z)}{A^\circ(z)} & =
\sum_{t_n\in T^\circ} \frac{b^\circ_n G^\circ(t_n)}{\mu^{1/2}_n (A^\circ)'(t_n)(z-t_n)},
\\
\frac{G_2^\circ(z)S^\circ_2(z)}{A^\circ(z)}  & =
\sum_{t_n\in T^\circ}\frac{\overline{a_n^\circ}\mu^{1/2}_n}{z-t_n}
\end{aligned}
\end{equation}
for some sequences $\{a^\circ_n\}$, $\{b^\circ_n\} \in \ell^2$ and some
entire functions $S^\circ_1$ and $S^\circ_2$. Here we do not exclude the case
when the system $\{g_\lambda^\circ\}_{\lambda\in \Lambda^\circ}$
is incomplete in $\mathcal{H}(E^\circ)$. In this case $G_1^\circ = G$
and $G_2^\circ \equiv 1$.

Let $\he$ be a de Branges space with the spectral data $(T, \mu)$. Then $E=A-iB$,
where $\mathcal{Z}_A = T$, and we can write $A=A^\circ \tilde A$
with $\mathcal{Z}_{\tilde A} = \tilde T =
T \setminus T^\circ$. Define $a_n = a_n^\circ$,
$t_n\in T^\circ$, and $a_n = 0$, $t_n\in \tilde T$, and define $b_n$ analogously.
Then, multiplying equations \eqref{bor} by $\tilde A$ and using the fact that
$A'(t_n) = (A^\circ)'(t_n)\tilde A(t_n)$, $t_n \in T^\circ$, we get
\begin{equation}
\label{bor1}
\begin{aligned}
\frac{G_1^\circ(z)S^\circ_1(z)\tilde A(z)}{A(z)} & =
\sum_{t_n \in T} \frac{b_n G^\circ(t_n)\tilde A(t_n)}
{\mu^{1/2}_n A'(t_n)(z-t_n)},
\\
\frac{G_2^\circ(z)\tilde A(z)S^\circ_2(z)}{A(z)} &  =
\sum_{t_n \in T} \frac{\overline{a_n}\mu^{1/2}_n}{z-t_n}.
\end{aligned}
\end{equation}
Put $G_1 = G_1^\circ$, $G_2 =  G_2^\circ \tilde A$, $G=G_1 G_2$, and $\Lambda =
\Lambda^\circ \cup\tilde T$. Then equations \eqref{bor1}
are of the form \eqref{main1}--\eqref{main2}
for the partition $\Lambda_1 = \Lambda_1^\circ$
and $\Lambda_2 = \Lambda_2^\circ \cup \tilde T$ of $\Lambda$.
Hence the system
$\{g_\lambda\}_{\lambda\in \Lambda_1} \cup \{k_\lambda\}_{\lambda\in \Lambda_2}$
is incomplete.

It remains to show that the system $\{k_\lambda\}_{\lambda\in \Lambda}$
is an $M$-basis in $\he$.
If $\{k_\lambda\}_{\lambda\in \Lambda}$ is incomplete,
then there exists an entire function $H$ such that
$GH = G^\circ \tilde A H \in \he$. Hence, we have
$$
G^\circ(z) \tilde A(z) H(z) = A(z) \sum_n \frac{c_n\mu_n^{1/2}}{z-t_n},
$$
where $\{c_n\} \in \ell^2$ and $c_n=0$ when $t_n\in \tilde T$.
Dividing by $\tilde A$ we have $G^\circ(z) H(z) = A^\circ(z) \sum_{t_n\in T^\circ}
\frac{c_n\mu_n^{1/2}}{z-t_n}$, whence $G^\circ H \in \mathcal{H}(E^\circ)$,
a contradiction with completeness of $\{k^\circ_\lambda\}_{\lambda\in \Lambda^\circ}$.
It is also clear from the above that $G/(\cdot-\lambda) \in \he$ for any
$\lambda\in \Lambda$, so $G$ is the generating function
of an exact system of reproducing kernels in $\he$.

Now assume that the biorthogonal system
$\{g_\lambda\}_{\lambda\in \Lambda}$  is incomplete in $\he$. Let
$(U, \nu)$, $U=\{u_n\}$,
be the spectral data for $\he$ corresponding to the function
$E_\alpha = (\alpha E-E^*)/2$ for some $\alpha\in \tz$, $\alpha \ne -1$.
By the remarks in Subsection \ref{clar}, since $A\notin\he$
and $\mu(\rl) = \infty$, there is no exceptional value $\alpha$
and, thus, $\nu(\rl) = \infty$.

By the arguments from Subsection \ref{reduc} (or from \cite[Section 2]{bb}),
there exists a sequence $\{c_n\} \in \ell^2$ and a nonzero entire function $V$
such that
\begin{equation}
\label{bor2}
\frac{G(z)V(z)}{E_\alpha(z)} = \sum_{n}\frac{c_n G(u_n)}
{\nu^{1/2}_n E_\alpha'(u_n)(z-u_n)}.
\end{equation}
Comparing the residues, we see that $V(u_n) = \nu_n^{-1/2} c_n$ and so
$V\in L^2(\nu)$.
The function $E_\alpha/A$ is in the Smirnov class in $\cp$ and $\mathbb{C}_-$
and $|A(iy)/E_\alpha(iy)| \gtrsim |y|^{-1}$
(note that $\frac{E_\alpha}{A} = \frac{\Theta_E-\alpha}{1+\Theta_E}$).
Since $|G^\circ(iy)|\gtrsim |y|^{-N}|A^\circ(iy)|$, we have
$$
|G(iy)| = |G^\circ(iy)\tilde A(iy)| \gtrsim |y|^{-N} |A(iy)|\gtrsim
|y|^{-N-1} |E_\alpha(iy)|.
$$
By Krein's theorem we conclude
that $V$ is a polynomial. Since $V\in L^2(\nu)$
and $\nu(\rl) = \infty$, we have $V\equiv 0$.
\qed

\begin{remark}
\label{imp1}
{\rm It follows from the proof of Proposition \ref{submu}
that the dimension of the orthogonal complement to the system
$\{g^\circ_\lambda\}_{\lambda\in \Lambda_1^\circ} \cup
\{k^\circ_\lambda\}_{\lambda\in \Lambda_2^\circ}$
in $\mathcal{H}(E^\circ)$
coincides with the dimension of the orthogonal complement to
$\{g_\lambda\}_{\lambda\in \Lambda_1} \cup \{k_\lambda\}_{\lambda\in \Lambda_2}$
in $\he$. Indeed, there is a one-to-one correspondence between
sequences $a_n^\circ = b_n^\circ$ satisfying \eqref{bor}
and sequences $a_n = b_n$ satisfying \eqref{bor1}. }
\end{remark}

\begin{remark}
\label{imp2}
{\rm Note that the condition $|G^\circ(iy)|\gtrsim |y|^{-N}|A^\circ(iy)|$
was used only to prove the completeness of the system biorthogonal
to $\{k_\lambda\}_{\lambda\in \Lambda}$. Both equations
\eqref{main1}--\eqref{main2} for some partition of $\Lambda$
and the completeness of
$\{k_\lambda\}_{\lambda\in \Lambda}$ follow without this assumption. }
\end{remark}


\subsection{(I)$\Longrightarrow$ $\he$ does not have Strong $M$-Basis Property.}
Since $\liminf_{|n|\to\infty}
\mu_n = 0$, we may choose a subsequence $T^\circ = \{t_{n_k}\}$ such that
$\sum_k \mu_{n_k} <\infty$, and let $\mathcal{H}(E^\circ)$
be the de Branges space with the spectral data $(T^\circ, \mu^\circ)$,
$\mu^\circ = \mu|_{T^\circ}$ (thus $\mu^\circ(\rl) < \infty$).

By Theorem~1.1 in \cite{bb} and its proof, there exists an exact system of
reproducing kernels $\{k^\circ_\lambda\}_{\lambda\in \Lambda^\circ}$
in $\mathcal{H}(E^\circ)$
whose generating function $G^\circ$ satisfies
$|G^\circ(iy)|\gtrsim |y|^{-1}|A^\circ(iy)|$ 
and such that its biorthogonal system
$\{g^\circ_\lambda\}_{\lambda\in \Lambda^\circ}$
is incomplete in $\mathcal{H}(E^\circ)$.
For a simplified proof of this (and slightly more general)
statement see Proposition \ref{bior}. 
Then by Proposition \ref{submu} there exists an $M$-basis of
reproducing kernels in $\he$ which is not a strong $M$-basis.
\qed
\bigskip


\section{Cases II, III and IV: construction of common zeros.}
\label{case24}

As it was explained in Subsection \ref{reduc}
we need to construct sequences $\{a_n\}$, $\{b_n\}\in\ell^2$
(or, which is the same, two vectors $h$, $\tilde h$)
and a generating function $G$ of an $M$-basis of reproducing kernels
such that equations \eqref{main1}--\eqref{main2} hold
and $\sum_na_n\overline{b_n}=(h_1,h_2)\neq 0$.

By Proposition \ref{submu}, it suffices to construct
such example for any restriction $\mu^\circ = \mu|_{T^\circ}$, $T^\circ\subset T$, 
with additional restriction \eqref{rest}.

In this section we will make the first step and construct, in each of the cases
(II)--(IV) sequences $\{a_n\}$, $\{b_n\}$
satisfying conditions (a)--(d) from  Subsection \ref{outline} with $T$ replaced by some $\mathcal{N}\subset T$.
In each of the cases, the existence of common zeros
is proved by a certain fixed point argument.


\subsection{Case (II)}

Without loss of generality, we assume that $T$ is a positive sequence.
Passing to a subsequence, assume that $|t_{n_k}|^{-N}\lesssim d_{n_k}
\lesssim |t_{n_k}|k^{-6}$ and $t_{n_{k+1}}>2 t_{n_k}$.

Put $\mathcal{N}=\{n_k\} \cup \{n_k+1\}$.
Let $s_k \in(t_{n_k}, t_{n_{k+1}})$ be such that
$$
\frac{\mu^{1/2}_{n_k}}{\Delta^l_k}=\frac{\mu^{1/2}_{n_{k}+1}}{\Delta^r_k},
\qquad \Delta^l_k=s_k - t_{n_k},\qquad \Delta^r_k=t_{n_{k}+1}-s_k.
$$
We want to choose the coefficients $(a_n)_{n\in \mathcal N}, (b_n)_{n\in \mathcal N}$ in such a way that the Cauchy transforms
$$
\sum_{n\in \mathcal N}\frac{a_n\mu_n^{1/2}}{z-t_n},\qquad
\sum_{n\in \mathcal N}\frac{a_nb_n}{z-t_n}
$$
vanish at the points $s_k$. We assume that
$\mu_{n_k+1}\geq \mu_{n_k}$ (otherwise, the construction should be
modified in an obvious way). Put
$$
  a_{n_k}=\frac{r_k}{k^2},\qquad a_{n_k+1}=\frac{1}{k^2},
$$
$$
  b_{n_k}=\frac{\mu^{1/2}_{n_k}}{\mu^{1/2}_{n_k+1}}\cdot\frac{1}{k^2}=
  \frac{\Delta^l_k}{\Delta^r_kk^2},
  \qquad b_{n_k+1}=\frac{q_k}{k^2}.
$$
The numbers $r_k, q_k\in(1/2,3/2)$ will be our free
parameters. We have a system of equations
$$
\begin{cases}
\frac{r_k\mu^{1/2}_{n_k}}{k^2\Delta^l_k}-\frac{\mu^{1/2}_{n_k+1}}{k^2\Delta^r_k}
+ \sum\limits_{n\neq n_k, n_k+1}\frac{a_n\mu^{1/2}_n}{s_k-t_n} = 0 \\
\frac{r_k}{k^2} \cdot\frac{\Delta^l_k}{\Delta^r_kk^2}\cdot
\frac{1}{\Delta^l_k}-\frac{q_k}{k^2}\cdot\frac{1}{k^2\Delta^r_k}
+ \sum\limits_{n\neq n_k, n_k+1}\frac{a_nb_n}{s_k-t_n} = 0.
\end{cases}
$$

Let $\varepsilon>0$ be a small number to be fixed later on.
Since $\frac{t_{n_k}}{k^5d_{n_k}}\rightarrow\infty$, we can
start with the sequence $\{t_{n_k}\}$ so sparse that
$$
\sum_{l\neq k}\frac{\mu^{1/2}_{n_l}+\mu^{1/2}_{n_l+1}}{|s_k-t_{n_l}|} \lesssim \frac{\varepsilon}{k^5d_{n_k}}.
$$
Furthermore, we can rewrite our system as
$$
\begin{cases}
r_k
+ \frac{\Delta^r_kk^2}{\mu^{1/2}_{n_k+1}} \cdot
\sum\limits_{j\ne k}\bigg(\frac{r_j \mu_{n_j}^{1/2}}{j^2(s_k - t_{n_j})} +
\frac{\mu_{n_j+1}^{1/2}}{j^2(s_k - t_{n_j+1})} \bigg)
=1,
\\
r_k-q_k
+ \Delta^r_kk^4 \sum\limits_{j\ne k}\bigg(\frac{r_j \mu_{n_j}^{1/2}}{j^4\mu_{n_j+1}^{1/2}
(s_k - t_{n_j})} +
\frac{q_j}{j^4 (s_k - t_{n_j+1})} \bigg)
=0.
\end{cases}
$$
The unperturbed system
$$
\begin{cases}
r_k=1,
\\
r_k-q_k=0
\end{cases}
$$
has a unique solution $r_k=q_k=1$. On the other hand,
$$
\frac{\Delta^r_kk^2}{\mu^{1/2}_{n_k+1}}
\sum\limits_{j\ne k}\Bigl(\frac{\mu_{n_j}^{1/2}}{j^2|s_k - t_{n_j}|}+\frac{\mu_{n_j+1}^{1/2}}{j^2|s_k - t_{n_j+1}|}\Bigr)
\lesssim \varepsilon
$$
and
$$
\Delta^r_kk^4 \sum\limits_{j\ne k}\frac1{j^4}\Bigl(\frac{\mu_{n_j}^{1/2}}{\mu_{n_j+1}^{1/2}|s_k - t_{n_j}|} +
\frac{1}{|s_k - t_{n_j+1}|} \Bigr)\lesssim \varepsilon
$$
uniformly with respect to $r_j,q_j\in (1/2,3/2)$.
This means that for sufficiently small $\varepsilon$, our system has a unique solution $r_k,q_k\in (1/2,3/2)$.

Since $a_n$ and $b_n$ are positive, conditions (a), (b) and (d) are clearly satisfied.
It remains to notice that $1\lesssim \mu_n \lesssim t_n^2$
(recall that $\sum \mu_n t_n^{-2} <\infty$). Since $d_{n_k} \ge c|t_{n_k}|^{-N}$
for some $c, \, N>0$, we have $s_k-t_{n_k} \gtrsim s_{k}^{-N-1}$,
the same is true for $t_{n_k+1} - s_k$, whence $\dist(s_k, T) \gtrsim |s_k|^{-N-1}$.
\qed


\subsection{Case (III)}
The previous proof works as soon as we have a subsequence
of intervals $d_{n_k}$ with $d_{n_k} = o(|t_{n_k}|)$
and $d_{n_k} \gtrsim |t_{n_k}|^{-N}$
for some $N>0$. The problem is with the case where all
short intervals are in fact extremely (super-polynomially) short. In this case
we cannot put the common zero in this interval if we want to have a separation condition
$\dist(s_k, T) \gtrsim |s_k|^{-N}$. Therefore, we will choose the common zero
outside this interval and so $a_n$ and $b_n$ cannot be taken positive for all $n$.

By Proposition \ref{submu} we can rarify our sequence $T$. So, without loss of generality,
assume that there exists a sequence of indices $\{n_k\}$
such that $\mu_{n_k}\gtrsim1$, $d_{n_k} \le |t_{n_k}|^{-2}$ and $t_{n_{k+1}} > 2t_{n_k +1}$. Moreover, let $t_{n_k}$ grow super-exponentially,
namely, let $k^6 t_{n_j}< \varepsilon t_{n_k}$ when $j<k$ for some $\varepsilon>0$.
Assume additionally that $\mu_{n_k}\leq\mu_{n_k+1}$.
Then we construct common zeros $s_k$ at the points $t_{n_k}-1$
(if $\mu_{n_k} > \mu_{n_k+1}$, then the points $t_{n_k+1}+1$
should be taken as common zeros).  Put
\begin{equation}
\begin{cases}
a_{n_k}=\frac{r_k}{k^2},\quad a_{n_k+1}=
-\frac{1}{k^2}\cdot\frac{\mu^{1/2}_{n_k}}{\mu^{1/2}_{n_k+1}}(d_{n_k}+1),
\\
b_{n_k}=\frac{\mu^{1/2}_{n_k}}{\mu^{1/2}_{n_k+1}}\cdot\frac{1}{k^2},
\quad b_{n_k+1}=\frac{q_k}{k^2}.
\end{cases}
\label{choose}
\end{equation}
We choose $a_0=b_0=10$. 
Set $\mathcal N = \{0\}\cup \{n_k\}\cup \{n_k+1\}$.
We have a system of equations
$$\sum_{n\in \mathcal N}\frac{a_n\mu^{1/2}_n}{s_k-t_n}=\sum_{n\in \mathcal N}\frac{a_nb_n}{s_k-t_n}=0,$$
and try to find a solution $(r_k, q_k)$ of this system such that $r_k, q_k\in(1/2, 3/2)$.
Rewrite our system using \eqref{choose}:
$$
\begin{cases}
\sum\limits_{l<n_k}\frac{a_l\mu^{1/2}_l}{s_k-t_l}-\frac{r_k}{k^2}\mu^{1/2}_{n_k}+\frac{1}{k^2}\mu^{1/2}_{n_k}+
\sum\limits_{l>n_k+1}\frac{a_l\mu^{1/2}_l}{s_k-t_l}=0\\
\sum\limits_{l<n_k}\frac{a_lb_l}{s_k-t_l}-\frac{r_k}{k^4}\cdot\frac{\mu^{1/2}_{n_k}}{\mu^{1/2}_{n_k+1}}+\frac{q_k}{k^4}\cdot\frac{\mu^{1/2}_{n_k}}{\mu^{1/2}_{n_k+1}}+
\sum\limits_{l>n_k+1}\frac{a_lb_l}{s_k-t_l}=0,
\end{cases}
$$
or, equivalently,
\begin{equation}
\begin{cases}
 r_k=1+\frac{k^2}{\mu^{1/2}_{n_k}} \cdot \Bigl(
\sum\limits_{j\neq k}
\frac{r_j}{j^2}\frac{\mu^{1/2}_{n_j}}{s_k-t_{n_j}} -
\sum\limits_{j\neq k}\frac{1}{j^2}
\frac{\mu^{1/2}_{n_j}(d_{n_j}+1)}{s_k-t_{n_j+1}}+\frac{a_0\mu^{1/2}_0}{s_k-t_0}\Bigr) 
\\
%
%
r_k-q_k=\frac{k^4\mu^{1/2}_{n_k+1}}{\mu^{1/2}_{n_k}} \cdot
\Bigl(
\sum\limits_{j\neq k}\frac{r_j}{j^4}
\frac{\mu^{1/2}_{n_j}}{\mu^{1/2}_{n_j+1}(s_k-t_{n_j})}-
\sum\limits_{j\neq k}\frac{q_j}{j^4}\frac{\mu^{1/2}_{n_j}(d_{n_j}+1)}
{\mu^{1/2}_{n_j+1}(s_k-t_{n_j +1})}+\frac{a_0b_0}{s_k-t_0}\Bigr) 
\label{rsystem}
\end{cases}
\end{equation}
where all the sums should be thought of as small perturbations
of the block linear system $r_k=1$, $r_k-q_k =0$. 
The terms with $l=0$ 
can be made arbitrary small by passing to sufficiently sparse $\{n_k\}$ 
since $\mu^{1/2}_n=o(t_n)$.
In addition we can assume that $\frac{k^4\mu^{1/2}_{n_k+1}}{t_{n_k}}<2^{-k}\varepsilon \min_{j<k}\mu^{-1/2}_{n_j+1}$.

Next we want to estimate coefficients at $r_j$ in
the first equation in \eqref{rsystem}.  If $j<k$ then
$$
\frac{k^2}{\mu^{1/2}_{n_k}}\cdot\frac{\mu^{1/2}_{n_j}}{|s_k-t_{n_j}|}
\lesssim\frac{k^2\mu^{1/2}_{n_j}}{s_k}\lesssim\frac{k^2t_{n_j}}{t_{n_k}}\lesssim \frac{\varepsilon}{k^4}.
$$
If $j>k$ then
$$
\frac{k^2}{\mu^{1/2}_{n_k}}\cdot\frac{\mu^{1/2}_{n_j}}{|s_k-t_{n_j}|}
\lesssim\frac{k^2\mu^{1/2}_{n_j}}{t_{n_j}}<\frac{j^2\mu^{1/2}_{n_j}}{t_{n_j}}
\lesssim\varepsilon 2^{-j}.
$$
Analogous estimates hold for coefficients at $r_j$ and $q_j$
in the second equation in \eqref{rsystem}
and  for the sum $\frac{k^2}{\mu^{1/2}_{n_k}}\cdot
\sum_{j\neq k}\frac{1}{j^2}\frac{\mu^{1/2}_{n_j}(d_{n_j}+1)}{s_k-t_{n_j+1}}$.
Thus, if we define $A_{kj}$ as the coefficient at $r_j$
in the first equation in \eqref{rsystem}
and  $B_{kj}$ and $C_{kj}$ as the coefficients at $r_j$ and $q_j$
in the second equation in \eqref{rsystem},
then we have $\sum_{k,j: k\ne j} |A_{kj}|+ |B_{kj}|+ |C_{kj}| \lesssim\varepsilon$.
The unperturbed system has the solution $r_k=q_k=1$.
Hence, for sufficiently small $\varepsilon$ we can find the solutions of the perturbed system
$r_k, q_k \in (1/2,3/2)$.
As a consequence, 
$\sum_{n\in\mathcal N} a_nb_n>0$.

It remains to show that the function $h$ satisfies property (d).
This is a nontrivial matter since now $a_n$ change the sign.
\medskip

{\it Proof of} (d). 
By the construction of $a_{n_k}$
and $a_{n_k+1}$ we have
$$
D_k := a_{n_k}\mu_{n_k}^{1/2} + a_{n_k+1}\mu_{n_k+1}^{1/2}
= \frac{\mu_{n_k}^{1/2}}{k^2} (r_k-1-d_{n_k})
=
{\sum}_{l\neq n_k, n_k+1}\frac{a_l\mu^{1/2}_l}{s_k-t_l}
- \frac{\mu_{n_k}^{1/2}}{k^2}  d_{n_k}.
$$
Choosing the sequence $t_{n_k}$ sufficiently sparse
we can make the value of $D_k$ 
as small as we wish
(uniformly for all values of $r_k$ and $q_k \in (1/2, 3/2)$.
Thus, the series $\sum_k D_k$ converges absolutely
and its sum can be made as small as we wish.
Thus, we can assume that 
\begin{equation}
\label{ddd}
a_0\mu_0^{1/2} + \sum_k D_k 
>0.
\end{equation}

We have
$$
\begin{aligned}
\frac{h(iy)}{A(iy)} & = \frac{a_0\mu_0^{1/2}}{iy-t_0}
+ \sum_{k}\Bigl(\frac{a_{n_k}\mu_{n_k}^{1/2}}{iy-t_{n_k}} +
\frac{a_{n_k+1}\mu_{n_k+1}^{1/2}}{iy-t_{n_k+1}}  \Bigr) 
\\
& =
\frac{a_0\mu_0^{1/2}}{iy-t_0}
+ \sum_{k} \frac{D_k}{iy-t_{n_k}} +
\sum_{k}
\frac{a_{n_k+1}\mu_{n_k+1}^{1/2}(t_{n_k+1} -t_{n_k})}{(iy-t_{n_k+1})(iy-t_{n_k+1})}.
\end{aligned}
$$
Since  $d_{n_k} = t_{n_k+1} -t_{n_k} \le  |t_{n_k}|^{-2}$ 
and $\mu_{n_k+1} \lesssim t_{n_k}^2$, we conclude that
$$
\sum_{k}
\frac{a_{n_k+1}\mu_{n_k+1}^{1/2}(t_{n_k+1} -t_{n_k})}{(iy-t_{n_k+1})(iy-t_{n_k+1})} =
O\Big(\frac{1}{y^2}\Big), \qquad |y| \to \infty,
$$
and so, by \eqref{ddd}, $|h(iy)|\gtrsim |y|^{-1}|A(iy)|$.
\qed

\subsection{Case (IV)}
Without loss of generality we assume that $T$ is a positive lacunary sequence.
We know that \eqref{mulacunary} does not hold.
Consider the case when there exists a subsequence
of indices $\{n_k\}$ such that $\sum_{l<n_k}\mu_l\gtrsim \mu_{n_k}$.
We choose rapidly increasing sequences of indices $\{n_k\}$ and $\{m_k\}$ such that
\begin{equation}
\label{nkmk}
n_{k-1}<m_k<n_k,\qquad \sum_{l=m_k}^{n_k-1}\mu_l>k^6\mu_{n_k}.
\end{equation}
Put
$$
a_l=\mu^{1/2}_l\cdot\biggl{(}\sum_{p=m_k}^{n_k-1}\mu_p\biggr{)}^{-{1/2}}\cdot\frac{1}{k},\quad m_k\leq l\leq n_k-1,
\qquad a_{n_k}=\frac{1}{k^2}.
$$
Set $\mathcal N=\cup_k[m_k,n_k]$. 
We can start with $\{n_k\}$, $\{m_k\}$
increasing  so fast 
that
\begin{equation}
\sum_{n<m_k,\, n\in\mathcal N}a_n\mu^{1/2}_n=o\Bigl(\frac{1}{k}\cdot
\Bigl(\sum_{l=m_k}^{n_k-1}\mu_l\Bigr)^{1/2}\Bigr),\quad
\sum_{n>n_k,\, n\in\mathcal N}\frac{a_n\mu^{1/2}_n}{t_n}=o\Bigl( \frac{1}{kt_{n_k}}\Bigr),\qquad k\to\infty.
\label{smallness}
\end{equation}
Let $s_k$ be the  unique root of $\sum_n\frac{a_n\mu^{1/2}_n}{z-t_n}$
in $(t_{n_k-1},t_{n_k})$. Then
\begin{equation}
\sum_{n<m_k,\, n\in\mathcal N}\frac{a_n\mu^{1/2}_n}{s_k-t_n}
+\sum_{l=m_k}^{n_k-1}\frac{a_l\mu^{1/2}_l}{s_k-t_l}+
\frac{\mu^{1/2}_{n_k}}{k^2(s_k-t_{n_k})}+
\sum_{n>n_k,\, n\in\mathcal N}\frac{a_n\mu^{1/2}_n}{s_k-t_n}=0.
\label{st24}
\end{equation}
From this we conclude that $\Delta_{n_k}=t_{n_k}-s_k$ satisfies the estimate
\begin{equation}
  \frac{\mu^{1/2}_{n_k}}{k^2\Delta_{n_k}}\geq \frac{1}{t_{n_k}}
  \sum_{l=m_k}^{n_k-1}a_l\mu^{1/2}_l+o\Bigl(\frac1{kt_{n_k}}\Bigr)=
  \frac{1+o(1)}{kt_{n_k}}\biggl{(}\sum_{l=m_k}^{n_k-1}\mu_l\biggr{)}^{1/2},\qquad k\to\infty.
\label{st24a}
\end{equation}
Using \eqref{nkmk} we obtain that $\Delta_{n_k}=o(t_{n_k})$, and applying \eqref{st24} again and the estimate $\inf_n\frac{d_n}{t_n}>0$ we get
the estimate reverse to \eqref{st24a}. Hence,
$\Delta_{n_k}\asymp\frac{\mu^{1/2}_{n_k}}{k}\cdot\biggl{(}
\sum_{l=m_k}^{n_k-1}\mu_l\biggr{)}^{-{1/2}}\cdot t_{n_k}$.
It remains to find a positive sequence $\{b_n\}_{n\in\mathcal N}\in\ell^2$
such that $\sum_{n\in\mathcal N}\frac{a_nb_n}{z-t_n}$ vanishes at the points $s_k$.
We fix small positive $b_n$, $n\notin\mathcal{N}_0=\{n_k\}$, so that
$\sum_{n\in\mathcal{N}\setminus \mathcal{N}_0}b_n<\infty$.
The parameters $b_{n_k}$ should satisfy the system of equations
\begin{equation}
\frac{-a_{n_k}b_{n_k}}{\Delta_{n_k}}+\sum_{j\neq k}
\frac{a_{n_j}b_{n_j}}{s_k-t_{n_j}}+\sum_{n\in\mathcal{N}\setminus \mathcal{N}_0}\frac{a_nb_n}{s_k-t_n}=0.
\label{bsystem}
\end{equation}
Put $u_k=k^2\Delta_{n_k}\sum_{n\in\mathcal{N}\setminus \mathcal{N}_0}\frac{a_nb_n}{s_k-t_n}$
and rewrite our system as
$$
  b_{n_k}-k^2\Delta_{n_k}\sum_{j\neq k}\frac{1}{j^2(s_k-t_{n_j})}b_{n_j}=u_k.
$$
Let us show that $\{u_k\} \in \ell^2$. Indeed,
$\sum_{n\in\mathcal{N}\setminus \mathcal{N}_0}\frac{a_nb_n}{s_k-t_n}\lesssim \frac{1}{t_{n_k}}$
and
$$
|u_k|\lesssim \frac{k^2\Delta_{n_k}}{t_{n_k}}\asymp k\mu^{1/2}_{n_k}
\cdot\bigg(\sum_{l=m_k}^{n_k-1}\mu_l \bigg)^{-{1/2}}\lesssim\frac{1}{k}.
$$
Next consider the coefficients at $b_{n_j}$. If $j>k$,
then $t_{n_j}\gg s_k\gg k^2\Delta_{n_k}$. On the other hand,
if $j<k$, then $\frac{k^2\Delta_{n_k}}{|s_k-t_{n_j}|}\lesssim\frac{1}{k^2}$.
So, we can assume that
$$
\sum_kk^2\Delta_{n_k}\sum_{j\neq k}\frac{1}{j^2(s_k-t_{n_j})}<\frac{1}{100}.
$$
This means that we can find a summable solution
$\{b_{n_k}\}$ of equations \eqref{bsystem}.
Clearly, $\{a_n\}$ and $\{b_n\}$ satisfy conditions (a)--(d).

The case when there exists a sequence of indices
$\{n_k\}$ such that $\mu_{n_k}=o( t^2_{n_k}\sum_{l>n_k}\frac{\mu_l}{t^2_l})$, $k\to\infty$,
can be treated analogously.
\qed
\bigskip


\section{Proof of Proposition \ref{cod}}
\label{proofcod}

First, note that any function $F\in\he$ is of the form
$F(z)= A(z) \sum_n \frac{a_n\mu_n^{1/2}}{z-t_n}$, where $a= \{a_n\}\in\ell^2$,
whence
\begin{equation}
\label{cons1}
\bigg|\frac{F(iy)}{A(iy)}\bigg| \le
\|a\|_{\ell^2} \beta(y), \qquad
\beta(y) := \bigg(\sum_n \frac{\mu_n}{t_n^2+y^2}\bigg)^{1/2}.
\end{equation}

Now let the functions $h$ and $S$ in (\ref{dv}) have infinitely many common zeros
$\{s_k\}$ which admit power-type separation from $T$,
$\dist (s_k, T)\gtrsim |s_k|^{-N}$ for some $N>0$.
We will factorize $h$ and $S$ as
$h=G_2S_2$ and $S=S_1S_2$, where $S_2$ is a product over some
subsequence of common zeros, and then will construct
$G_1$ as a small perturbation of $S_2$.

Let us construct zeros $s_k$ of $S_2$ and $\tilde s_k = s_{k-1} +\delta_k$
of $G_1$ inductively. Assume that $s_1=\tilde s_1$, $\tilde s_2, s_2$, \dots,
$\tilde s_{k-1}$, $s_{k-1}$ are already constructed.
Choose $s_k$ (a common zero of $h$ and $S$) so large that if we define $\delta_k$ by
$$
\delta_k = \frac{1}{(\beta(s_k))^{1/2}}
\prod_{m=1}^{k-1}\frac{s_m}{\tilde s_m} - s_{k-1},
$$
then
$$
\tilde s_k = s_{k-1}  +\delta_k > 10 s_{k-1} \qquad
\text{and} \qquad s_k >10\tilde s_k.
$$
This is possible since $\beta(y) \to 0$, $|y| \to\infty$, but, at the same time,
$\beta(y) \gtrsim |y|^{-1}$.

Now define
$$
S_2(z) = \prod_k \bigg(1-\frac{z}{s_k}\bigg), \qquad G_1(z) = \prod_k \bigg(1-\frac{z}
{\tilde s_k}\bigg),
$$
and $G=G_1G_2$. It is clear that $|S_2(iy)| \le |G_1(iy)| \lesssim |yS_2(iy)|$, $|y|\to\infty$.

Assume first that $G \in \he+z\he$. We need to show that
$G$ is a generating function of an exact system (in particular, $G\notin \he$)
and that
\begin{equation}
\label{cons2}
\frac{G_1(z)S_1(z)}{A(z)} = \sum_n\frac{b_nG(t_n)}{\mu^{1/2}_nA'(t_n)(z-t_n)}.
\end{equation}
An obvious estimate of infinite products shows that
$$
\bigg|\frac{G_1(is_k)}{S_2(is_k)}\bigg| \asymp \prod_{m=1}^k \frac{s_m}{\tilde s_m}
\asymp s_k (\beta(s_k))^{1/2}
$$
by the construction of $\delta_k$.
By the property (d) from Subsection \ref{outline}, $|G_2(iy)S_2(iy)| =|h(iy)|
\gtrsim |y|^{-1} |A(iy)|$.
Hence,
$$
\bigg|\frac{G(is_k)}{A(is_k)}\bigg| \asymp
\bigg|\frac{G_1(is_k)}{S_2(is_k)}\bigg| \cdot \bigg|\frac{G_2(is_k)S_2(is_k)}{A(is_k)}\bigg|
\gtrsim \frac{1}{s_k} \bigg|\frac{G_1(is_k)}{S_2(is_k)}\bigg| \asymp
(\beta(s_k))^{1/2},
$$
whence $G\notin \he$.  Furthermore,
$|G(iy)| = |h(iy)G_1(iy)|/|S_2(iy)| \gtrsim |y|^{-1}|A(iy)|$, whence
$G$ is the generating system of some exact system of kernels.
The completeness of the biorthogonal system $\{g_\lambda\}_{\lambda\in
\mathcal{Z}_G}$ follows by the same arguments as in
the proof of Proposition \ref{submu}. If $(U, \nu)$ are some other spectral data
for $\he$ and $V$ is the entire function from \eqref{bor2}, then
$V$ is at most polynomial whenever $|G(iy)| \gtrsim |y|^{-N}|A(iy)|$.
Thus $V \equiv 0$, since $V\in L^2(\nu)$ and $\nu(\rl) = \infty$
by the discussion in Subsection \ref{clar}.

To get (\ref{cons2}) note first that
$$
H(z) =
\frac{G_1(z)S_1(z)}{A(z)} - \sum_n\frac{b_nG(t_n)}{\mu^{1/2}_nA'(t_n)(z-t_n)}
$$
is an entire function, since the residues at the points $t_n$
coincide. Both summands are in the Smirnov class in $\cp$
and $\cm$. Also $|G_1(iy)S_1(iy)|\lesssim |yS(iy)|\lesssim |A(iy)|$,
while the sum in the last formula at $z=iy$ is $o(1)$, $|y|\to\infty$.
By Krein's theorem we conclude that $H$ is of zero exponential type
and, finally, a constant. However,
$$
\bigg|\frac{G_1(is_k)S_1(is_k)}{A(is_k)}\bigg| \asymp
\bigg|\frac{S(is_k)}{A(is_k)}\bigg| \cdot \bigg|\frac{G_1(is_k)}{S_2(is_k)}\bigg| \lesssim (\beta(s_k))^{1/2} \to 0, \qquad s_k\to\infty,
$$
and hence $H\equiv 0$.

Now assume that $G$ does not belong to $\he + z\he$.
Recall that $\dist (s_k, T) \gtrsim |s_k|^{-N}$ for some $N>0$.
Then a simple estimate $|G_1(t)/S_2(t)| \lesssim |t|^2\,
{\rm dist}\, (t, \{s_k\})^{-1}$ implies that
$$
|G(t_n)|=\frac{|h(t_n)G_1(t_n)|}{|S_2(t_n)|} \lesssim |h(t_n)| (|t_n|+1)^{N+2},
$$
whence
$$
\bigg\{\frac{G(t_n)}{(|t_n| +1)^{N+2} E(t_n)}\bigg\} \in L^2(\mu).
$$
Thus choosing $m$ zeros $\tilde s_1, \dots, \tilde s_m$ of $G_1$
and setting $P_m (z) = (z-\tilde s_1)\dots (z-\tilde s_m)$
we conclude by \cite[Theorem 26]{br} that
$G/ P_m \in \he$ when $m$ is sufficiently large.
Take the smallest $m$ such that $G/P_m\notin\he$, but
$G/P_{m+1}\in \he$. Then $\tilde G = G/P_m = G_2 G_1/P_m$ will be the
required generating function satisfying (\ref{cons2})
with $G_1/P_m$ in place of $G_1$.
The completeness of $\{k_\lambda\}_{\lambda \in \mathcal{Z}_{\tilde G}}$
and of its biorthogonal family
follow from the fact that $|\tilde G(iy)| \gtrsim |y|^{-m-1}|A(iy)|$,
$|y| \to \infty$.
\qed
\bigskip


\section{De Branges spaces with strong $M$-basis property as Fock-type spaces}
\label{fock}

In this section we prove Theorem \ref{fock1}.
The implication (i)$\Longrightarrow$(ii) is obvious.


\subsection{Implication (ii)$\Longrightarrow$(iii)}
We will use the invertibility of $R_{\pi/2}$.
For other values of $\theta$ the proof is analogous.
We know that $A(iz)\in\he$. So, $iT=\{it_n\}$ satisfies the Blaschke
condition $\sum_n(1+|t_n|)^{-1}<\infty$. Moreover,
$\{k_\lambda\}_{\lambda\in iT}$ is a Riesz basis of reproducing
kernels in $\he$ and hence in the corresponding model space.
Therefore, $iT$ is lacunary in both
half-planes $\mathbb{C}_+$ and $\mathbb{C}_-$ (see, e.g., \cite[Lemma D.4.4.2]{nik}).

We may assume that $A(z)=\prod_n\bigl( 1-\frac{z}{t_n} \bigr)$
and $|t_n|>2$. Furthermore,
\begin{equation}
\label{Fock}
 \frac{\pi^2}{\mu_n}=\Bigl{\|}\frac{A(z)}{z-t_n}
 \Bigr{\|}^2\asymp\Bigl\|R_{\pi/2}\Bigl(
 \frac{A(z)}{z-t_n}\Bigr)\Bigr\|^2=\pi^2\sum_k
 \frac{|A(it_k)|^2}{|A'(t_k)|^2\mu_k|it_k-t_n|^2}.
\end{equation}
Next we estimate $\frac{A(it_k)}{A'(t_k)}$, $t_k\in[2^K, 2^{K+1}]$:
$$
\biggl{|}\frac{A(it_k)}{A'(t_k)}\biggr{|}^2=2|t_k|^2
\prod_{l\neq k}\biggl{|}\frac{1-it_k/t_l}{1-t_k/t_l}\biggr{|}^2
\gtrsim |t_k|^2
$$
because  $\sup_{s>0}\#\{t_n:  2^s\leq|t_n|<2^{s+1}\}<\infty$.

From \eqref{Fock} we get
$$
 \frac{1}{\mu_n}\gtrsim \sum_k\frac{|t_k|^2}{\mu_k|t_k-it_n|^2}
 \asymp\sum_{|t_k|> |t_n|}\frac{1}{\mu_k}
 + \frac{1}{|t_n|^2}\sum_{|t_k|\leq|t_n|}\frac{|t_k|^2}{\mu_k}.
$$
So, $\frac{1}{\mu_n}\gtrsim
{\sum}_{|t_k|>|t_n|}\frac{1}{\mu_k}$.
Now we make use of an elementary lemma from \cite{BMS}:

\begin{lemma}\textup(\cite[Lemma 5.9]{BMS}\textup)
\label{simplefacts}
Let $\{c_n\}$ be a sequence of positive numbers.
\begin{itemize}
\item [(i)] If there is a constant $C$ such that $\sum_{m=1}^{n-1} c_m \le C c_n$
for $n>1$, then there is a positive constant $\delta$ such that $
c_m/c_n \ge c 2^{\delta (m-n)}$ whenever $m>n$.
\item [(ii)] If there is a constant $C$ such that $\sum_{m=n+1}^{\infty} c_m
\leq C c_n$ for every positive integer $n$, then there is a positive
constant $\delta$ such that $c_m/c_n \le c 2^{-\delta (m-n)}$
whenever $m>n$.
\end{itemize}
\end{lemma}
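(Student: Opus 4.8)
The plan is to treat the two assertions in parallel, since they are dual: (i) concerns geometric growth controlled by the partial sums, while (ii) concerns geometric decay controlled by the tails. In each case the essential step is to convert the cumulative hypothesis into a one-step multiplicative recursion; once that is done, the pointwise bound on $c_m/c_n$ follows by comparing each $c_n$ with the relevant partial sum or tail. I expect no genuine obstacle here --- the only thing to notice is the telescoping that turns the hypothesis into a recursion.

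For (i) I would set $S_n=\sum_{m=1}^{n}c_m$ and rewrite the hypothesis $S_{n-1}\le Cc_n=C(S_n-S_{n-1})$ as $S_n\ge\rho S_{n-1}$ with $\rho:=\frac{C+1}{C}>1$, valid for $n>1$. Iterating gives $S_m\ge\rho^{\,m-n}S_n$ for $m\ge n$. Combining the trivial bound $c_n\le S_n$ with $c_m=S_m-S_{m-1}\ge(1-\rho^{-1})S_m=\frac{1}{C+1}S_m$ then yields, for $m>n$,
\[
c_m\ge\frac{1}{C+1}S_m\ge\frac{\rho^{\,m-n}}{C+1}S_n\ge\frac{\rho^{\,m-n}}{C+1}c_n,
\]
which is exactly the claimed inequality with $\delta=\log_2\rho$ and $c=\frac{1}{C+1}$.

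Part (ii) I would obtain by the dual argument, now using the tails $R_n=\sum_{m=n+1}^{\infty}c_m$, which are finite by hypothesis. Since $c_n=R_{n-1}-R_n$, the hypothesis $R_n\le Cc_n$ becomes $R_n\le\sigma R_{n-1}$ with $\sigma:=\frac{C}{C+1}<1$, whence $R_{m-1}\le\sigma^{\,m-n}R_{n-1}$ for $m\ge n$. Using $c_m\le R_{m-1}$ together with $c_n\ge R_{n-1}-R_n\ge(1-\sigma)R_{n-1}=\frac{1}{C+1}R_{n-1}$ gives, for $m>n$,
\[
c_m\le R_{m-1}\le\sigma^{\,m-n}R_{n-1}\le(C+1)\,\sigma^{\,m-n}c_n,
\]
i.e. the desired bound with $\delta=\log_2\frac{C+1}{C}$ and $c=C+1$. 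The two parts literally transform into one another upon passing from partial sums to tails (and the value of $\delta$ is in fact the same, $\log_2\frac{C+1}{C}$, in both), so once (i) is written out, (ii) requires no new idea.
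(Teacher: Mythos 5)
Your proof is correct in both parts: the telescoping identities $c_n=S_n-S_{n-1}$ and $c_n=R_{n-1}-R_n$ turn the hypotheses into the one-step recursions $S_n\ge\frac{C+1}{C}S_{n-1}$ and $R_n\le\frac{C}{C+1}R_{n-1}$, and the comparisons $c_n\le S_n$, $c_m\ge\frac{1}{C+1}S_m$ (resp.\ $c_m\le R_{m-1}$, $c_n\ge\frac{1}{C+1}R_{n-1}$) are all valid for the index ranges used. The paper itself gives no proof of this statement---it quotes the lemma verbatim from \cite[Lemma 5.9]{BMS}---so there is nothing internal to compare against; your argument is the standard (essentially the only) route, with the added benefit of explicit constants $\delta=\log_2\frac{C+1}{C}$ and $c=(C+1)^{\mp1}$.
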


Applying Lemma \ref{simplefacts} we conclude that
$\mu_n\le C2^{-\delta(m-n)}\mu_m$, $m>n$, for some
small $\delta>0$. Analogously, $\frac{\mu_m}{|t_m|^2}\le C2^{-\delta(m-n)}\frac{\mu_n}{|t_n|^2}$, $m>n$.
This gives us condition \eqref{mulacunary}.


\subsection{Implication (iii)$\Longrightarrow$(i)}
For simplicity we will assume that $\{t_n\}$ is a one-sided positive sequence. Put
$$
\Omega_1=\biggl{\{}z:|z|\le  \frac{t_1+t_2}{2} \biggr{\}},\quad\Omega_n=
\biggl{\{}z: \frac{t_{n-1}+t_n}{2} < |z|\leq \frac{t_n+t_{n+1}}{2} \biggr{\}}.
$$
For $z\in\Omega_n$ we can estimate $A(z)$:
\begin{equation*}
|A(z)|\asymp\frac{|z|^{n-1}}{\prod_{k=1}^{n-1}t_k}\cdot\frac{|z-t_n|}{t_n}.
\end{equation*}
Let $\varepsilon$ be a sufficiently small positive number. Put
$$
\varphi(r)=2\log|A(ir)|+\log\mu_n,\quad r\in\cup_n[(1-\varepsilon)t_n,
(1+\varepsilon)t_n],
$$
and define $\phi(r) = \infty$ for other values of $r$.

The polynomials are dense in the Fock spaces. Since they belong to $\he$, 
we need only to verify that
$\|F\|_{L^2(\nu)} \asymp \|F\|_E$, $F\in\he$, with $\nu=e^{-\varphi(|z|)}dm(z)$.
First we check that
$\nu$
is a Carleson measure for $\he$, that is,
$\|F\|_{L^2(\nu)} \lesssim \|F\|_E$, $F\in\he$.
We 
normalize our measure to pass to an equivalent
problem for the space $\mathcal{H}(T,\mu)$ from \cite{BMS} (see also Subsection \ref{alter}):
$\nu_d(z)=|A(z)|^2\nu$.  We conclude from \cite[Theorem 1.1]{BMS}
that $\nu_d$ is a Carleson measure for $\mathcal{H}(T,\mu)$. Indeed,
$$
\nu_d(\Omega_n)\asymp\frac{|t_n|^2}{\mu_n},\quad \int_{\Omega_n}\frac{d\nu_d(z)}{|z|^2}\asymp\frac{1}{\mu_n}.
$$
The sequences $\frac{t^2_n}{\mu_n}$ and $\mu_n$ grow
exponentially and we obtain the result.

For the reader's convenience
we repeat the arguments from \cite{BMS} here.
Let $f(z) = A(z)\sum_n\frac{a_n\mu^{1/2}_n}{z-t_n}$
be an arbitrary function from $\he$, $\|f\|^2_{\he}=\pi^2\sum_n|a_n|^2$.
We have to verify that $\int_\mathbb{C}|f(z)|^2e^{-\varphi(|z|)}dm(z)
\lesssim \sum_n|a_n|^2$. First,
$$
\begin{aligned}
\int_{\Omega_n}\biggl{|}\sum_k\frac{a_k\mu^{1/2}_k}{z-t_k}\biggr{|}^2d\nu_d(z)
&\lesssim
\int_{\Omega_n}\frac{d\nu_d(z)}{|z|^2}\biggl{(}\sum_{k< n}|a_k| \mu^{1/2}_k \biggr{)}^2+
|a_n|^2+\nu_d(\Omega_n)\biggl{(}\sum_{k>n}\frac{|a_k|\mu^{1/2}_k}{t_k}\biggr{)}^2 \\
& \lesssim \frac{1}{\mu_n}\biggl{(}\sum_{k< n}|a_k| \mu^{1/2}_k \biggr{)}^2+
|a_n|^2+\frac{t^2_n}{\mu_n}\biggl{(}\sum_{k>n}\frac{|a_k|\mu^{1/2}_k}{t_k}\biggr{)}^2.
\end{aligned}
$$
Hence,
$$
\int_\mathbb{C}|f(z)|^2e^{-\varphi(|z|)}dm(z)=\sum_n\int_{\Omega_n}\biggl{|}\sum_k\frac{a_k\mu^{1/2}_k}{z-t_k}\biggr{|}^2d\nu_d(z)
$$
$$
\lesssim  \sum_n\frac{1}{\mu_n}\biggl{(}\sum_{k< n}|a_k| \mu^{1/2}_k \biggr{)}^2+\sum_n|a_n|^2+\sum_n\frac{t^2_n}{\mu_n}\biggl{(}\sum_{k>n}\frac{|a_k|\mu^{1/2}_k}{t^2_k}\biggr{)}^2.
$$
Next we show that the first summand is bounded by $C\|a\|^2$
(the estimate for the last summand can be proved analogously). Indeed, H\"older's inequality gives us that  
$$
\sum_n\frac{1}{\mu_n}\Bigl(\sum_{k< n}|a_k| \mu^{1/2}_k \Bigr)^2 \le 
\sum_n\frac{1}{\mu_n}\Bigl(\sum_{k< n}|a_k|^2 \mu^{1/2}_k \Bigr)\Bigl(\sum_{k< n} \mu^{1/2}_k \Bigr) \lesssim 
\sum_k |a_k|^2 \sum_{n>k} \frac{\mu^{1/2}_k}{\mu^{1/2}_n} \lesssim  \|a\|^2.
$$

Next we verify that $\nu$ is a reverse Carleson measure for $\he$, that is,
$\|F\|_{L^2(\nu)} \gtrsim \|F\|_E$, $F\in\he$.
We need to prove that $\int_\mathbb{C}|f(z)|^2e^{-\varphi(|z|)}dm(z)
\gtrsim \sum_n|a_n|^2$. Put
$D_n=\{z: \varepsilon t_n/2 \leq|z-t_n|\leq \varepsilon t_n\}$.  We use the inequalities
\begin{gather*}
\int_{D_n}|f(z)|^2e^{-\varphi(|z|)}dm(z)\gtrsim\frac{\varepsilon^2}{\mu_n}\int_{D_n}\biggl{|}\sum_k\frac{a_k\mu^{1/2}_k}{z-t_k}\biggr{|}^2dm(z)
\\
\geq \frac{\varepsilon^2}{\mu_n}\int_{D_n}\biggl{[}\frac{\mu_n}{t^2_n\varepsilon^2}|a_n|^2-\frac{C}{|z|^2}\biggl{(}\sum_{k<n}|a_k|\mu_k^{1/2}\biggr{)}^2-C\biggl{(}\sum_{k>n}\frac{|a_k|\mu^{1/2}_k}{t_k}\biggr{)}^2\biggr{]}dm(z)
\\
\geq \varepsilon^4\biggl{[}\frac{1}{\varepsilon^2}|a_n|^2-\frac{C}{\mu_n}\biggl{(}\sum_{k<n}|a_k|\mu^{1/2}_k\biggr{)}^2-C\frac{t_n^2}{\mu_n}\biggl{(}\sum_{k>n}\frac{|a_k|\mu^{1/2}_k}{t_k}\biggr{)}^2\biggr{]}.
\end{gather*}
Here the constant $C$ depends only on the lacunarity
constant $\inf_nt_{n+1}/ t_n$. Summing up these estimates
and using the estimates from above
for $\sum_n\frac{1}{\mu_n}\Big( \sum_{k<n}
|a_k|\mu^{1/2}_k \Big)^2$ and $\sum_n\frac{t_n^2}{\mu_n}
\Big( \sum_{k>n}\frac{|a_k|\mu^{1/2}_k}{t_k}\Big)^2$
we get
$$
\begin{aligned}
\int_\mathbb{C}|f(z)|^2e^{-\varphi(|z|)}dm(z) &
\geq\sum_n\int_{D_n}|f(z)|^2e^{-\varphi(|z|)}dm(z) \\
& \geq
\varepsilon^4\biggl{[}\frac{1}{\varepsilon^2}\sum_n|a_n|^2-C\sum_n|a_n|^2\biggr{]}.
\end{aligned}
$$
If $\varepsilon$ is sufficiently small, we obtain the result.

Finally, we may redefine $\phi(r)$
for $r\notin\cup_n[(1-\varepsilon)t_n,(1+\varepsilon)t_n]$ to be  finite, but
very large, and still have the comparability of the norms.
\qed
\bigskip


\section{$M$-bases of reproducing kernels with infinite defect}
\label{infdim}

In this section we prove Theorems \ref{mdim} and \ref{indim}. 
It should be emphasized that the construction of
$M$-bases which are not strong in Theorem \ref{mainn} is done in
the reverse order: first we produce a vector in $\he$ with some
special properties and then we construct the corresponding system
$\Lambda$. This method does not answer the question about the size
of the possible defect of a given nonhereditarily complete system.

We will need the following Proposition which slightly extends (and
whose proof simplifies the proof of) \cite[Theorem 1.1]{bb}.

\begin{proposition}
\label{bior}
Let $\he$ be a de Branges space with the spectral data $(T, \mu)$
and let $\sum_n t_n^{2N-2}\mu_n<\infty$ for some $N\in \mathbb{N}$.
Then there exists an exact system
$\{k_\lambda\}_{\lambda\in \Lambda}$ of reproducing kernels in $\he$
such that its generating function $G$ satisfies $|G(iy)| \gtrsim |y|^{-N}|A(iy)|$,
its biorthogonal system is incomplete and
${\rm dim}\, \{g_\lambda: \lambda \in \Lambda\}^\perp = N$.
\end{proposition}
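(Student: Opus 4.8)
The plan is to parametrise the generating function through the numbers $g_n:=G(t_n)/A'(t_n)$, so that $G(z)=A(z)\sum_n g_n/(z-t_n)$ and $G(z)/A(z)=\sum_n g_n/(z-t_n)$ has no constant term. Since $G(\lambda)=0$ and $A(\lambda)\ne0$ for $\lambda\in\Lambda=\mathcal{Z}_G$, we automatically have $\sum_n g_n/(t_n-\lambda)=0$. First I would record the pairing with the biorthogonal system: for $h=A(z)\sum_n \overline{a_n}\mu_n^{1/2}/(z-t_n)\in\he$, the reduction of Subsection~\ref{reduc} gives $(g_\lambda,h)=\pi^2\sum_n a_n g_n\mu_n^{-1/2}(t_n-\lambda)^{-1}$. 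Hence $h\perp\{g_\lambda\}_{\lambda\in\Lambda}$ precisely when the Cauchy transform $\Phi(z)=\sum_n a_n g_n\mu_n^{-1/2}(t_n-z)^{-1}$ vanishes on $\mathcal{Z}_G$, i.e.\ $A\Phi=GV$ for some entire $V$; comparing residues at each $t_n$ then forces $a_n=-\mu_n^{1/2}V(t_n)$.

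Next I would bring in the moments $M_i:=\sum_n t_n^i g_n$. Using $\frac{t_n^j}{t_n-\lambda}=\sum_{i=0}^{j-1}\lambda^{j-1-i}t_n^i+\frac{\lambda^j}{t_n-\lambda}$ together with $\sum_n g_n/(t_n-\lambda)=0$, one computes for the functions $F_j(z)=A(z)\sum_n t_n^j\mu_n/(z-t_n)$ (which lie in $\he$ exactly because $\sum_n t_n^{2N-2}\mu_n<\infty$) that $(g_\lambda,F_j)=\pi^2\sum_{i=0}^{j-1}\lambda^{j-1-i}M_i$ for $\lambda\in\Lambda$. As $\Lambda$ is infinite, $F_j\perp\{g_\lambda\}$ for all $j\le N-1$ iff $M_0=\dots=M_{N-2}=0$. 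The $F_0,\dots,F_{N-1}$ are linearly independent, so once $G$ is built with these vanishing moments the biorthogonal system is incomplete and $\dim\{g_\lambda\}^\perp\ge N$.

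The heart of the argument, and the step I expect to be the main obstacle, is to construct $\{g_n\}$ with $g_n\ne0$, $M_0=\dots=M_{N-2}=0$, $M_{N-1}\ne0$, so that $G=A\sum_n g_n/(z-t_n)$ is the generating function of an \emph{exact} system satisfying $|G(iy)|\gtrsim|y|^{-N}|A(iy)|$. I would require three summability conditions: $\sum_n|t_n|^{N-1}|g_n|<\infty$ (all relevant moments converge absolutely), $\sum_n g_n^2/(\mu_n t_n^2)<\infty$, and $\sum_n g_n^2/\mu_n=\infty$. The last gives $G\notin\he$, and since every nonzero polynomial is bounded below off its finitely many zeros it yields $\sum_n H(t_n)^2 g_n^2/\mu_n=\infty$ for all $H\not\equiv0$; as any $H$ with $HG\in\he$ must be a polynomial of degree $\le N-1$ (the polynomial part of $HG/A$ must vanish, which fails once its degree reaches $N$ because $M_{N-1}\ne0$), this forces completeness of $\{k_\lambda\}$. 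The second condition gives $g_\lambda\in\he$ via \cite[Theorem~26]{br} (minimality) and makes $\Phi$ of Smirnov class. The growth bound comes from the identity $(iy)^N G(iy)/A(iy)=M_{N-1}+\sum_n g_n t_n^N/(iy-t_n)$, whose sum tends to $0$ by dominated convergence with dominant $|g_n||t_n|^{N-1}$, so $|G(iy)/A(iy)|\asymp|y|^{-N}$. The delicate point is the mutual compatibility of these conditions for the \emph{given} data; I would arrange it by first thinning $T$ to a subsequence on which $g_n^2/\mu_n$ is comparable to a divergent harmonic-type series while $|g_n||t_n|^{N-1}$ stays summable, taking the remaining $g_n$ negligibly small and nonzero, and finally solving a Vandermonde system over $N$ auxiliary indices to enforce the moment conditions.

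Finally I would prove $\dim\{g_\lambda\}^\perp\le N$. Given $h\in\{g_\lambda\}^\perp$, the characterisation above yields an entire $V$ with $a_n=-\mu_n^{1/2}V(t_n)$ and $A\Phi=GV$. Both $\Phi$ and $G/A$ are Cauchy transforms whose coefficients are summable against $(1+|t_n|)^{-1}$ (using $\sum_n g_n^2/(\mu_n t_n^2)<\infty$ and Cauchy--Schwarz), hence of Smirnov class in $\cp$ and $\cm$; therefore $V=\Phi/(G/A)$ is entire of bounded type, and of zero exponential type by Krein's theorem. Since $\Phi(iy)\to0$ and $|G(iy)|\gtrsim|y|^{-N}|A(iy)|$, we get $|V(iy)|=o(|y|^N)$, so $V$ is a polynomial of degree $\le N-1$. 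Writing $V=\sum_{j<N}v_j z^j$ gives $h=-\sum_{j<N}\overline{v_j}F_j\in\spa\{F_0,\dots,F_{N-1}\}$, which together with the lower bound yields $\dim\{g_\lambda\}^\perp=N$.
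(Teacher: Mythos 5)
Your proposal is correct and takes essentially the same route as the paper: your vectors $F_j$ are exactly the Riesz representers of the paper's bounded functionals $\phi_l(f)=\sum_n c_n t_n^l\mu_n^{1/2}$, your moment conditions $M_0=\dots=M_{N-2}=0$, $M_{N-1}\neq 0$ correspond to the paper's choice of $\{a_n\}\in\ell^2$ with $\{a_nt_n\}\notin\ell^2$ (the paper merely allows in addition that $\sum_n a_n t_n^N\mu_n^{1/2}$ diverge to $+\infty$, where you insist on absolute convergence), and the upper bound $\dim\{g_\lambda:\lambda\in\Lambda\}^{\perp}\le N$ is obtained in both arguments by parametrizing the orthogonal complement by entire functions and combining the Smirnov-class/Krein zero-type reasoning with $|G(iy)|\gtrsim|y|^{-N}|A(iy)|$ to force a polynomial of degree at most $N-1$. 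The only detail to add — that one may perturb the coefficients slightly so that $G$ has simple zeros — is handled just as briefly in the paper.
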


\begin{proof}
Recall that every function $f\in \he$ has the expansion $f(z) = A(z)\sum_n
\frac{c_n \mu_n^{1/2}}{z-t_n}$, $\{c_n\} \in \ell^2$. Since
$\sum_n t_n^{2N-2}\mu_n<\infty$, the mappings
$\phi_l(f) = \sum_n c_n t_n^l \mu_n^{1/2}$, $0\le\ell<N$,
are bounded linear functionals on $\he$. 
Furthermore, 
$$
\phi_l(f) = 0, \quad l=0, 1, \dots, N-1 \Longleftrightarrow
\lim_{y\to+\infty} y^{l+1} \frac{f(iy)}{A(iy)}=0,
\quad l=0, 1, \dots, N-1.
$$

Let $\{a_n\}\in \ell^2$ be such that $\{a_nt_n\}\notin\ell^2$,
$\sum_n a_n t_n^l\mu_n^{1/2} = 0$ for $l=0, 1, \dots N-1$,
$a_nt_n^N > 0$ for all except a finite number of $n$ and either the series
$\sum_n a_n t_n^N \mu_n^{1/2}$ diverges to $+\infty$ or it converges and
$\sum_n a_n t_n^N \mu_n^{1/2} \ne 0$. Put
$$
G(z) = A(z) \sum_n \frac{a_n t_n \mu_n^{1/2}}{z-t_n}.
$$
Then we have
$$
\lim_{y\to+\infty} y^{l} \frac{G(iy)}{A(iy)}=0,
\quad l= 0, 1, \dots N-1, \qquad
\liminf_{|y|\to +\infty} y^{N} \Bigl|\frac{G(iy)}{A(iy)}\Bigr| > 0.
$$

Since $\{a_n\}\in \ell^2$, but $\{a_nt_n\}\notin\ell^2$, the function $G$
does not belong to $\he$, but $g_\lambda(z) = \frac{G(z)}{z-\lambda} \in \he$.
Without loss of generality (changing slightly the coefficients $a_n$ if necessary)
we may assume that all zeros of $G$ are simple and put $\Lambda = \mathcal{Z}_G$
(note that $\Lambda \cap T = \emptyset$ if $a_n \ne 0$). By construction we have
$\phi_l(g_\lambda) = \lim_{y\to+\infty} y^{l} \frac{G(iy)}{A(iy)}=0$,
$l= 0, 1, \dots N-1$, for any $\lambda\in \Lambda$.
The functionals $\phi_l$ clearly are linearly independent, whence
${\rm dim}\, \{g_\lambda: \lambda \in \Lambda\}^\perp \ge N$.

On the other hand, the set of functions $f(z) =
A(z)\sum_n \frac{a_n \mu_n^{1/2}}{z-t_n}$, $\{a_n\} \in \ell^2$,
which are orthogonal to $\{g_\lambda\}_{\lambda\in \Lambda}$
is parametrized by entire functions $S$ such that
$$
\frac{G(z)S(z)}{A(z)} = \sum_{n}\frac{a_n G(t_n)}{\mu^{1/2}_n A'(t_n)(z-t_n)}
$$
(see Subsection \ref{reduc} or \cite[Section 2]{bb}). Since $S$ is of zero
exponential type (for the generating function of an exact system of reproducing kernels
the functions $G/E$ and $G^*/E$ are in the Smirnov class in $\cp$) and
$|S(iy)| = o(|A(iy)/G(iy)|)$, $|y| \to \infty$, we conclude that $S$ a polynomial
of degree at most $N-1$. Hence,
${\rm dim}\, \{g_\lambda: \lambda \in \Lambda\}^\perp \le N$.
\end{proof}
\medskip

\begin{proof}[Proof of Theorem \ref{mdim}]
1. Let $T^\circ = \{t_{n_k}\}$, $\mu^\circ = \mu|_{T^\circ}$, and let
$\mathcal{H}(E^\circ)$ be the corresponding de Branges space.
By Proposition \ref{bior}
there exists an exact system of
reproducing kernels $\{k^\circ_\lambda\}_{\lambda\in \Lambda^\circ}$
in $\mathcal{H}(E^\circ)$ whose generating function $G^\circ$ satisfies
$|G^\circ(iy)|\gtrsim |y|^{-N}|A^\circ(iy)|$ for some $N$.
Then, by Proposition \ref{submu},
there exists an $M$-basis of reproducing kernels in $\he$
which is not a strong $M$-basis. By Remark  \ref{imp1} the defect
of the partition $\Lambda = \Lambda^\circ \cup \tilde T$
(where $\tilde T = T\setminus T^\circ$) is of dimension exactly $N$.

2. If $T$ is power separated, then $|t_n|\gtrsim |n|^\rho$
for some $\rho>0$. Hence, (ii) implies that for some sequence of indices $\{n_k\}$ and for every $N\in \mathbb N$ we have
$\sum_k t_{n_k}^{2N-2} \mu_{n_k}<\infty$, and the implication (ii)$\Longrightarrow$(i)
is already contained in Statement 1.

To prove the converse, assume that $\mu_n \gtrsim |t_n|^{-M}$
for some $M>0$. By \eqref{use}, the space $\he$
is of tempered growth. Then by \cite[Theorem 5.3]{bbb}
there exists $M=M(N)$ such that
for any exact system of reproducing kernels $\{k_\lambda\}_{\lambda\in \Lambda}$
and for any partition $\Lambda = \Lambda_1\cup \Lambda_2$, $\Lambda_1\cap \Lambda_2=\emptyset$
one has ${\rm def}\,(\Lambda_1, \Lambda_2) \le M$.
\end{proof}


In the proof of Theorem \ref{indim} we use the following 
technical lemma. As usual, for a sequence $\Gamma$, we denote
by $n_\Gamma$ its counting function: $n_\Gamma(r) = 
\#\{\gamma \in \Gamma: |\gamma| <r\}$. For an entire function $f$ we write $n_f$
in place of $n_{\mathcal{Z}(f)}$. 

\begin{lemma}
\label{newl}
Let $\Gamma$ be a lacunary sequence on $\mathbb R_+$ and let $f$
be an entire function of zero exponential type such that 
\begin{equation}
\label{qn1}
\int_0^R\frac{n_f(r)}{r}dr = o\Bigl(\int_0^R  \frac{n_\Gamma(r)}{r}dr\Bigr),\qquad  R\to \infty
\end{equation} 
and $f\in\ell^\infty(\Gamma)$. Then $f$ is a constant. 
\end{lemma}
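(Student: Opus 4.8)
The plan is to argue by contradiction: I assume $f$ is non-constant and show it must then be unbounded on $\Gamma$. The first step is to extract from the hypotheses strong information on the zeros $\{a_k\}=\mathcal Z_f$. Since $\Gamma$ is lacunary we have $n_\Gamma(r)\lesssim\log r$, so $\int_0^R\frac{n_\Gamma(r)}{r}\,dr\lesssim(\log R)^2$ and \eqref{qn1} forces $N_f(R):=\int_0^R\frac{n_f(r)}{r}\,dr=o((\log R)^2)$; by monotonicity of $n_f$ this yields the pointwise bound $n_f(r)=o(\log r)$. In particular $\sum_k|a_k|^{-1}<\infty$, so, combined with the zero exponential type, $f$ has order zero and the genus-zero Hadamard factorization $f(z)=c\,z^m\prod_k(1-z/a_k)$. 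If $f$ is non-constant it must have a zero (either $m\ge1$ or some $a_k\ne 0$), whence $N_f(R)\to\infty$.

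The heart of the matter is a pointwise lower bound for $|f|$ at the points of $\Gamma$ that lie far from the zeros. Call $\gamma_n$ \emph{good} if $f$ has no zero in the annulus $\{\gamma_n/2<|z|<2\gamma_n\}$. For a good $\gamma_n$ I would estimate $\log|f(\gamma_n)|=\log|c|+\sum_k\log|1-\gamma_n/a_k|$ by splitting the zeros into those with $|a_k|\le\gamma_n/2$ and those with $|a_k|\ge2\gamma_n$ (there are none in between). A direct computation using the identity $\int_0^s\log(s/t)\,dn_f(t)=N_f(s)$ shows that the near group contributes at least $N_f(\gamma_n/2)$, while the far group contributes at least $-2\gamma_n\sum_{|a_k|\ge2\gamma_n}|a_k|^{-1}$, which after an integration by parts and the bound $n_f(t)=o(\log t)$ is only $o(\log\gamma_n)$. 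Since $N_f(\gamma_n/2)\ge\log(\gamma_n/2|a_1|)\asymp\log\gamma_n$ for non-constant $f$, this gives $\log|f(\gamma_n)|\ge(1-o(1))\log\gamma_n\to\infty$ along the good points, contradicting $f\in\ell^\infty(\Gamma)$.

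It remains to produce infinitely many good points, and this is where I expect the main difficulty and where \eqref{qn1} is used in an essential way. The obstruction would be that the zeros of $f$ cluster so as to occupy the annulus of every $\gamma_n$; but each zero $a_k$ lies in the annulus of at most $\kappa=O(1)$ points $\gamma_n$ (a ratio-$4$ interval contains boundedly many lacunary points), so the number of bad indices $n\le M$ is at most $\kappa\,n_f(2\gamma_M)$. If good points were only finitely many we would have $\kappa\,n_f(2\gamma_m)\gtrsim m=n_\Gamma(\gamma_m)$ for all large $m$; feeding this into $N_f(2\gamma_M)\ge\sum_m\int_{2\gamma_m}^{2\gamma_{m+1}}\frac{n_f(r)}{r}\,dr\gtrsim\sum_m m\log(\gamma_{m+1}/\gamma_m)\asymp N_\Gamma(\gamma_M)$ would contradict \eqref{qn1}. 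Hence infinitely many good $\gamma_n$ exist and the argument closes. The delicate points to watch are the uniformity of the $o(\cdot)$ in the far-zero term and the bookkeeping relating $n_f$ and $n_\Gamma$ at the comparable scales $\gamma_M$ and $2\gamma_M$; the lacunarity of $\Gamma$ (boundedly many points per ratio-$4$ interval) is exactly what makes both manageable.
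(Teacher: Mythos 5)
Your proof is correct, and its skeleton coincides with the paper's: argue by contradiction, obtain a multiplicative lower bound for $|f|$ at those points of $\Gamma$ that are separated from $\mathcal{Z}_f$, and then show by a counting argument that if all but finitely many points of $\Gamma$ were bad, then $n_f$ would be so large that $\int_0^R n_f(r)r^{-1}\,dr \gtrsim \int_0^R n_\Gamma(r)r^{-1}\,dr$, contradicting \eqref{qn1}. The genuine difference lies in how the distant zeros are tamed. You first extract the pointwise bound $n_f(r)=o(\log r)$ from \eqref{qn1} combined with the lacunarity bound $n_\Gamma(r)\lesssim \log r$ (via $n_f(R)\log R\le \int_R^{R^2}n_f(r)r^{-1}\,dr=o((\log R)^2)$); the paper never establishes any pointwise bound on $n_f$. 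This buys you a purely local notion of good point ($f$ zero-free in the ratio-$4$ annulus around $\gamma_n$), since the far-field contribution $2\gamma_n\sum_{|a_k|\ge 2\gamma_n}|a_k|^{-1}=o(\log\gamma_n)$ then comes for free, with uniformity from $n_f(t)\le\eps\log t$, $t\ge T_\eps$. The paper instead builds the tail control into the definition of the bad set (the condition $\sum_{n>1}e^{-n}p_{k+n}\ge 1$ on the shell counts $p_j$) and must carry these exponentially weighted sums through the final integration step, after first thinning $\Gamma$ so that each shell carries at most one point. Your divergence mechanism is also slightly different: via the genus-zero Hadamard product you get $\log|f(\gamma_n)|\ge N_f(\gamma_n/2)-o(\log\gamma_n)\ge (1-o(1))\log\gamma_n$ along good points, which automatically covers the case of finitely many zeros (one zero already gives $N_f(\gamma_n/2)\gtrsim\log\gamma_n$), whereas the paper's bound $|f(x)|\ge \exp\bigl[n_f(e^{2(k-1)})-O(1)\bigr]$ implicitly needs $n_f\to\infty$, the polynomial case being trivial. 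The delicate points you flag do close: in the final bookkeeping the $O(1)$ corrections and the passage between scales $\gamma_M$ and $2\gamma_M$ are absorbed because $\log\gamma_M=o\bigl(\int_0^{\gamma_M}n_\Gamma(r)r^{-1}\,dr\bigr)$ and $\int_0^{2\gamma_M}n_\Gamma(r)r^{-1}\,dr=(1+o(1))\int_0^{\gamma_M}n_\Gamma(r)r^{-1}\,dr$ --- the same facts the paper invokes as $x=o(U(x))$ and $U(k+n)\le U(k)+(2k+n)n$.
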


\begin{proof} Let $\Gamma = \{x_n\}_{n\in \mathbb{N}}$ and $x_{n+1}/x_n \ge e^\gamma >1$, $n\ge 1$. Passing to a subsequence, we can assume that $x_{n+1}/x_n \ge e^2$, $x_1\ge e^2$  
and \eqref{qn1} is still valid.
Set
$$
\Omega_n = \big\{z: e^{2n} \le |z| < e^{2(n+1)} \big\},
\qquad p_n = \#(\mathcal{Z}_f \cap \Omega_n),
$$
and
$$
\mathcal{N} = \big\{k\ge 1: \sum_{n>1}e^{-n}p_{k+n}\ge 1\big\}\cup
\big\{ k\ge 2: p_{k-1} +p_k + p_{k+1} >0\big\}.
$$
Finally, let
$$
U(x) = \int_0^{e^{2x}} \frac{n_\Gamma(r)}{r}dr=\int_0^{2x} n_\Gamma(e^s)\,ds.
$$
Since $n_\Gamma(r) \to \infty$, $r\to \infty$, and 
$\#(\Gamma\cap \Omega_k) \le 1$, we have
$x = o(U(x))$,  
$x\to \infty$.

Given $x\in \Gamma\cap \Omega_k$, $k\notin \mathcal{N}$, we have for some $c<\infty$ 
$$
\begin{aligned}
|f(x)| & = \prod_{z_n \in \mathcal{Z}_f} \bigg|1-\frac{x}{z_n}\bigg| \ge
\prod_{|z_n| < e^{2(k-1)}} \bigg(\frac{e^{2k}}{|z_n|} -1 \bigg)
\cdot
\prod_{|z_n| \ge e^{2(k+2)}}\bigg(1- \frac{e^{2(k+1)}}{|z_n|}\bigg) \\
& \ge \exp\Big[ \#\big\{z_n: |z_n| < e^{2(k-1)}\big\}
- \sum_{n>1} e^{-(n-1)}p_{k+n}  \Big] \to \infty, \qquad x\to \infty.
\end{aligned}
$$
(We use here that $1-a^2>\exp(-a)$ for $0<a<1/e$). Thus, the set $\Gamma\setminus 
\cup_{k\in \mathcal{N}} \Omega_k$  is finite. 
Hence, 
$$
\begin{aligned}
n_\Gamma(e^{2(k+1)}) & \le 
\#\{l: 2\le l\le k, \ p_{l-1}+p_l+p_{l+1} >0 \}  \\
& + \#\{l: 1\le l\le k, \ \sum_{n>1} e^{-n}p_{l+n} \ge 1\} +O(1) \\
& \le
n_f\big(e^{2k}\big) + n_f\big(e^{2(k+1)}\big) +
n_f\big(e^{2(k+2)}\big) + \sum_{1\le l\le k} \sum_{n>1} e^{-n}p_{l+n} +O(1)\\
& \le 3n_f\big(e^{2(k+2)}\big) +\sum_{n>1} e^{-n}n_f
\big(e^{2(k+n+1)}e^{-n}\big) +O(1), \qquad k\to\infty.
\end{aligned}
$$
Set
$$
V(x) = \int_0^{e^{2x}} \frac{n_f(r)}{r}dr=\int_0^{2x} n_f(e^s)\,ds+O(1),\qquad x\to\infty.
$$
By the hypothesis, $V(x) = o(U(x))$, $x\to \infty$.
Then, integrating the previous inequality, we get 
$$
U(k) \le 3V(k+2) +\sum_{n>1}e^{-n}V(k+n+1) +O(1),\qquad k\to\infty.
$$
Since $n_\Gamma(r) \le \frac 12 \ln r$ for large $r$, we have
$$
U(k+n) =U(k) + \int_{2k}^{2(k+n)} 
n_\Gamma(e^r)\,dr \le U(k) + (2k+n)n.
$$
Since for any $\vep>0$ we have $V(k+n)\le \vep U(k+n)$ 
for sufficiently large $k$, we conclude that
$$
U(k) \le 4\vep U(k) +c\vep k  +O(1),\qquad k\to\infty,
$$
for an absolute constant $c$.
Recall that $\vep$ is an arbitrary positive number and $k=o(U(k))$, $k\to \infty$.
Thus, we come to a contradiction.
\end{proof}

\begin{proof}[Proof of Theorem \ref{indim}]
We split the sequence $T$ into three disjoint parts $T=T^0\cup T^1\cup T^2$
with the following properties:
\smallskip

(i) Both $T^0$ and $T^1$ are positive lacunary sequences.
\smallskip

(ii) $\big[\frac{t_n}{2}, 2t_n\big] \cap T^1 = \emptyset$ for any $t_n \in T^0$,
and $\big[\frac{t_n}{2}, 2t_n\big] \cap T^0 = \emptyset$ for any $t_n \in T^1$.
\smallskip

Additionally assume that 
$$
({\rm iii})\qquad n_{T^0}(r)= o(n_{T^1}(r)),\, r\to\infty.
$$


The proof will consist of several steps. We will successively define the measure
$\mu$ on the sets $T^0$, $T^1$ and $T^2$.
\medskip


{\bf Step 1. Construction of a complete system of reproducing
kernels with incomplete biorthogonal of infinite defect.} To
define the measure $\mu$ on $T^0$, we may apply to $T^0$ the
following result proved in \cite[Theorem 4.2]{bd} (see the
beginning of the proof of Theorem 4.2 in \cite{bd}, where a 
reformulation in terms of the systems of reproducing kernels is
given):
\smallskip

{\it For any increasing sequence $T^0\subset \rl$
there exists a measure $\mu^0 = \sum_{t_n\in T^0} \mu_n\delta_{t_n}$
such that the de Branges space $\mathcal{H}(E^0)$, $E^0=A^0-iB^0$,
with the spectral data $(T^0, \mu^0)$ contains an exact system
of reproducing kernels with the generating function $G^0$,
whose biorthogonal system has
infinite-dimensional orthogonal complement.}
\smallskip

In \cite[Theorem 4.2]{bd} it is essential that the space $\mathcal{H}(E^0)$ had
some additional properties, and its proof is rather involved. Below (Step 2)
we present an explicit construction of $\mu^0$ and $G^0$ using a simplified version
of the construction from \cite[Theorem 4.2]{bd}.

It follows both from the construction in \cite{bd} and from our arguments
in Step 2 that $\mu_n \le 1$, $t_n\in T^0$, and, furthermore, that
\begin{equation}
\label{arb3}
\bigg|\frac{A^0(z)}{G^0(z)}\bigg| \le C_2|W(z)|
\bigg(\frac{|z|^2+1}{|\ima z|}\bigg)^2,
\qquad z \notin\RR,
\end{equation}
for some entire function $W$ of zero exponential type and with real zeros, such that
$n_W(r) = o(n_{T^0}(r))$, $r\to \infty$ (see \cite[inequality (4.4)]{bd}).
\medskip


{\bf Step 2. An explicit construction of $\mu^0$ and $G^0$.}
Let $A^0$ be the canonical product of zero genus with the zero set $T^0$. 
Then we may choose
a zero genus canonical product $D$ with lacunary real zeros such that 
$\mathcal Z_D\cap T^0=\emptyset$, for any $N>0$,
$$
|D(t_n)|\lesssim |t_n|^{-N}|(A^0)'(t_n)|, \qquad t_n \in T^0,
$$
and $|D(iy)| \lesssim |y|^{-N}|A^0(iy)|$ on one hand, and
$$
|A^0(z)/D(z)|\lesssim |W(z)|, \qquad {\rm dist}\, (z, \mathcal{Z}_D)\ge 1,
$$
where $W$ is an entire function satisfying
$n_W(r) =o(n_{T^0}(r))$, $r\to \infty$, on the other hand.
Put
\begin{gather*}
G^0(z)=A^0(z)\sum_{t_n\in T^0}\frac{t_nD(t_n)}{(A^0)'(t_n)(z-t_n)}=zD(z),\qquad z\in\mathbb C,\\
\mu_n=\frac{n^2 |D(t_n)|^2}{|(A^0)'(t_n)|^2}\lesssim 1, \qquad t_n \in T^0,
\end{gather*}
and define the de Branges space $\mathcal{H}(E^0)$, $E^0=A^0-iB^0$,
with the spectral data $(T^0, \mu^0)$, $\mu^0=(\mu_n)$.
Then it is easy to see that
$G^0\notin \mathcal{H}(E^0)$, $g^0_\lambda=G^0/(\cdot-\lambda)\in
\mathcal{H}(E^0)$, $\lambda\in\Lambda^0$, where $\Lambda^0$ is the zero set of $G^0$.
Furthermore, ${\rm dim}\, \{g^0_\lambda: \lambda \in \Lambda^0\}^\perp =\infty$
because
for the bounded functionals $\psi_l(f) = \lim_{y\to+\infty} y^{l+1} f(iy)/A^0(iy)$,
$l\ge 0$ (see the proof of Proposition~\ref{bior}), on $\mathcal{H}(E^0)$ 
we have $\psi_l(g^0_\lambda) = 0$,
$\lambda\in\Lambda$, by the choice of the function $D$.

It remains to prove that $G^0$ is the generating function
of a complete system of reproducing kernels in $\mathcal{H}(E^0)$.
If it is not the case, then there exists an entire function $H$
such that $HG^0\in\mathcal{H}(E^0)$. Hence, $H$
is of zero exponential type, its growth is majorized by that of $A^0/D$ away from
the zeros of $A^0$ and $D$, and, thus, by the growth of the entire function $W$, 
i.e., $n_{H}(r)=  O(n_{W}(r)) = o(n_{T^0}(r))$, $r\to \infty$.
Finally,
$$
\sum_{t_n\in T^0}\frac{|t_nD(t_n)H(t_n)|^2}{\mu_n |(A^0)'(t_n)|^2}=
\sum_{t_n\in T^0}\frac{t^2_n|H(t_n)|^2}{n^2}<\infty,
$$
whence, $H(t_n)\to 0$, $t_n \in T^0$, $n\to\infty$.
Applying Lemma \ref{newl} to $\Gamma = T^0$ and $f=H$ 
we conclude that $H\equiv 0$.
\medskip


{\bf Step 3. Defining the measure $\mu$  on $T^1$.}
Let $\mu^1$ be defined on $T^1$ by $\mu_n =1$, $t_n \in T^1$. Consider the de Branges space $\mathcal{H}(\tilde E)$
with the spectral data $(T^0\cup T^1, \mu^0+\mu^1)$.
Let $(\tilde T, \tilde \mu)$ be some other spectral data
for $\mathcal{H}(\tilde E)$ sufficiently close to $(T^0\cup T^1, \mu^0+\mu^1)$.
We claim that
$$
\tilde \mu\big([t_n -1, t_n + 1]\big)
\gtrsim 1, \qquad t_n \in T^1.
$$
Indeed, let $t_n\in T^1$. It follows from simple estimates 
of the derivative of the inner function
$\Theta_{\tilde E}$ based on formula \eqref{use}
that $|\Theta_{\tilde E}'(t)| \asymp 1 $, $|t-t_n| \le 2$.
Hence, if the spectral data
$(\tilde T, \tilde \mu)$ correspond to some $\alpha$ sufficiently close to $-1$
(recall that we assume canonically that $(T^0\cup T^1, \mu^0+\mu^1)$ correspond
to $\alpha= -1$), then there exists a point $\tilde t_n \in \tilde T$
in the interval $[t_n - 1, t_n+1]$ such that $|\Theta'_{\tilde E}(\tilde t_n)| \asymp 1$.
\medskip


{\bf Step 4. Defining the measure $\mu$  on $T^2$.} We can
choose the spectral data $(\tilde T, \tilde \mu)$ in Step 3 so
that additionally $\tilde T \cap T^2 = \emptyset$. Now we take $\mu_n$ for
$t_n\in T^2$ to be very small positive numbers. Namely, if we
denote by $\he$ the de Branges space with the spectral data $(T,
\mu)$ and by $\Theta_E$ the corresponding inner function, we need
to choose $\mu_n$ for $t_n\in T^2$ so small that for some other
spectral data $(U, \nu)$ for $\he$ 
there exists a point $u_n \in [\tilde t_n-1, \tilde t_n+1] \cap U$
with $|\Theta_E'(u_n)| \asymp 1$. This is clearly possible since
adding to $\mu$ a small point mass at some point $x$ perturbs very slightly the
solutions of the equation $\Theta_E(t) = \beta \in \mathbb{T}$ and
the derivative $|\Theta_E'(t)|$ outside a small neighborhood of $x$.

Thus, we have constructed a de Branges space $\he$
with the spectral data $(T, \mu)$ such
that for some other spectral data $(U, \nu)$ for $\he$
we have $\nu([t_n-2, t_n+2]) \gtrsim 1$, $t_n\in T^1$
(recall that $\nu(\{u_n\}) = 2\pi/|\Theta_E'(u_n)|\asymp 1$).
If $E=A-iB$, then we can write $A=A^0 A^1 A^2$, where $A^0$, $A^1$
are canonical products of zero genus with the zero sets $T^0$ and $T^1$, respectively, and
$A^2$ is some entire functions with the zero set $T^2$.
\medskip


{\bf Step 5. Construction of the set $\Lambda$.}
By construction, the space $\mathcal{H}(E^0)$
contains an exact system of reproducing kernels
$\{k^0_\lambda\}_{\lambda\in \Lambda^0}$
with the generating function $G^0$
whose biorthogonal system has infinite-dimensional orthogonal complement,
and, thus, there exists an infinite-dimensional subspace of vectors
$a^0 = \{a_n^0\} \in \ell^2$ such that
$$
\frac{G_1^0(z)S^0_1(z)}{A^0(z)}  =
\sum_{t_n\in T^0} \frac{a^0_n G^0(t_n)}{\mu^\half_n (A^0)'(t_n)(z-t_n)},
\qquad
\frac{S^0_2(z)}{A^0(z)}   =
\sum_{t_n\in T^0}\frac{\overline{a_n^0}\mu^\half_n}{z-t_n}
$$
for some entire functions $S^0_1$ and $S^0_2$ depending on $a^0$.
Multiplying these equations
by $A^1 A^2$ as in the proof of Proposition \ref{submu},
we conclude that the system $\{k_\lambda\}_{\lambda\in T^1\cup T^2}
\cup\{g_\lambda\}_{\lambda\in \Lambda^0}$
has an infinite-dimensional orthogonal complement in $\he$.
Put $\Lambda = \Lambda^0 \cup T^1 \cup T^2$.
It is clear that the system $\{k_\lambda\}_{\lambda\in \Lambda}$
is an exact system of reproducing kernels in $\he$
(see Remark \ref{imp2}) and $G = G^0 A^1 A^2$ is its generating function.
\medskip


{\bf Step 6. Completeness of the biorthogonal system.}
It remains to show that the system $\{G/(\cdot-\lambda)\}_{\lambda\in \Lambda}$
biorthogonal to $\{k_\lambda\}_{\lambda\in \Lambda}$
is complete in $\he$. We cannot apply Proposition \ref{submu}
since we do not have the condition
$|G^0(iy)|\gtrsim |y|^{-N}|A^0(iy)|$ for some $N>0$
(it is a crucial property of the construction in \cite{bd}
or in Step 2 that $G^0/A^0$ decreases super-polynomially along the imaginary axis).
We apply another argument from \cite{bd}.

Let $(U, \nu)$, $U=\{u_n\}$, be the spectral data for $\he$
constructed above. They correspond to the function $E_\alpha =
\alpha E-E^*$ for some $\alpha\in \tz$, $\alpha \ne -1$. By the
arguments from Subsection \ref{reduc}, the system
$\{G/(\cdot-\lambda)\}_{\lambda\in \Lambda}$ is incomplete in
$\he$ if and only if there exists a sequence $\{c_n\} \in \ell^2$
and a nonzero entire function $V$ such that
$$
\frac{G(z)V(z)}{E_\alpha(z)} = \sum_{n}\frac{c_n G(u_n)}
{\nu^\half_n E_\alpha'(u_n)(z-u_n)},
$$
where $V(u_n) = \nu_n^{-1/2} c_n$ and $V\in L^2(\nu)$.
Since
$$
\frac{G}{E_\alpha} = \frac{G^0}{A^0}\cdot \frac{A}{E_\alpha} =
\frac{G^0}{A^0}\cdot \frac{1+\Theta_E}{\Theta_E-\alpha}
$$
and $A^0/G^0$ satisfies \eqref{arb3},
we conclude that $|V(z)|\lesssim
(|z|+1)^N |W(z)|/|\ima z|^M$, for some
$M, N>0$.
Here we used the fact that $1-|\Theta_E(z)| \ge (1+|z|^2)^{-1} \ima z$, 
$z\in \mathbb{C}_+$. 
Now it follows from standard estimates
based on the Jensen formula (see, e.g., \cite[Section 4]{bd} for details)
that the counting function $n_V$ satisfies
$$
\int_0^R  \frac{n_V(r)}{r}dr =
o\bigg(\int_0^R  \frac{n_{W}(r)}{r}dr\bigg), \qquad R\to \infty.
$$
Since $n_W(r) = o(n_{T^0}(r))$ and 
$T^0$ is lacunary, we conclude, using (iii), that 
$n_V(r) = o(n_{T^1}(r))$, $r\to\infty$. 

Also, since $V\in L^2(\nu)$ and
$\nu([t_n -2, t_n +2]) \gtrsim 1$, $t_n\in T^1$,
we have $\inf_{[t_n -2, t_n+2]} |V| \lesssim 1$. Thus, 
by Lemma \ref{newl}, $V$ is a constant,
and, finally, $V\equiv 0$, since $\nu(\rl) = \infty$. This proves the
completeness of the biorthogonal system.
\end{proof}
\bigskip


\end{document}